\title{\normalsize{\textbf{LINKS OF SINGULARITIES OF INNER NON-DEGENERATE MIXED FUNCTIONS}}}%DEFORMATIONS OF REAL POLYNOMIALS MAPPINGS $(\R^4,0)\to (\R^2,0)$
\author{Raimundo N. Araújo dos Santos  \hspace{0.1cm} Benjamin Bode \and Eder L. Sanchez Quiceno}
\newcommand{\Addresses}{{% additional braces for segregating \footnotesize
  \bigskip
  \footnotesize

R.~N.~Araújo dos Santos, \textsc{Institute of Mathematics and Computer Science (ICMC)\\
University of São Paulo (USP), Avenida Trabalhador São-Carlense, 400 - Centro\\
CEP: 13566-590 - São Carlos - SP, Brazil}\par\nopagebreak
  \textit{E-mail address}: \texttt{rnonato@icmc.usp.br}

  \medskip

B.~Bode, \textsc{Instituto de Ciencias Matemáticas (ICMAT), Consejo Superior de Investigaciones Científicas (CSIC), Campus Cantoblanco UAM,\\
C/ Nicolás Cabrera, 13-15, 28049 Madrid, Spain}\par\nopagebreak
  \textit{E-mail address}: \texttt{benjamin.bode@icmat.es}

  \medskip

E.~L.~Sanchez Quiceno, \textsc{Institute of Mathematics and Computer Science (ICMC)\\
University of São Paulo (USP), Avenida Trabalhador São-Carlense, 400 - Centro\\
CEP: 13566-590 - São Carlos - SP, Brazil}\par\nopagebreak
  \textit{E-mail address}: \texttt{ederleansanchez@usp.br}

}}
\date{}
        \titleformat{\subsection} [block] {\large \mdseries}
        {\thesubsection} {1ex} {}
        [
        ] 
\newtheorem{teo}{Theorem}[section]
\newtheorem{corolario}[teo]{Corollary}
\newtheorem{prop}[teo]{Proposition}
\newtheorem{lemma}[teo]{Lemma}
\newtheorem{definition}[teo]{Definition}
\newtheorem{ex}[teo]{Example}
\newtheorem{obs}[teo]{Remark}
\newcommand{\C}{\mathbb{C}}       %%%%%%% complexos
\newcommand{\R}{\mathbb{R}}       %%%%%%% Reais
\newcommand{\N}{\mathbb{N}}       %%%%%%% Naturais
\newcommand{\rme}{\mathrm{e}}
\newcommand{\rmi}{\mathrm{i}}
\newcommand{\defeq}{\mathrel{\mathop:}=}
\begin{document}
\maketitle
%\subjclass[2020]{Primary 39B82; Secondary 44B20, 46C05.}
%\subjclass[2000]{Primary 60-01, 62-01; Secondary 05-01.}
\begin{abstract}
We introduce the notion of a (strongly) inner non-degenerate mixed function $f:\mathbb{C}^2\to\mathbb{C}$. We show that inner non-degenerate mixed polynomials have weakly isolated singularities and strongly inner non-degenerate mixed polynomials have isolated singularities. Furthermore, under one additional assumption, which we call ``niceness'', the links of these singularities can be completely characterized in terms of the Newton boundary of $f$. In particular, adding terms above the Newton boundary does not affect the topology of the link. This allows us to define an infinite family of real algebraic links in the 3-sphere.
    \vspace{0.2cm} 

\noindent \textit{Keywords:} real algebraic links, mixed singularities, semiholomorphic function, isolated singularity, Newton non-degenerate, P-fibered braid, Newton boundary.  
   
     \vspace{0.2cm} 
     
\noindent \textit{Mathematics Subject Classification:} 
Primary 32S55, 57K10; Secondary 14M25, 14P05, 14P15, 14P25, 32S05. 	
%\makeatother
      %\noindent \textit{Keywords:} isolated singularity, real algebraic links, mixed singularity, Newton non-degeneracy, closure of braids.  
   
\end{abstract}
\section{INTRODUCTION}
It is a key objective in singularity theory to obtain information about geometric, topological or analytical properties of a (algebraic) variety in a neighbourhood of a singular point by studying the defining set of (polynomial) equations. A classical example of this occurs in the study of complex plane curves, the zeros of a complex polynomial $f:\mathbb{C}^2\to\mathbb{C}$, where we can obtain the Puiseux expansion of each branch via the Newton polygon of $f.$ See for instance \cite{brieskorn}. The Puiseux pairs then allow a complete characterization of algebraic curves and their singularities from a topological viewpoint (cf. for example  \cite{eisenbud}). Analogous results does not hold  in general over fields that are not algebraically closed, such as the real numbers, since there might not even exists a Puiseux expansion for every branch of the curve. In this paper we show that under some assumptions we can nonetheless completely describe the topology of a real algebraic variety near a singularity by studying the boundary of an appropriately defined Newton polygon.

We consider polynomial maps $f:\mathbb{R}^4\to\mathbb{R}^2$ and their singularities, i.e., points where their Jacobian matrix does not have full rank. We say that the origin $O$ is a \textit{weakly isolated singularity} if it is a singularity, i.e. $O\in \Sigma_{f}\cap V_f,$ and there is an open neighbourhood $U\subset\mathbb{R}^4$ of the origin such that $U\cap V_f\cap\Sigma_f=\{O\}$, where $V_f:=\{f=0\}$ is the variety defined by $f$ and $\Sigma_f:=\{x\in U: \text{rank J}f(x)\leq 1\}$ is the set of critical points (or, the singular locus) of $f.$ 

We say that $O$ is an \textit{isolated singularity} if it is a singularity and there is a neighbourhood $U$ of the origin such that $U\cap\Sigma_f=\{O\}.$

Naturally, holomorphic polynomials functions form a special case of this family of real polynomial mappings and indeed this real setting has several similarities with the complex case. Intersecting $V_f$ with a 3-sphere of a small enough radius centered at the singular point, it results in a smooth link whose isotopy type is independent of the chosen radius. In contrast to the local topological classification of the singularities of complex plane curves, the links that can arise as the links of isolated singularities of real polynomial maps, called \textit{real algebraic links}, are not classified yet. 

Milnor proved in \cite{Milnor1968} that the set of real algebraic links is a subset of the set of fibered links and it was conjectured by Benedetti-Shiota in \cite{Benedetti_Shiota1998} that these two sets are identical. In general, it is well-known that every link arises as the link of a weakly isolated singularities as proved by Akbulut and King in \cite{Akbulut_King1981}. However, besides those links that come from holomorphic polynomials from $\C^2\to\C$ the current literature only offers relatively few examples of fibered links that are known to admit realizations as real algebraic links \cite{Bode2019, bodesat, bodebraid, Looijenga1971, Perron1982, Pichon2005, Rudolph1987}. As a consequence of our main results we can add an infinite family to this list of examples.

%However, beside those links that comes from holomorphic polynomials from $\mathbb{C}^2\to\mathbb{C}$ in the current literature only few fibered links are known as admitting a realization as real algebraic links. As a consequence of {\color{red}  our main results this latter} class will be enlarged. 

Following Oka \cite{Oka2010} we write real polynomials $f:\mathbb{R}^4\to\mathbb{R}^2$ as \textit{mixed functions}, that is, as polynomials $f:\mathbb{C}^2\to\mathbb{C}$ in complex variables $u$ and $v$ and their complex conjugates $\overline{u}$ and $\overline{v}$. Mixed functions are a natural generalization of holomorphic functions as will be shown in the Section~\ref{sec:prem}. Working with mixed functions allows us to find analogues of tools that have proven useful in the complex setting. In \cite{Oka2010} Oka also defines the \textit{Newton polygon} $\Gamma_+(f)$ for any such mixed polynomial function, which in the case that it does not depend on the variables $\overline{u}$ and $\overline{v}$ naturally reduces to the usual definition of the Newton polygon of a complex plane curve. Using this definition he finds conditions on the boundary of the Newton polygon, called \textit{(strong) non-degeneracy} and \textit{convenience} that, if satisfied simultaneously, guarantee that the mixed function has a weakly isolated singularity (or, an isolated singularity) at the origin. 

Similarly, in the case of complex plane curves Mondal's and Wall's conditions of being partially non-degenerate \cite{mondal} or inner non-degenerate \cite{wall} (which in this case are equivalent) imply an isolated singularity \cite{stevens}. Although these definitions have been generalized to fields with non-zero characteristic \cite{boubakri}, the concepts and results in general do not translate to fields that are not algebraically closed.

Motivated by the holomorphic setting we introduce in this paper what it means for a mixed polynomial to be (strongly) inner non-degenerate. For holomorphic functions our definition of being inner non-degenerate matches with those of \cite{mondal} and \cite{wall}. The set of (strongly) inner non-degenerate mixed polynomials contains all convenient (strongly) non-degenerate polynomials and we show that inner non-degenerate mixed functions have weakly isolated singularities and strongly inner non-degenerate mixed functions have isolated singularities, even if they are not necessarily convenient. Our main result is that under our new inner non-degeneracy condition and some small additional assumption (``being \textit{nice}'') the links of the resulting singularities are completely determined by the terms on the boundary of the Newton polygon, as follows.
%The characterization of the topology of algebraic curves is a beautiful and well known result, which use over other things the Puiseux series of each irreducible factor and how these pieces are putting together. In some sense this description is close related to the Newton polygon of each irreducible factor of the algebraic curve. We can see that algebraic curves are particular case of the varieties of mixed polynomials $f:(\C^2,0)\to (\C,0)$, i.e., complex polynomials in variables $u,v$ and their complex conjugates $\bar{u},\bar{v}$, and it is nature to ask if the variety of mixed polynomials and their topology can be described, as the algebraic curve case, in terms of some small pieces. Over a nice Newton non-degenerate condition on the boundary of the Newton polygon (Definitions~\ref{Newtoncond} and~\ref{def:blabla}), we prove that the topology of mixed varieties can be described as $L(L_1,L_2,\ldots,L_N)$ (figure~\ref{fig:braids}) or  $L^o(L_1,L_2,\ldots,L_N)$ if $f$ is not $u$-convenient, where each $L_i$ is related to the 1-face $\Delta(f,P_i)$, $P_i \in \mathcal{P}$, the set of weight vectors associated to 1-faces.      
\begin{teo}
\label{thm:main}
Let $f:\mathbb{C}^2\to\mathbb{C}$ be a mixed polynomial with a nice, inner non-degenerate boundary. Then the link of the singularity is ambient isotopic to $L([L_1,L_2,\ldots,L_{N-1}],L_N)$.
\end{teo}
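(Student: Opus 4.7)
The plan is to exploit a decomposition of a small $3$-sphere $S_\varepsilon^3$ around the origin into sectors indexed by the compact faces of the Newton boundary of $f$, and to show that on each sector the link of $V_f$ is controlled by the face function associated to that face. Label the compact edges of the Newton boundary by $\Delta_1,\ldots,\Delta_{N-1}$ and let $\Delta_N$ be the outermost distinguished face (whichever convention the authors fix in the body of the paper). To each edge $\Delta_i$ assign the face function $f_{\Delta_i}$ obtained by keeping only those monomials of $f$ lying on $\Delta_i$; by construction $f_{\Delta_i}$ is weighted homogeneous with respect to the primitive integer normal $(p_i,q_i)$ to $\Delta_i$, so its link is directly computable and, I expect, is exactly the $L_i$ appearing in the statement.

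The first substantive step is to produce the sector decomposition. For each edge $\Delta_i$ I would define a sector $S_i\subset \mathbb{C}^2\setminus\{0\}$ consisting of points $(u,v)$ whose moduli $(|u|,|v|)$ lie in a cone adapted to $(p_i,q_i)$. A standard weighted rescaling argument, using that $f_{\Delta_i}$ is $(p_i,q_i)$-weighted homogeneous, then shows that on $S_i\cap S_\varepsilon^3$, in the limit $\varepsilon\to 0$, the map $f$ is a small perturbation of $f_{\Delta_i}$. Here the inner non-degeneracy hypothesis enters in an essential way: it guarantees that $f_{\Delta_i}$ has no critical points in the interior of $S_i$ (outside of the coordinate axes), so $V_{f_{\Delta_i}}\cap S_i\cap S_\varepsilon^3$ is a smooth submanifold and the perturbation argument can be promoted to an ambient isotopy via Ehresmann's fibration theorem applied to a radial vector field adapted to the weight $(p_i,q_i)$.

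Next I would analyse the interface tori $T_i=\overline{S_i}\cap \overline{S_{i+1}}$. Here the niceness condition should be designed exactly so that $V_f$ meets each $T_i$ transversely along a collection of circles that match those produced on either side by $f_{\Delta_i}$ and $f_{\Delta_{i+1}}$. After gluing the sector-by-sector identifications along the $T_i$, the link $V_f\cap S_\varepsilon^3$ can be assembled as an iterated satellite/splice construction: the face $\Delta_N$ yields the companion link $L_N$, while the inner faces $\Delta_1,\ldots,\Delta_{N-1}$ contribute pattern links placed in solid tori associated to the vertices of $\Gamma_+(f)$ along $L_N$. This is precisely the combinatorial structure encoded by the bracket $L([L_1,\ldots,L_{N-1}],L_N)$.

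The delicate part of the program, and what I expect to be the main obstacle, is showing that the gluing along the tori $T_i$ is topologically correct in the right framing. The rescalings and isotopies on each sector involve arbitrary choices (cut-off radii, scales) that affect only the interior of each piece, but the identifications across $T_i$ depend sensitively on how the non-face monomials of $f$ behave in a neighbourhood of the interface. Verifying that the niceness hypothesis rules out any unexpected twisting contribution from monomials above the Newton boundary, so that the cabling/splicing pattern is determined purely by the boundary data, is the analytic heart of the argument; this is also what justifies the invariance statement that terms above the Newton boundary do not affect the link.
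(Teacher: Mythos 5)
Your sector-decomposition skeleton is a legitimate alternative to what the paper does (the paper works instead with the rescaled $r$-parameter deformations $f_i$ of the $g_i$ plus the implicit function theorem to produce the components, and the curve selection lemma to rule out others), but as written the proposal contains two genuine misconceptions. First, you have misread what the niceness hypothesis is for. Niceness says that for every \emph{non-extreme vertex} $\Delta$ one has $V_{f_\Delta}\cap(\C^*)^2=\emptyset$. Near an interface between two adjacent $1$-face sectors the dominant part of $f$ is precisely the vertex face function $f_\Delta$, so niceness forces $V_f$ to \emph{avoid} a whole neighbourhood of each interface (away from the coordinate axes): there are no circles in which $V_f$ meets $T_i$ transversely, and there is no gluing, framing matching, or splicing to perform. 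Correspondingly, $L([L_1,\ldots,L_{N-1}],L_N)$ is not an iterated satellite with companion $L_N$; by Definitions~\ref{def:russiandoll2} and~\ref{def:russiandoll3} it is simply the disjoint union of the $L_i$ placed in nested solid tori around the core $\{(0,\rme^{\rmi t})\}$ together with $L_N$ in the complementary solid torus. The ``analytic heart'' you anticipate at the interfaces is therefore not where the difficulty lies; what must be shown at the vertex weights is emptiness, not compatibility of two parametrizations.

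Second, the proposal never actually excludes extra components of $V_f\cap S^3_\varepsilon$. Showing that each $f_{\Delta_i}$-piece survives perturbation (your Ehresmann step, or the paper's implicit function theorem in Lemma~\ref{thm}) only proves that $L([L_1,\ldots,L_{N-1}],L_N)$ is a \emph{sublink} of $L_f$; the converse inclusion is the substance of Theorem~\ref{thmmixed}. The paper obtains it by applying the curve selection lemma to a hypothetical extra component, extracting an analytic arc $(u_*(\chi),v_*(\chi))$ whose order ratio $k$ defines a weight vector: if $k=k_i$ for some $1$-face the arc limits onto $L_i$ (contradiction), and if not, $k$ is a vertex weight and the limit produces a zero of $f_\Delta$ in $(\C^*)^2$, contradicting niceness; components on the coordinate axes are absorbed into $L_1$ or $L_N$ according to convenience. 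Your sectors could in principle deliver the same conclusion, but only if the vertex regions are taken as full-dimensional sectors covering the rest of $S^3_\varepsilon$ with a uniform approximation $f\approx f_\Delta$ there, which is exactly the step your write-up replaces by the (incorrect) transversality picture at the tori $T_i$. Without either that uniform covering or the curve selection argument, the proof is incomplete.
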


Here $N$ is the number of compact 1-faces of the Newton boundary. Throughout the paper we assume that $N\geq 1$, which implies that all mixed polynomials in this paper satisfy $f(O)=0$. Every $L_i$, $i=1,2,\ldots,N$, is a link (i.e., a finite disjoint union of smoothly embedded circles) that corresponds to the zeros of an appropriate function $g_i$ associated with the $i$th 1-face, where $L_1$ is a link in $\C\times S^1$, $L_i$ with $i\in\{2,3,\ldots,N-1\}$ is a link in $(\C\backslash\{0\})\times S^1$ and $L_N$ is a link in $S^1\times \C$. The link $L([L_1,L_2,\ldots,L_{N-1}],L_N)$ can then be explicitly constructed (see Definitions~\ref{def:russiandoll2} and \ref{def:russiandoll3}, Figure~\ref{fig:links}) as a link in $S^3$ and contains the $L_{i}$s (interpreted as links in $S^3$ by appropriately embedding $\C\times S^1$ and $S^1\times\C$ into $S^3$) as sublinks. The construction in Definitions~\ref{def:russiandoll2} and \ref{def:russiandoll3} is formulated in terms of parametrisations of the $L_i$s, but the isotopy class of $L([L_1,L_2,\ldots,L_{N-1}],L_N)$ only depends on the isotopy classes of the $L_i$s in $\C\times S^1$, $(\C\backslash\{0\})\times S^1$ and $S^1\times \C$, respectively.

It follows from the classification of holomorphic functions $\C^2\to\C$ with an isolated singularity (see, \cite{King1978, Saeki1989}) and by Fukui and Yoshinaga in \cite{FukuiYoshinaga1985} that the links of the singularities of non-degenerate, holomorphic polynomials from $\C^2\to\C$ with the same Newton principal part $f_{\Gamma}$, consisting of all the terms on the Newton boundary $\Gamma$, are ambient isotopic. Since the isotopy class of $L([L_1,L_2,\ldots,L_{N-1}],L_N)$ only depends on the Newton principal part of a mixed polynomial, our result can be seen as a generalization of the result in the holomorphic case.
% Since the isotopy class of $L([L_1, L_2,\ldots,L_{N−1}],L_N)$ in Theorem~\ref{thm:main} only depends on the Newton principal part of a mixed polynomial, our result can be seen as a generalization of the result in the holomorphic case.}

%{\color{red}It follows from results by Saeki in \cite{Saeki1989} and by Fukui and Yoshinaga in \cite{FukuiYoshinaga1985} that the links of the singularities of non-degenerate, holomorphic polynomials from $\C^2\to\C$ with the same \textit{Newton principal part} $f_{\Gamma}$, consisting of all the terms on the Newton boundary $\Gamma$, are ambient isotopic. Since the isotopy class of $L([L_1,L_2,\ldots,L_{N-1}],L_N)$ in Theorem \ref{thm:main} only depends on the Newton principal part of a mixed polynomial, our result can be seen as a generalization of the results by Saeki, Fukui and Yoshinaga.}
%Since the link type only depends on the \textit{Newton principal part} $f_{\Gamma}$ consisting of all the terms on the Newton boundary $\Gamma$, our result constitutes a generalization of the corresponding results in the holomorphic case proved by Saeki in \cite{Saeki1989} and by Fukui and Yoshinaga in \cite{FukuiYoshinaga1985}. 

A special family of mixed polynomials is the set of \textit{semiholomorphic} polynomials, which by definition do not depend on the variable $\overline{u}$ and are therefore holomorphic in the variable $u$. This makes the family of semiholomorphic polynomials an interesting intermediate between the general mixed setting and the holomorphic setting. They admit various connections to the study of complex polynomials and (from a topological perspective) braided vanishing sets. We show for example that the links $L_i$, $i=1,2,\ldots,N$, in Theorem \ref{thm:main} can be understood as closed braids in $\C\times S^1$ if the Newton principal part of $f$ is semiholomorphic. We write $B_i$ for the corresponding braid in $\C\times [0,2\pi]$. Furthermore, the holomorphicity in $u$ makes it significantly easier to find critical points and in turn prove that a singularity is isolated. In fact, several constructions of real polynomials with isolated singularities have taken advantage of these desirable properties, as one can see for instance in \cite{Bode2019, Rudolph1987}.

Taking advantage of that, we prove that for a mixed polynomial $f$ whose Newton principal part $f_{\Gamma}$ is semiholomorphic the conditions from Theorem~\ref{thm:main} can be further weakened. More precisely, we prove:
\begin{prop}\label{prop:main} 
Let $f:\mathbb{C}^2\to\mathbb{C}$ be an inner non-degenerate mixed polynomial, whose Newton principal part $f_{\Gamma}$ is semiholomorphic. Then $f$ has a weakly isolated singularity at the origin and the link $L_f=L_{f_{\Gamma}}$ is isotopic to the closure of the braid $B(B_1,B_2,\ldots,B_{N})$ if $f$ is $u$-convenient. If $f$ is not $u$-convenient, then the link of the singularity is $B^o(B_1,B_2,\ldots,B_N)$. 
\end{prop}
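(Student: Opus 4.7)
The plan is to bootstrap off Theorem~\ref{thm:main} and exploit the extra structure coming from the $u$-holomorphicity of the Newton principal part. First, I would prove the weakly isolated singularity claim. Because $f_{\Gamma}$ is semiholomorphic, for every compact face $\Delta\subset\Gamma$ the face function $f_{\Delta}$ satisfies $\partial f_{\Delta}/\partial\overline{u}\equiv 0$, so rank-drop on $V_{f_{\Delta}}\cap(\C^*)^2$ can only be detected through the remaining partials. Inner non-degeneracy was formulated to rule out exactly this situation on the toric stratum. A standard Curve Selection Lemma argument, combined with the weighted rescalings attached to each face of $\Gamma$, converts a hypothetical sequence of points in $\Sigma_{f}\cap V_{f}$ accumulating at the origin into a critical point of some $f_{\Delta}$ on $V_{f_{\Delta}}\cap(\C^*)^2$, contradicting inner non-degeneracy. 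Hence the origin is weakly isolated.

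Second, I would show $L_f=L_{f_{\Gamma}}$. With the weakly isolated property in place, the argument used for Newton non-degenerate polynomials in \cite{Oka2010} adapts: on a sufficiently small sphere $S^3_{\varepsilon}$, the homotopy $f_t=f_{\Gamma}+t(f-f_{\Gamma})$ has no critical values crossing $0$, because the higher-order terms are dominated by $f_{\Gamma}$ on the relevant weighted annuli, and any potential failure would, by another Curve Selection Lemma, again produce a forbidden critical point of a face function. Standard transversality then yields an ambient isotopy of $V_{f_t}\cap S^3_{\varepsilon}$, reducing the problem to computing the link of $f_{\Gamma}$.

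Third, I would identify the link of $f_{\Gamma}$ with the stated braid closure. For each $1$-face $\Delta_i$, semiholomorphicity of $f_{\Delta_i}(u,v)$ means that, for every fixed $v\in S^1$ (after the appropriate weighted scaling used in the definition of $L_i$), the equation $f_{\Delta_i}=0$ has a finite set of $u$-roots depending continuously on $v$; this is precisely the data of a closed braid in $\C\times S^1$, namely $B_i$. Unpacking Definitions~\ref{def:russiandoll2} and \ref{def:russiandoll3} under this semiholomorphic hypothesis, the russian-doll construction $L([L_1,\ldots,L_{N-1}],L_N)$ assembles coherently into the braid $B(B_1,\ldots,B_N)$. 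When $f$ is $u$-convenient, the $N$th face reaches the $v$-axis and $L_N$ provides the outer closing disks in $S^1\times\C$, so we obtain a genuine braid closure; when $u$-convenience fails, no such closure exists and the construction terminates in the open variant $B^o(B_1,\ldots,B_N)$.

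The main obstacle I anticipate is controlling the non-$u$-convenient case in the weakly isolated argument. Inner non-degeneracy is imposed only on compact $1$-faces, so nothing directly prevents pathology along the $\{u=0\}$ axis when $\Gamma$ does not meet that axis. Closing this gap will require carefully tracking the weights selected by the Curve Selection Lemma to show that any candidate critical sequence tending to the origin must in fact lie in a sector controlled by a compact face, at which point the semiholomorphic reduction $\partial f_{\Delta}/\partial\overline{u}\equiv 0$ allows the inner non-degeneracy hypothesis to be applied.
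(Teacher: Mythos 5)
Your outline of weak isolatedness via the curve selection lemma and weighted rescalings matches the paper's Proposition~\ref{weak-isolated}, and your instinct that the non-convenient directions and the axes $u=0$, $v=0$ are the delicate cases is right: the paper handles them with Condition~(i) of Definition~\ref{Newtoncond} applied to $f_{P_1}$ and $f_{P_N}$. However, your plan for the link identification has a genuine gap. You propose to bootstrap off Theorem~\ref{thm:main}, but that theorem requires $f$ to be \emph{nice} (Definition~\ref{def:blabla}), a hypothesis Proposition~\ref{prop:main} deliberately omits. The missing step is to show that inner non-degeneracy together with a semiholomorphic principal part forces $V_{f_\Delta}\cap(\C^*)^2=\emptyset$ for every non-extreme vertex $\Delta$; this is exactly Lemma~\ref{lem:vertex} of the paper (a zero of $f_\Delta=u^n r^m\Phi(t)$ in $(\C^*)^2$ forces $\Phi(t_*)=0$ and hence a critical point on $V_{f_\Delta}\cap(\C^*)^2$, contradicting inner non-degeneracy). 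Without this verification Theorem~\ref{thm:main} is simply not applicable. Note also that in the paper the logical order is the reverse of yours --- Proposition~\ref{prop:main} is proved first and directly, and Theorem~\ref{thm:main} generalizes it --- so a bootstrap is only legitimate because the proof of Theorem~\ref{thm:main} (via Lemma~\ref{thm} and Theorem~\ref{thmmixed}) does not rely on Proposition~\ref{prop:main}.

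Two further problems. First, your reduction $L_f=L_{f_\Gamma}$ via the homotopy $f_t=f_\Gamma+t(f-f_\Gamma)$ is not substantiated: ``dominated on the relevant weighted annuli'' does not address the region near $u=0$ between consecutive cones (precisely where niceness enters), and ``standard transversality'' presupposes a single small radius at which every $V_{f_t}$ meets the sphere transversally, which for merely weakly isolated mixed singularities requires proof. The paper avoids the deformation entirely: it parametrizes $V_f$ by the implicit function theorem applied to the rescaled functions $f_i$ of Lemma~\ref{facedecom} and rules out extra components by a root count --- for a semiholomorphic boundary polynomial $f(\cdot,r\rme^{\rmi t})$ is a degree-$s_N$ complex polynomial in $u$, and for general $f$ one sets $F_{\varphi,r,t}(R)=f(R\rme^{\rmi\varphi},R\rme^{-\rmi\varphi},r\rme^{\rmi t},r\rme^{-\rmi t})$, treats $R$ as a complex variable with $R=0$ a root of multiplicity $s_N$ at $r=0$, and uses continuity of roots of polynomial families. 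Second, you have the structure of the non-$u$-convenient case wrong: $B^o(B_1,\ldots,B_N)$ is not an ``open variant'' in which ``no closure exists''; it is the closure of $B(B_1,\ldots,B_N)$ together with its braid axis, the extra unknot component arising because $\{v=0\}\subset V_f$ exactly when $f$ fails to be $u$-convenient.
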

The property of $u$-convenience is defined in Section~\ref{sec:prem}. The explicit constructions of the braids $B(B_1,B_2,\ldots,B_{N})$ and the links $B^o(B_1,B_2,\ldots,B_N)$ are given in Definition~\ref{def:russiandoll} and Figure~\ref{fig:braids}. As mentioned above the braids $B_i$, $i=1,2,\ldots,N$, are the links $L_i$, $i=1,2,\ldots,N$, from Theorem~\ref{thm:main} interpreted as closed braids. Thus, $B_1$ is a braid in $\C\times[0,2\pi]$ and $B_i$ with $i>1$ is a braid in $(\C\backslash\{0\})\times[0,2\pi]$, defined as the zeros of an appropriate function $g_i$ associated with the $i$th 1-face. Again, the construction of the braids $B(B_1,B_2,\ldots,B_{N})$ and the links $B^o(B_1,B_2,\ldots,B_N)$ is given in terms of parametrizations of curves. However the isotopy class of the constructed link only depends on the braid isotopy classes of the $B_i$s in $\C\times[0,2\pi]$ and $(\C\backslash\{0\})\times[0,2\pi]$, respectively. Note that in Proposition~\ref{prop:main} the assumption that $f$ is nice is no longer needed.

Combining our results on strong inner non-degeneracy with the theory of $P$-fibered braids \cite{bodesat}, we obtain a {\it new family of real algebraic links} as the below result shows.
\begin{teo}\label{cor:main}
Let $(B_1,B_2,\ldots,B_N)$ be a sequence of braids such that for all $i\in\{1,2,\ldots,N\}$ the braid $B_i$ consists of $s_i$ strands and is P-fibered with $O$-multiplicity $\sum_{j<i}s_j$ . Let $r_N:=1$.\\ Then for every sequence of positive integers $(r_j,r_{j+1},\ldots,r_{N})$ there is a positive integer $N_{j-1}$ such that the closure of $B(B_1^{2r_1},B_2^{2r_2},\ldots,B_{N-1}^{2r_{N-1}},B_N^{2r_N})$ is real algebraic if $r_j>N_j$ for all $j=1,2,\ldots,N-1$.
\end{teo}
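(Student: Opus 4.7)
The plan is to construct, for each admissible sequence $(r_1,\ldots,r_{N-1})$ with $r_N=1$ fixed, a $u$-convenient semiholomorphic mixed polynomial $f$ that is strongly inner non-degenerate and whose $i$th compact $1$-face of the Newton boundary has face polynomial proportional to $g_i^{2r_i}$, for a suitable semiholomorphic polynomial $g_i$ realizing $B_i$. Strong inner non-degeneracy then gives an isolated singularity of $f$ at the origin, so the corresponding link is by definition real algebraic, and Proposition~\ref{prop:main} identifies that link with the closure of $B(B_1^{2r_1},\ldots,B_N^{2r_N})$. The starting ingredient comes from the theory of P-fibered braids \cite{bodesat}: the hypothesis that $B_i$ is P-fibered with O-multiplicity $m_i:=\sum_{j<i}s_j$ produces a semiholomorphic polynomial $g_i(u,v,\bar v)$, holomorphic in $u$ of degree $s_i$, whose zero set intersected with $\C\times S^1$ is the closed braid $B_i$ and that vanishes along $v=0$ to order exactly $m_i$. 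In particular $g_1$ contains a term of the form $c\,u^{s_1}$, which will make the assembled polynomial $u$-convenient.

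With the $g_i$ in hand, I define
\[
 f(u,v,\bar u,\bar v)\;:=\;\sum_{i=1}^{N} g_i(u,v,\bar v)^{2r_i},
\]
possibly with auxiliary monomial factors in $v$ so that the summands lie in their intended locations in the Newton polygon. The support of $g_i^{2r_i}$ is contained in the $(2r_i)$-dilate of the support of $g_i$, hence lies on a line segment whose slope is determined by $s_i$, $m_i$, and $r_i$. Choosing $r_j$ sufficiently large relative to the already-chosen $r_{j+1},\ldots,r_N$ and to the various $s_i,m_i$ makes these segments appear as the $N$ compact $1$-faces of $\Gamma_+(f)$, in strictly decreasing order of slope from $i=1$ (the face nearest the $u$-axis) to $i=N$ (the face nearest the $v$-axis); the thresholds $N_{j-1}$ in the statement are precisely those imposed by these slope inequalities and by the requirement that consecutive faces share a lattice vertex. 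By construction the face polynomial of the $j$th compact $1$-face is $g_j^{2r_j}$ up to a monomial factor in $v$, so its associated braid is $B_j^{2r_j}$.

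The main obstacle is the verification of strong inner non-degeneracy. Non-degeneracy of each individual face polynomial follows face by face from the P-fibered property: a P-fibered braid $B_i$ is, by definition, realized by a semiholomorphic polynomial $g_i$ whose mixed Milnor map $g_i/|g_i|$ has no critical points off $g_i^{-1}(0)$, which is exactly what is needed to conclude that $g_j^{2r_j}$ is non-degenerate as a face polynomial on $(\C\backslash\{0\})^2$. The delicate point is to ensure that the terms of $f$ lying strictly above the Newton boundary cannot produce additional critical points near a vertex shared by two adjacent faces, nor inside a small ball about the origin. This is what the hierarchical choice $r_j>N_j$ is designed for: once $r_j$ is large enough compared to the $r_{j+1},\ldots,r_N$ already fixed, on a neighbourhood of each vertex the two adjacent face polynomials dominate all remaining terms of $f$ in the appropriate weighted sense, so the face-by-face non-degeneracy analysis extends to all of $f$. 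Determining the thresholds $N_{N-2},N_{N-3},\ldots,N_0$ inductively from this domination condition completes the verification, after which Proposition~\ref{prop:main} applied to the resulting $u$-convenient semiholomorphic $f$ produces the stated link as the closure of $B(B_1^{2r_1},\ldots,B_N^{2r_N})$.
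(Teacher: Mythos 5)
Your high-level skeleton matches the paper's: build a $u$-convenient semiholomorphic boundary polynomial that is strongly inner non-degenerate and whose compact 1-faces realize the given braids, then get an isolated singularity from Proposition~\ref{strong-isolated} and identify the link via Proposition~\ref{prop:main}. But the central step of your construction is wrong. The $2r_i$th power of $B_i$ \emph{in the braid group} is realized by traversing the loop of polynomials $2r_i$ times, i.e.\ by $g_i(u,\rme^{2r_i\rmi t})$, not by the ring-power $g_i^{2r_i}$. The zero set of $g_i^{2r_i}$ coincides with that of $g_i$ — it traces out $B_i$, not $B_i^{2r_i}$ — and every point of it is a root of multiplicity $2r_i\geq 2$, hence a critical point lying on the zero set. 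So in $f=\sum_i g_i^{2r_i}$ every face function is automatically Newton degenerate and carves out the wrong braid; no choice of the $r_j$ can repair this. The paper instead sets $\tilde g_i=u^{m_i}g_i$, defines $f_N=r^{k_Ns_N}\tilde g_N(u/r^{k_N},\rme^{2\rmi t})$, and inductively
\[
f_{j-1}(u,r\rme^{\rmi t},r\rme^{-\rmi t})=a_j(r,t)\,r^{k_{j-1}s_{j-1}}\,\tilde g_{j-1}\!\left(\frac{u}{r^{k_{j-1}}},\rme^{2r_{j-1}\rmi t}\right),
\]
where $a_j(r,t)$ is the coefficient of $u^{m_j}$ in $f_j$; this factor forces the highest-order $u$-term of $f_{j-1}$ to equal the lowest-order $u$-term of $f_j$, which is what lets the pieces glue into a single boundary polynomial with the $f_j$ as its face functions. (The doubling $\rme^{\rmi t}\mapsto\rme^{2\rmi t}$ is also needed so that each piece is a genuine mixed polynomial in $v,\bar v$ — another point your sum glosses over.)

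Your explanation of the thresholds $N_{j-1}$ is also not the actual mechanism. The assembled $f$ is a boundary polynomial, so there are no terms above the Newton boundary whose "domination" must be arranged. The largeness of $r_{j-1}$ is required because the $(j-1)$st face function carries the inherited coefficient $a_j(1,t)$, which twists the arguments of the critical values $v_{j-1,\ell}$ of $\tilde g_{j-1}$: one needs
\[
\frac{\partial \arg\bigl(a_j(1,t)\,v_{j-1,\ell}(2r_{j-1}t)\bigr)}{\partial t}
=\frac{\partial \arg(a_j(1,t))}{\partial t}+2r_{j-1}\frac{\partial \arg(v_{j-1,\ell})}{\partial t}\neq 0,
\]
which holds once $r_{j-1}$ is large enough that the second summand dominates the first. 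This is why $N_{j-1}$ can only be determined after $r_j$ (and $k_j$) have been fixed, i.e.\ why the statement quantifies over $(r_j,\ldots,r_N)$ before producing $N_{j-1}$ — a dependency your "slope inequalities plus domination" account does not capture.
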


Here $B_i^{2r_i}$ refers to $2r_i$th power of $B_i$ in the braid group.

This paper focuses on classical links, that is, embeddings of 1-dimensional closed manifolds in 3-dimensional spaces, which means that we only consider mixed polynomials $f:\C^2\to\C$. However, we expect that our definitions and most of our results stated in the paper have analogues in the more general setting of mixed polynomials $f:\C^n\to\C$.

\vspace{0.3cm}
The remainder of this paper is structured as follows.
In Section~\ref{sec:prem} we recall the concepts of mixed polynomials and mixed singularities as well as the construction of the Newton polygon and different notions of non-degeneracy. We also review definitions and properties of braids and their relation to polynomials and singularities. In Section~\ref{section3} we introduce our notion of being inner non-degenerate, investigate its relation to other notions of non-degeneracy and show that it implies the existence of a weakly isolated singularity at the origin. The braids $(B_1,B_2,\ldots,B_N)$ and $B^o(B_1,B_2,\ldots,B_N)$ are described in Section~\ref{section4}, where we also prove Proposition~\ref{prop:main}. In Section~\ref{section5}, we define the links $L([L_1,L_2,\ldots,L_{N-1}],L_N)$ that appear as the links of singularities of mixed polynomials with a nice, inner non-degenerate boundary. We also prove the main result Theorem~\ref{thm:main}. In Section~\ref{section6} we study the isolatedness of singularities of mixed polynomials with regards to our definition of being strongly inner non-degenerate. In particular, we show that strongly inner non-degenerate polynomials have isolated singularities and show that convenient strongly non-degenerate polynomials are strongly inner non-degenerate. We use this property to prove Theorem~\ref{cor:main} in Section~\ref{section7}.

\vspace{0.3cm}
{\bf Acknowledgments:} R. Araújo dos Santos thanks Fapesp/Tematic project process: 2019/21181-0. B. Bode acknowledges funding from the European Union’s Horizon 2020 Research and Innovation Programme under the Marie Sklodowska-Curie grant agreement No 101023017, and E. Sanchez Quiceno acknowledges the
 supports by grants 2019/11415-3 and 2017/25902-8, São Paulo Research Foundation (FAPESP). The authors are thankful to Professor Osamu Saeki from Kyushu University, Japan, for all suggestions and comments that contributed to enrich the paper.

\section{PRELIMINARIES}\label{sec:prem}
\subsection{Mixed polynomials and Newton boundaries}
In this section we review the essential tools concerning mixed functions and their Newton boundaries. Most of this background material can be found in more detail in \cite{Oka2010}.

We consider the germ of a mixed polynomial $f:(\C^2,0)\to (\C,0)$, $$f(z,\bar{z})=\sum_{\nu,\mu} c_{\nu,\mu}z^{\nu}\bar{z}^{\mu},$$
where $z = (u,v)$, $ \bar{z}= (\bar{u},\bar{v})$, $z^{\nu} = {u}^{\nu_{1}}v^{\nu_{2}}$ for $\nu = (\nu_{1} ,\nu_{2})$ (respectively $\bar{z}^{\mu}=\bar{u}^{\mu_{1}}{\bar{v}}^{\mu_{2}}$ for $\mu=(\mu_{1},\mu_{2})$). Most readers might be more accustomed to the notations $z=(z_1,z_2,\ldots,z_n)$ and $\bar{z}=(\bar{z}_1,\bar{z}_2,\ldots,\bar{z}_n)$. However, in this case where $n=2$ we prefer to work with complex variables $u$ and $v$ in order to be consistent with the notation in \cite{Bode2019}. 

We will slightly abuse notation and use the same notation for the germ of a function or a set and its representative. Furthermore, the origins in $\C^2$ and in $\C$ will both be denoted by 0 if there is no risk of confusion. Occasionally, we will also write $O$ for the origin in $\C^2$.

%{\color{red} Some abuse the notations: for the germ of a function and its representative we used the same notation. The same abuse for germ of sets. The origins in $\C^2$ and in $\C$ it will be used the same notation $0,$ if there were no risk of confusion.}

\vspace{0.2cm}

 By definition, the zero set  $V_f:= f^{-1}(0)$ is called \textit{the mixed hypersurface}. It follows that $0 \in V_f$, where $0$ is the origin of $\C^2$. For a fixed variable $x \in \{u,v,\bar{u},\bar{v}\}$ we say that $f$ is a \textit{$x$-semiholomorphic polynomial} if $f$ does not depend on the variable $\bar{x}$, where naturally $\overline{(\bar{u})}=u$ and $\overline{(\bar{v})}=v$. With this definition we have that $f$ is holomorphic if and only if $f$ is both $u$-semiholomorphic and $v$-semiholomorphic. Analogously, $f$ is anti-holomorphic if and only if $f$ is both $\bar{u}$-semiholomorphic and $\bar{v}$-semiholomorphic. 

\vspace{0.2cm}

Following the convention in \cite{bode_lemniscate} we often refer to a $u$-semiholomorphic polynomial simply as semiholomorphic. We also denote the partial derivative of $f$ with respect to the variable $u$ by $f_u$. Analogously, we write $f_{\bar{u}}$, $f_v$ and $f_{\bar{v}}$ for the partial derivatives with respect to the variables $\bar{u}, v$ and $\bar{v}$, respectively. 
\begin{definition}\label{conjuntosingular}
Given a mixed polynomial $f:(\C^2,0)\to (\C,0)$, the singular set of $f$, denoted by $\Sigma_f,$ is defined as the solution of 
\begin{equation}\label{criticalpoint}
    \Sigma_f:=
\begin{cases} 
&s_{1,f}:= f_{u} \bar{f}_{\bar{v}}- \bar{f}_{\bar{u}}f_{v}=0, \\
 &s_{2,f}:=|f_{u}|^2-|f_{\bar{u}}|^2=0 \\
 &s_{3,f}:=|f_{v}|^2-|f_{\bar{v}}|^2=0
   \end{cases}
\end{equation}
considered as a germ of a set at the origin. 
\end{definition}
\noindent Definition~\ref{conjuntosingular} is equivalent to the Oka's definition in \cite[Proposition 8]{Oka2010} and Rudolph's definition in \cite[Machinery 4.1]{Rudolph1987}. If there is a neighbourhood $U$ of the origin $0\in \C^2$ with $U\cap\Sigma_f=\{0\}$, we say that the origin is an isolated mixed singularity of $f$. If there is a neighbourhood $U$ of the origin $0$ with $U\cap\Sigma_f\cap V_f=\{0\}$, we say that the origin is a weakly isolated mixed singularity of $f$. 

\vspace{0.2cm}

\noindent Note that if the origin is a (weakly) isolated mixed singularity of the germ $f:(\C^2,0)\to (\C,0)$, it is also a (weakly) isolated singularity of the respective real mapping germ $f:(\R^4,0)\to (\R^{2},0)$, as defined in the introduction.

\vspace{0.2cm}

The support of $f$ is defined as $supp(f):=\{\nu+\mu: \ c_{\nu,\mu}\neq 0\}\subset\mathbb{N}^2$. Given $\nu+\mu \in supp(f)$ we denote by $M_{\nu,\mu}:=M_{\nu,\mu}(z,\bar{z}) =c_{\nu,\mu} z^{\nu}\bar{z}^{\mu}$ the mixed monomial of the corresponding degrees in $z$ and $\bar{z}$. Let $P =(p_1, p_2)\in (\N^+)^2$ be a non-zero vector, that we call a \emph{weight vector}. The \emph{radial degree} of the monomial $M_{\nu,\mu}$ with respect to weight vector $P$ is defined by $rdeg_P (M_{\nu,\mu}):= \sum^2_{j=1} p_j (\nu_j + \mu_j).$
We call $f(z, \bar{z})$ a \emph{radially weighted homogeneous polynomial of radial type} $(P; d_r),$ if 
$d_{r}=rdeg_P (M_{\nu,\mu})$ for every $\nu+\mu \in supp(f).$

%\Gamma_+(supp(f))$

\vspace{0.2cm}

The \textit{Newton polygon} $\Gamma_{+}(f)$ of a mixed polynomial $f$ is the convex hull of
$$\bigcup_{\nu+\mu \in supp(f)}(\nu+\mu)+(\R^{+})^{2}.$$
See Figure~\ref{newtonboundaryholomorphicq} for an illustration of an example.
The \textit{Newton boundary} $\Gamma(f)$ of a Newton polygon is defined to be the set of lattice points $w\in\mathbb{Z}^2$ that lie on some compact face of $\Gamma_{+}(f).$ These faces are either 0-dimensional (``vertices'') or 1-dimensional (``edges''). The \textit{Newton principal part} $f_{\Gamma}$ of $f$ is defined by
\begin{equation}\label{eq:ffboun}
    f_{\Gamma}(z,\bar{z})=\sum_{\nu+\mu \in \Gamma(f)} c_{\nu,\mu} z^\nu \bar{z}^\mu.
\end{equation} 
When $f\equiv f_{\Gamma}$, we say that $f$ is a \textit{boundary polynomial}.

%%%%%%%%%%%%%%%%%

\vspace{0.2cm}
By definition, we say that $f(z,\bar{z})$ is $u-$\emph{convenient} if $\Gamma(f)$ intersects the horizontal axis of $(\R^{+})^{2}.$ Likewise, we say that $f(z,\bar{z})$ is $v$-convenient if $\Gamma(f)$ intersects the vertical axis of $(\R^{+})^{2}.$ A mixed function is called \emph{convenient} if it is both $u$-convenient and $v$-convenient.
\vspace{0.2cm}

For each compact face $\Delta$ (0- or 1-dimensional) we define the \emph{face function associated to the face $\Delta$} by $$f_{\Delta}(z,\bar{z})\defeq \sum_{\nu+\mu \in \Delta} c_{\nu,\mu}z^{\nu}\bar{z}^{\mu}.$$ 

For a given compact 1-dimensional face, or 1-face for short, $\Delta$ of $\Gamma_+(f)$ there exists a unique weight vector $P =(p_1, p_2)$ orthogonal to $\Delta$ with $\gcd(p_{1},p_{2})=1,$  
such that the linear function $\ell_P$ on $\Gamma (f)$ defined by $\ell_P(w):= p_1 w_1+p_2 w_2 ,$ for $w:=(w_1,w_2) \in \Gamma(f)$ takes its minimal value on $\Delta$. In this case we write $\Delta := \Delta (P, f)$ or, $\Delta(P)$ for short, and denote the minimal value by $d(P; f)$. We then also refer to $f_\Delta$ as the \emph{face function associated to the weight $P$} and denote it by $f_P$. Note that the face function $f_{\Delta(P)} (z,\bar{z})$ is radially weighted homogeneous of type $(P,d_{r}),$ with $d_{r} =d(P;f)$.
%Hence, we have that $d(P, f) = \min \{ rdeg_P M_{\nu,\mu}: \nu + \mu \in \Gamma(f)\}$. 

\begin{obs}More generally, for any non-zero weight vector $Q=(q_{1},q_{2})\in (\N^{+})^{2}$ one may consider the linear function $\ell_Q$ on a given Newton boundary $\Gamma(f)$ via $\ell_{Q}(x)=q_{1}a+q_{2}b,$ for $x=(a,b)\in\Gamma(f).$ Now, for a given mixed polynomial $f$ with Newton boundary $\Gamma(f)$ let $d(Q;f)$ be the minimal value that $\ell_{Q}$ takes on $\Gamma(f)$. The {\bf Q-relative face function associated to the Newton boundary $\Gamma(f)$} is defined and denoted by

\begin{equation*}
  f_{Q}(z,\bar{z}):=\sum_{\substack{\nu+\mu \in \Gamma(f),\\ \ell_{Q}(\nu+\mu)=d(Q;f)}}c_{\nu,\mu}z^{\nu}\bar{z}^{\mu}.
\end{equation*}
If $Q$ is the weight vector associated to some 1-face $\Delta(Q)$, then we have $f_Q=f_{\Delta(Q)}$.
\end{obs}
We say that a lattice point $w\in \mathbb{Z}^2$ lies \textit{above the Newton boundary} of $f$ if $\ell_Q(w)>d(Q;f)$ for all positive weight vectors $Q$.

\vspace{0.3cm}

On the set of non-zero weight vectors we consider the following total order: for $P=(p_1,p_2)\in(\mathbb{N}^+)^2$ and $Q=(q_1,q_2)\in(\mathbb{N}^+)^2$ with $\gcd(p_1,p_2)=\gcd(q_1,q_2)=1,$ we write $P\succ Q$ if and only if $\tfrac{p_1}{p_2}>\tfrac{q_1}{q_2}$.

On the Newton boundary $\Gamma(f)$ consider the sequence of positive weight vectors $\mathcal{P=}\{P_1, P_2,\ldots,P_N\}$ associated to the $N$ compact 1-faces of $\Gamma_+(f),$ indexed according to the order above so that $P_i\succ P_j$ if and only if $i<j$. This order is equivalent to ordering the compact 1-faces of $\Gamma_+(f)$ by their slope, with $\Delta(P_1)$ being the steepest edge of all compact 1-faces of $\Gamma_+(f)$, $\Delta(P_2)$ having the next larger negative slope, and so on. In the next example we will illustrate the main concepts given above.

%%%%%%%%%%%%%%%%%%%

\begin{ex}\label{ex1}

We consider the boundary polynomial $$f(u,v,\bar{v})= u^8+v^3u^2+\bar{v}^5u-2(v^7+\bar{v}^7).$$ 

 \begin{figure}[H]
 \centering
 \begin{tikzpicture}[scale=0.8]
\begin{axis}[axis lines=middle,axis equal,yticklabels={0,,2,4,6,8,10}, xticklabels={0,,2,4,6,8,10},domain=-10:10,     xmin=0, xmax=10,
                    ymin=0, ymax=8,
                    samples=1000,
                    axis y line=center,
                    axis x line=center]
\addplot coordinates{(11,0)(8,0) (2,3) (1,5) (0,7) (0,10)};
\draw[line width=2pt,red,-stealth,thick,dashed](50,15)--(60,35)node[anchor=west]{$\boldsymbol{P_2}$};
\draw[line width=2pt,red,-stealth,thick,dashed](10,50)--(30,60)node[anchor=west]{$\boldsymbol{P_1}$};
%\draw[line width=2pt,red,-stealth,thick,dashed](25,60)--(65,90)node[anchor=west]{$\boldsymbol{P_3}$};
%\fill[gray!50,nearly transparent] (0,60) -- (20,20) -- (80,0) -- (0,0) -- (0,60) --cycle;
 \filldraw[black] (50,110) node[anchor=north ] {$\Gamma_+(f)$};
  \filldraw[black] (65,65) node[anchor=north ] {$\Gamma_+(f)$};
% \filldraw[blue] (30,50) circle (2pt) node[anchor=south west] {$\boldsymbol{(2,2)}$};
%\filldraw[blue] (90,30) circle (2pt) node[anchor=north] {$\boldsymbol{(8,0)}$};
\filldraw[blue] (20,30) circle (2pt) node[anchor=south west] {$\boldsymbol{\Delta_2}$};
\filldraw[blue] (0,70) circle (2pt) node[anchor=south west] {$\boldsymbol{\Delta_{1}}$};
\filldraw[blue] (80,0) circle (2pt) node[anchor=south west] {$\boldsymbol{\Delta_{3}}$};
\fill[yellow!90,nearly transparent] (0,70) -- (20,30) -- (80,0) -- (260,0) -- (260,300) -- (0,300) --cycle;
\end{axis}
\end{tikzpicture}
\caption{The Newton polygon $\Gamma_+(f)$ of a boundary polynomial $f$.
\label{newtonboundaryholomorphicq}}
\end{figure}
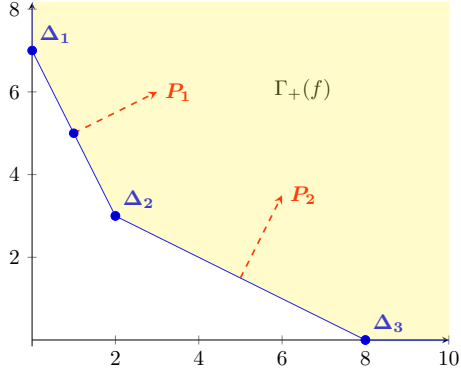

Its Newton boundary $\Gamma (f)$, depicted in Figure \ref{newtonboundaryholomorphicq}, has two compact 1-faces and three vertices $\Delta_1,\Delta_2$ and $\Delta_3$. Corresponding to the 1-faces we have the sequence of weight vectors $\mathcal{P}=\{P_1=(2,1),P_2=(1,3)\}$. The polynomial is convenient, since $\Delta_1$ lies on the vertical axis and $\Delta_3$ lies on the horizontal axis. It is also $u$-semiholomorphic (a property that cannot be verified by only looking at its Newton polygon). We have
\begin{align}
f_{P_1}(z,\bar{z})&=v^3u^2+\bar{v}^5u-2(v^7+\bar{v}^7),\nonumber\\
f_{P_2}(z,\bar{z})&=u^8+v^3u^2.
\end{align}
For any other non-zero weight vector $Q$ in this case the Q-relative face functions associated to the Newton boundary $\Gamma(f)$ are given by:
\begin{align}
f_{Q}&=f_{\Delta_1}=-2(v^7+\bar{v}^7)&\qquad\text{ if }Q\succ P_1,\nonumber\\
f_{Q}&=f_{\Delta_2}=v^3u^2&\qquad\text{ if }P_1\succ Q\succ P_2,\nonumber\\
f_{Q}&=f_{\Delta_3}=u^8&\qquad\text{ if }P_2\succ Q.
\end{align}
 
\end{ex}

%%%%%%%%%%%%%%%%%%

\vspace{0.3cm}

Recall that every compact 1-face $\Delta(P_i)$ corresponds to a radially weighted homogeneous polynomial $f_{P_i}$. Writing $v=r\rme^{\rmi t}$, $\bar{v}=r\rme^{-\rmi t},$ we may associate to $\Delta(P_i)$ a function that does not depend on $r$ anymore, as follows:  for $P_i=(p_{i,1},p_{i,2}) \in \mathcal{P}$ let $g_i:\mathbb{C}\times S^1\to\mathbb{C}$ be such that
\begin{equation}\label{resc}
f_{P_i}(u,\bar{u},v,\bar{v})=f_{P_i}(u,\bar{u},r\rme^{\rmi t},r\rme^{-\rmi t})=r^{k_is_i+n_i} g_i\left(\frac{u}{r^{k_i}},\frac{\bar{u}}{r^{k_i}},\rme^{\rmi t}\right),
\end{equation}
where $k_i:=\tfrac{p_{i,1}}{p_{i,2}}$, $n_i$ is the smallest exponent of $r=|v|=|\bar{v}|$ in $f_{P_i}$ and $s_i$ is the largest exponent of $R:=|u|=|\bar{u}|$ in $f_{P_i}$. Of course, this is equivalent to 
\begin{equation}
g_i(u,\bar{u},\rme^{\rmi t})=\frac{f_{P_i}(r^{k_i}u,r^{k_i}\bar{u},r\rme^{\rmi t},r\rme^{-\rmi t})}{r^{k_is_i+n_i}}
\end{equation}
for any $r>0$. Note that $g_i$ is defined in such a way that it does not depend on the variable $r$. 
%\st{, the modulus of the second complex variable.}}

\vspace{0.2cm}

As in \cite{Oka2010} we call $f_{P_i}$ \emph{true} if it has a non-empty vanishing set in $(\C^*)^2$, where $\C^*=\C\backslash\{0\}$. This is equivalent to the corresponding polynomial $g_i(u,\bar{u},\rme^{\rmi t})$ having a non-empty vanishing set in $\mathbb{C}^*\times S^1.$

\vspace{0.2cm}

A mixed polynomial $f:\mathbb{C}^2\to\mathbb{C}$ is called \emph{true} if for all $P_i\in \mathcal{P}$ the respective $f_{P_i}$ is true.

\begin{definition}[Oka \cite{Oka2010}]
A face function $f_{\Delta}$ of a mixed polynomial $f$ for some compact face $\Delta$ (0- or 1-dimensional) of $\Gamma_+(f)$ is called {\bf Newton non-degenerate} if $V_{f_{\Delta}}\cap\Sigma_{f_{\Delta}}\cap(\mathbb{C}^*)^2=\emptyset$. It is called {\bf strongly Newton non-degenerate} if $\Sigma_{f_{\Delta}}\cap(\mathbb{C}^*)^2=\emptyset$. We say that $f$ is (strongly) Newton non-degenerate if $f_{\Delta}$ is (strongly) Newton non-degenerate for all compact faces $\Delta$ of $\Gamma_+(f)$.
\end{definition}

Occasionally, we will refer to (strongly) Newton non-degenerate functions simply as (strongly) non-degenerate.

\vspace{0.2cm}

Oka proved in \cite{Oka2010} that convenient, Newton non-degenerate mixed functions have weakly isolated singularities at the origin and convenient, strongly non-degenerate mixed functions have isolated singularities at the origin. In either case the intersection of $V_f$ and a 3-sphere $S^3_{\rho}$ of sufficiently small radius $\rho>0$ produces a well defined link $L_f,$ up to isotopy, called {\it the link of the singularity $f$}. If $f$ has a weakly isolated singularity, we say that $f$ is a \textit{weak realization} of $L_f$. If $f$ has an isolated singularity, we say that $f$ is a \textit{strong realization} of $L_f$.

\vspace{0.2cm}

Oka also found a relation between the number of components of the link of the singularity and the numbers of components of links associated to each face function \cite{Oka2010}. In the later sections, we will see that this relation goes much deeper than just the number of components. In fact, the isotopy class of the link of the singularity can be completely determined by these links associated with the face functions.

\subsection{Braids}
In this subsection we recall the fundamentals on braids. A standard reference is \cite{kassel}. 
Let ${u_1,u_2,\ldots,u_s}$ be a set of $s$ distinct complex numbers. A \textit{geometric braid on $s$ strands} is a collection of $s$ disjoint curves in $\mathbb{C}\times[0,2\pi]$ (where the interval is  interpreted as the vertical direction) without any horizontal tangents and with endpoints $(u_j,0)$, $j\in\{1,2,\ldots,s\}$ and $(u_j,2\pi)$, $j\in\{1,2,\ldots,s\}$. See Figure~\ref{fig:braidex}-a) for an example.

It follows from this definition that all of these curves can be parametrized by their height coordinate $t\in[0,2\pi]$. That is, a geometric braid is given explicitly by the union
\begin{equation}\label{eq:braidpara}
    \bigcup_{t\in[0,2\pi]}\bigcup_{j=1}^s\{(u_j(t),t)\}
\end{equation}
for some smooth functions $u_j:[0,2\pi]\to\mathbb{C}$, $j=1,2,\ldots,s$, with $u_j(0)=u_j$ for all $j$, with $u_j(t)\neq u_{j'}(t)$ for all $t\in[0,2\pi]$ and all $j\neq j'$, and with $u_j(2\pi)=u_{\pi(j)}(0)$ for some permutation $\pi$ of $\{1,2,\ldots,s\}$ and all $j\in\{1,2,\ldots,s\}$.

\begin{figure}[h!]
\centering
%\hspace{-1cm}
\labellist
\Large
\pinlabel a) at 10 385
\pinlabel b) at 205 385
\pinlabel c) at 515 385
\endlabellist
\includegraphics[height=6cm]{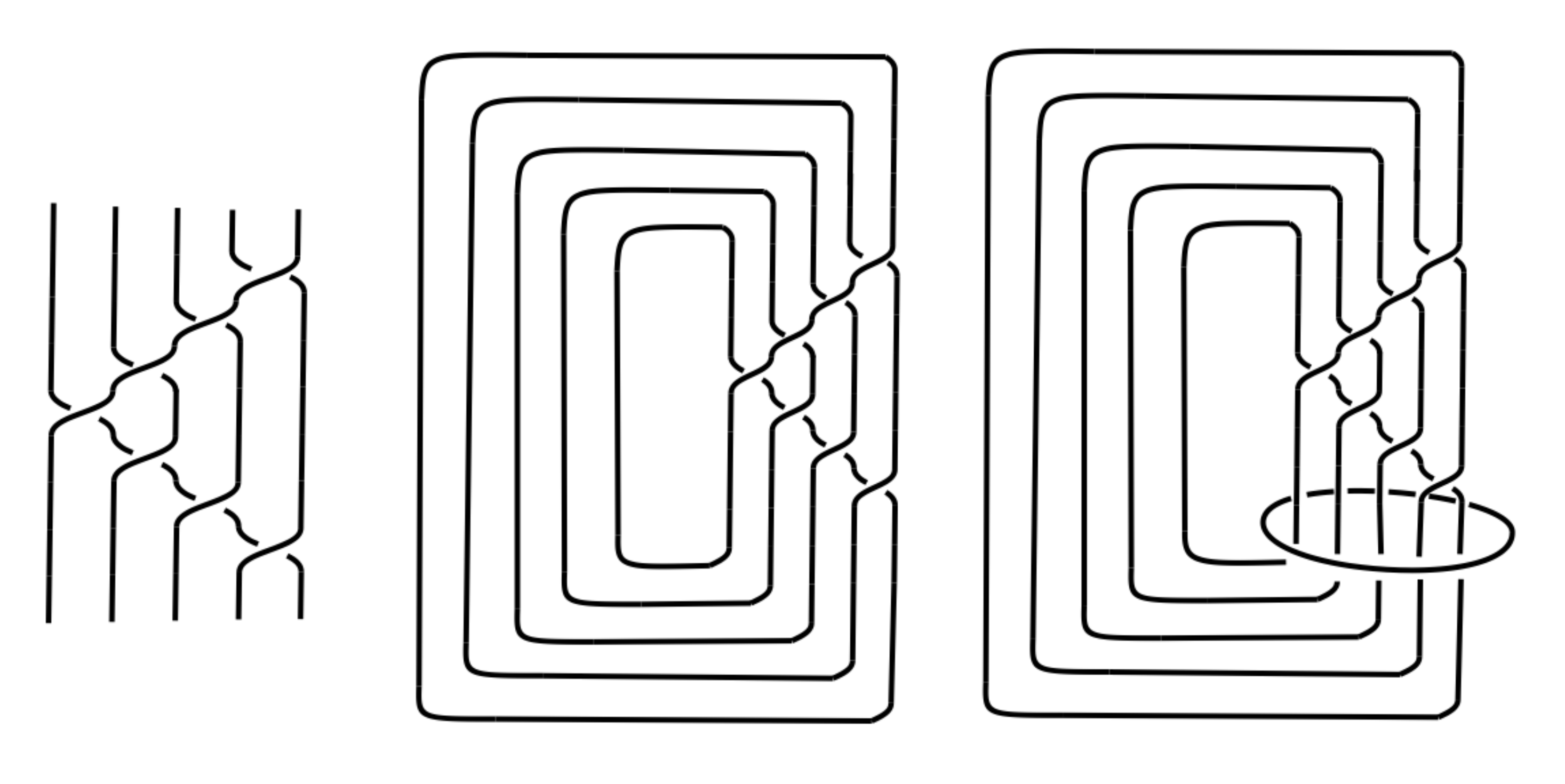}
\caption{a) The diagram of a braid. b) The diagram of the closure of a braid. c) The union of a closure of a braid and its braid axis.\label{fig:braidex}}
\end{figure}

Two geometric braids $B$ and $B'$ on $s$ strands are called \textit{braid isotopic} (or simply \textit{isotopic}) if there is a smooth family of geometric braids $B_{\chi}$, $\chi\in[0,1]$, with $B_0=B$, $B_1=B'$ and such that the endpoints of the strands $(u_j,0)$ and $(u_j,2\pi)$, $j=1,2,\ldots,s$, are fixed throughout the isotopy. In other words, the two sets of parametric curves are related by a smooth isotopy that maintains the geometric braid property throughout the deformation without changing the endpoints. Braid isotopies define an equivalence relation on the set of geometric braids for a fixed choice of ${u_1,u_2,\ldots,u_s}$. We call an equivalence class a \textit{braid}. In many texts both the equivalence class and any representative of that class are called a braid. However, in this article some constructions depend on the actual geometry of the parametric curves as opposed to the topological properties of its equivalence class. In order to emphasize this, we refer to an isotopy class as a braid and call a representative of that isotopy class a geometric braid.

A geometric braid is called \textit{affine} (see for example \cite{OR, ZW}) or \textit{a geometric braid in $(\C\backslash\{0\})\times[0,2\pi]$} if $u_j(t)\neq 0$ for all $j=1,2,\ldots,s$ and all $t\in[0,2\pi]$. Two affine geometric braids are braid isotopic as affine braids if they are related by a braid isotopy in $(\C\backslash\{0\})\times[0,2\pi]$, that is, throughout the isotopy all intermediate geometric braids are affine. The equivalence class of an affine geometric braid under this equivalence relation is called its affine braid istopy class.

Identifying the $t=0$-plane and the $t=2\pi$-plane turns $\mathbb{C}\times[0,2\pi]$ into the (open) solid torus $\mathbb{C}\times S^1$. Since the endpoints of a geometric braid $B$ on the $t=0$-plane match its endpoints on the $t=2\pi$-plane, this identification turns a geometric braid into a collection of disjoint simple closed curves, a link.

With the term \textit{closed braid} we usually refer to the link in $\C\times S^1$ and we denote it by $B$, the same as the (geometric) braid in $\C\times[0,2\pi]$. A closed braid $B$ is thus a link in $\C\times S^1$ such that the projection map $\C\times S^1\to S^1$ onto the second factor restricts to a covering map $B\to S^1$. We say that two closed braids $B_1$, $B_2$ are isotopic as closed braids if there is a smooth isotopy of closed braids $K_{\chi}$, $\chi\in[0,1]$, in $\C\times S^1$ with $K_0=B_1$ and $K_1=B_2$. This can be interpreted as a braid isotopy in $\C\times [0,2\pi]$ where the chosen complex numbers $u_1, u_2,\ldots,u_s$ are allowed to vary smoothly.

Since we are specifically interested in links in the 3-sphere, we use the standard untwisted embedding of the solid torus in $S^3$ as the neighbourhood of an unknot to obtain a link in $S^3$, which is called \textit{the closure of the geometric braid} $B$, see Figure~\ref{fig:braidex}b). Note the subtlety in our terminology. A closed braid is a link in $\C\times S^1$, but the closure of a braid is a link in $S^3$. It is well-known that every link in $S^3$ arises as the closure of some braid. Furthermore, the link type of the closure only depends on the braid (the braid isotopy class), not the geometric braid (its representative).

\vspace{0.2cm}

The complement of this embedded (open) solid torus in $S^3$ is a (closed) solid torus. We call the core (the center line, the 0-section, ...) $\gamma$ of this complementary solid torus a \textit{braid axis} of the closure of $B$. Note that $\gamma$ is an unknot. The union of a closed braid and its braid axis is depicted in Figure~\ref{fig:braidex}c).

\vspace{0.2cm}

This can all be made very explicit. If a geometric braid satisfies $|u_j(t)|<1$ for all $t\in[0,2\pi]$ and $j=1,2,\ldots,s$, we can simply project the strands onto $S^3$, so that the corresponding braid closure is given by 
\begin{equation}
\bigcup_{t\in[0,2\pi]}\bigcup_{j=1}^s(u_j(t),\sqrt{1-|u_j(t)|^2}\rme^{\rmi t})\subset S^3\subset\mathbb{C^2}.
\end{equation} 
The curve $\cup_{\chi\in[0,2\pi]}(\rme^{\rmi \chi},0)$ would be an example of a braid axis.

In this article we employ the connection between braids and complex polynomials, or more precisely, between braids and loops in certain spaces of polynomials. 

\vspace{0.2cm}

For a given geometric braid $B$ on $s$ strands, parametrized as in Eq.~(\ref{eq:braidpara}) we can define a 1-parameter family of polynomials $g_t:\mathbb{C}\to\mathbb{C}$, $t\in[0,2\pi]$, via
\begin{equation}
    g_t(u)=\prod_{j=1}^s(u-u_j(t)).
\end{equation}
Note that for every value of $t$, this is a monic complex polynomial of degree $s$ with distinct roots $u_j(t)$, $j=1,2,\ldots,s$, which are precisely the complex coordinates of the $s$ strands of $B$ at height $t$. Therefore, as $t$ varies from 0 to $2\pi$, the roots of $g_t$ trace out the geometric braid $B$.

By definition we have that $g_0=g_{2\pi}$ and hence $g_t$ can be thought of as a loop in the space $X_s$ of monic complex polynomials of degree $s$ with distinct roots. In fact, the fundamental theorem of algebra allows us to go in the other direction: The roots of any loop $g_t$ in $X_s$ trace out a geometric braid on $s$ strands and every geometric braid on $s$ strands is realized as the roots of a loop in $X_s$. Furthermore, isotopy classes of geometric braids form a group, the braid group on $s$ strands, which from this perspective can be interpreted as the fundamental group of $X_s$. Geometrically the group operation is given by concatenation (and rescaling) of two sets of curves with the same endpoints $u_1,u_2,\ldots,u_s$. Isotopies of closed braids in $\C\times S^1$ can be understood as homotopies of loops in $X_s$ that do not fix the basepoint $\prod_{j=1}^s(u-u_j)$.

Given a loop $g_t$ in $X_s$ it is often advantageous to interpret it as a map $g:\mathbb{C}\times S^1\to\mathbb{C}$, 
\begin{equation}
    g(u,\rme^{\rmi t})\defeq g_t(u).
\end{equation}

In previous work by the second author this relation has been employed to construct real polynomial maps with isolated singularities and links of singularities that are closures of certain families of braids \cite{Bode2019,Bode2020,bodesat,bodebraid}. If the strands $u_j(t)$ are given in terms of trigonometric polynomials (a condition that can always be satisfied after a $C^1$-small deformation), the function $g$ above is a polynomial in the complex variable $u$, but also in $\rme^{\rmi t}$ and $\rme^{-\rmi t}$.

We can traverse a given loop $g_t$ in $X_s$ whose roots form the geometric braid $B$ several times and obtain loop $g_{m t}$, $m\in\mathbb{Z}$, in $X_s$ whose roots form $B^m$, the $m$th power of $B$, that is, $m$ copies of $B$ concatenated. Traversing a loop in the opposite direction results in the inverse braid $B^{-1}$. We have $B^{-m}\defeq (B^{-1})^m=(B^m)^{-1}$.

If a geometric braid is of the form $B^2$ for some geometric braid $B$, then all exponents of $\rme^{\rmi t}$ and $\rme^{-\rmi t}$ in the corresponding polynomial $g$ are even. We then define the semiholomorphic, radially weighted homogeneous polynomial $f:\mathbb{C}^2\to\mathbb{C}$ via
\begin{equation}
f(u,r\rme^{\rmi t})=r^{2ks}g\left(\frac{u}{r^{2k}},\rme^{\rmi t}\right),
\end{equation}
with $k$ chosen sufficiently large.

\begin{obs} Two important consequences to point out: 
\begin{itemize}
    \item[(1)]  Since all exponents of $\rme^{\rmi t}$ and $\rme^{-\rmi t}$ in $g$ are even, $f$ can be written as a mixed polynomial with $v=r\rme^{\rmi t}$ and $\bar{v}=r\rme^{-\rmi t}$. Since all roots of $g_t$ are simple for all $t$, the mixed polynomial $f$ has a weakly isolated singularity at the origin.

\item[(2)] The advantage of working with semiholomorphic polynomial becomes apparent now. Since $f$ is holomorphic with respect to $u$, a critical point $(u_*,v_*)\in\C\times\C^*$ would have to satisfy $f_u(u_*,v_*)=0$, which implies $g_u(u_*,\rme^{\rmi t_*})=0$, where $v_*=r_*\rme^{\rmi t_*}$. In other words, the $u$-coordinate of a critical point $(u_*,v_*)$ of $f$ must be a critical point of the complex polynomial $g_{t_*}:\C\to\C$. Since $g_t$ is a polynomial of degree $s$, it has exactly $s-1$ critical points, when counted with multiplicity. We denote them by $c_j(t)$, $j=1,2,\ldots,s-1$. It turns out that the determinant of the $2\times 2$ real Jacobian matrix of $f$ associated to derivatives with respect to $r$ and $t$ is a non-zero multiple of 
\begin{equation}\label{eq:argcrit}
\frac{\partial \arg(g_t(c_j(t)))}{\partial t}.
\end{equation} 
Thus, $f$ has no critical points in $\C\times\C^*$ if the derivative above has no zeros. Geometric and topological interpretations of this fact have been put forward in \cite{bodesat, bodebraid}.
\end{itemize}
\end{obs}

\begin{definition}(\cite{bodesat})A geometric braid $B$ is called {\bf P-fibered} if the corresponding function $g$ induces a fibration via $\arg(g):(\mathbb{C}\times S^1)\backslash B\to S^1$.
A braid is called P-fibered if it can be represented by a P-fibered geometric braid.
\end{definition}
Here $\arg(z)$ maps a non-zero complex number $z=R\rme^{\rmi \chi}$ to its complex argument $\chi$. The term fibration here refers to the absence of any critical points of $\arg(g)$, i.e., there is no point in $(\mathbb{C}\times S^1)\backslash B$ at which all three independent directional derivatives of $\arg(g)$ vanish. Since $g$ is holomorphic with respect to $u$, this condition is equivalent to the non-vanishing of Eq.~(\refeq{eq:argcrit}). Therefore, $f$ as above has an isolated singularity if and only if $B^2$ is a P-fibered geometric braid \cite{Bode2019}. It follows from the definition that a geometric braid $B$ is P-fibered if and only if $B^n$ is a P-fibered geometric braid for all non-zero integers $n$.

Hence, the closure of $B^2$ is a real algebraic link if $B$ is P-fibered. Different constructions in \cite{Bode2019,Bode2020,bodesat,bodebraid} of real algebraic links often amount to proving that a certain family of braids is P-fibered, the best known example among these being the family of homogeneous braids.

\section{INNER NON-DEGENERATE MIXED POLYNOMIALS}\label{section3}
In this section we introduce the notion of an inner non-degenerate mixed function. We study its relation to Oka's non-degeneracy and show that inner non-degenerate mixed polynomials have a weakly isolated singularity.

Let $f:\C^2\to\C$ be a mixed polynomial with Newton polygon $\Gamma_+(f)$, whose boundary has at least one compact 1-face. A 0-dimensional face of the boundary of $\Gamma_+(f)$, that bounds a compact 1-face, is called an \textit{extreme vertex} if it is the boundary of a unique 1-face. Otherwise, we call it a \textit{non-extreme vertex}. Let $\{P_1,P_2,\ldots,P_N\}$ be the sequence of weight vectors corresponding to the compact 1-faces of $\Gamma_+(f)$, ordered as in the previous section. With these notions we consider the following definition.
\begin{definition}\label{Newtoncond}
We say that $f$ has an inner Newton non-degenerate boundary if both of the following conditions hold:
\begin{itemize}
    \item[(i)] the face functions $f_{P_1}$ and $f_{P_N}$ have no critical points in $V_{f_{P_1}} \cap (\C^2\setminus \{v=0\})$ and $V_{f_{P_N}}\cap (\C^2\setminus \{u=0\})$, respectively.
\item[(ii)] on each 1-face and non-extreme vertex $\Delta$, the face function $f_\Delta$ has no critical points in $V_{f_\Delta} \cap (\C^*)^{2}$.
\end{itemize}
\end{definition}
As in the other notions of non-degeneracy, we sometimes refer to $f$ itself as being inner non-degenerate if it has an inner Newton non-degenerate boundary.
\begin{obs}
The following remarks follow directly from this definition:
\item[1.] For the ordered sequence $\{P_1,P_2,\ldots,P_N\}$ of weight vectors corresponding to the compact 1-faces of $\Gamma_+(f)$ and $k_i=\tfrac{p_{i,1}}{p_{i,2}}$ i=1,N, as in the expression (\ref{resc}), consider any positive weight vector $P=(p_1,p_2)$ with $k_{1}\geq \frac{p_{1}}{p_{2}}\geq k_{N}$. Then, Condition (ii) implies that the $P$-relative face function $f_P$ has no critical point in $V_{f_P}\cap (\C^*)^{2}$.  
%\item[1.] if $f$ has a Newton almost non-degenerate boundary, then $f$ has a weakly isolated singularity at the origin, moreover $f_{\Gamma}$ has a weakly isolated singularity as well.
%\item[2.] the Condition (i) is necessary for a boundary polynomial with weakly isolated singularity at origin.
\item[2.] The Condition (ii) is the Newton non-degeneracy condition as in \cite[ Definition 3]{Oka2011}, except that on the extreme vertices the condition may fail.
%\item[2.] The Condition (ii) is the Newton non-degeneracy condition (as \cite[ Definition 3]{Oka2011}) for all face functions $f_{\Delta}$ along all 1-faces and non-extreme 0-faces of $\Gamma(f)$.
\item[3.] If $f$ is holomorphic, Definition~\ref{Newtoncond} is equivalent to the definition of weakly inner non-degenerate polynomials \cite{stevens}, which is equivalent to partial non-degeneracy in \cite{mondal} and \cite{wall} for germ of functions $(\C^{2},0)\to (\C,0)$, and called inner non-degeneracy in \cite{boubakri}.
\end{obs}

\begin{prop}\label{lem:inner}
Let $f$ be a Newton non-degenerate, convenient mixed polynomial. Then $f$ is inner non-degenerate.
\end{prop}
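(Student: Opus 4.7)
The plan is to verify the two conditions of Definition~\ref{Newtoncond} separately. Condition (ii) is automatic: Newton non-degeneracy asserts that $V_{f_\Delta}\cap \Sigma_{f_\Delta}\cap (\C^*)^{2}=\emptyset$ for \emph{every} compact face $\Delta$ of $\Gamma_+(f)$, which is strictly stronger than what (ii) asks for, since the latter restricts to 1-faces and non-extreme vertices only. So all real work lies in establishing condition (i).

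By the symmetry $u\leftrightarrow v$, it suffices to treat $f_{P_1}$. Newton non-degeneracy applied to the 1-face $\Delta(P_1)$ immediately rules out critical points of $f_{P_1}$ in $V_{f_{P_1}}\cap (\C^*)^{2}$, so the only locus left to examine is $\{u=0\}\cap (\C\times \C^*)$. Here $v$-convenience enters decisively: it forces the upper extreme vertex of $\Delta(P_1)$ to lie on the vertical axis at a lattice point $(0,a)$ with $a\geq 1$, and this vertex is itself a compact 0-face of $\Gamma_+(f)$ to which Newton non-degeneracy applies.

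The key computation is to restrict $f_{P_1}$ to $u=\bar u=0$. Every monomial $c_{\nu,\mu}u^{\nu_{1}}\bar u^{\mu_{1}}v^{\nu_{2}}\bar v^{\mu_{2}}$ with $\nu_{1}+\mu_{1}\geq 1$ vanishes on $\{u=0\}$ together with its $v$- and $\bar v$-derivatives, so only the monomials of $f_{(0,a)}$ survive. Thus $f_{P_1}(0,v)=f_{(0,a)}(v,\bar v)$, $f_{P_1,v}(0,v)=f_{(0,a),v}(v,\bar v)$ and $f_{P_1,\bar v}(0,v)=f_{(0,a),\bar v}(v,\bar v)$. Consequently, at a point $(0,v_{0})$ with $v_{0}\in \C^{*}$ and $f_{P_1}(0,v_{0})=0$, the membership $(0,v_{0})\in \Sigma_{f_{P_1}}$ would force, via $s_{3,f_{P_1}}=0$, the equality $|f_{(0,a),v}(v_{0})|=|f_{(0,a),\bar v}(v_{0})|$.

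Finally, apply Newton non-degeneracy to the 0-face $(0,a)$. Since $f_{(0,a)}$ is independent of $u,\bar u$, the equations $s_{1}$ and $s_{2}$ are trivially satisfied everywhere, so $V_{f_{(0,a)}}\cap \Sigma_{f_{(0,a)}}\cap (\C^{*})^{2}=\emptyset$ is exactly the assertion that no $v_{0}\in \C^{*}$ satisfies simultaneously $f_{(0,a)}(v_{0})=0$ and $|f_{(0,a),v}(v_{0})|=|f_{(0,a),\bar v}(v_{0})|$. This contradicts the necessary condition derived above, giving (i) for $P_1$, and the $P_N$ case is dealt with identically via $u$-convenience, the vertex $(b,0)$, and $s_{2}$ in place of $s_{3}$. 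The only genuine obstacle is bookkeeping---carefully tracking which monomials of $f_{P_1}$ survive at $\{u=0\}$ and matching the reduced critical-set equation to the condition supplied by Newton non-degeneracy at the extreme vertex. Convenience is indispensable here, since without it the extreme vertex need not lie on a coordinate axis, and the above reduction would simply not take place.
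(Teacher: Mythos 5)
Your proof is correct and follows essentially the same route as the paper: condition (ii) is immediate from Newton non-degeneracy, and for condition (i) the non-degeneracy of the 1-face $\Delta(P_1)$ forces any offending critical point onto $\{u=0\}$, where convenience supplies the extreme vertex $(0,a)$ on the vertical axis and Newton non-degeneracy of $f_{(0,a)}$ yields the contradiction. The only (harmless) difference lies in the final step: you transfer the equation $s_{3,f_{P_1}}(0,v_0)=0$ down to $s_{3,f_{(0,a)}}(v_0)=0$, whereas the paper writes $f_{(0,a)}=r^{k}\Phi(\rme^{\rmi t})$ and observes that its real Jacobian is rank-deficient at \emph{every} zero with $v\neq 0$, so that only $f_{P_1}(0,v_0)=0$ (and not the criticality of $(0,v_0)$) is actually needed to produce the contradiction.
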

\begin{proof}
Suppose that $f$ is inner degenerate, that is, not inner non-degenerate. Since $f$ is Newton non-degenerate, Condition (ii) in Definition~\ref{Newtoncond} is satisfied and hence Condition (i) in Definition~\ref{Newtoncond} is not satisfied. Therefore, there exists a critical point of $f_{P_1}$ in $V_{f_{P_1}}\cap \mathbb{C}^2\backslash\{v=0\}$ or a critical point of $f_{P_N}$ in $V_{f_{P_N}}\cap \mathbb{C}^2\backslash\{u=0\}$. We assume the former holds true. The argument in the latter case is analogous.

Let $(u_*,v_*)\in V_{f_{P_1}}\cap \mathbb{C}^2\backslash\{v=0\}$ be a critical point of $f_{P_1}$. Since $f$ is non-degenerate, $f_{P_1}$ has no critical point in $V_{f_{P_1}}\cap \mathbb{C}^{*2}$ and hence $u_*=0$. Since $f$ is convenient, its Newton boundary intersects the vertical axis and we call the corresponding vertex $\Delta$. Since $v_*\neq 0$, we write $f_{\Delta}$ in a neighbourhood of $v_*$ as $r^k\Phi(\rme^{\rmi t})$ for some $\Phi:S^1\to\mathbb{C}$, as usual writing the complex variable $v$ as $r\rme^{\rmi t}$. Note that $f_{\Delta}(u,v)=f_{P_1}(0,v)$ for all $u\in\mathbb{C}$.

With $v_*\neq 0$ we have that $f_{\Delta}(u,v_*)=f_{P_1}(0,v_*)=0$ implies that $\Phi(\rme^{\rmi t_*})=0$. We calculate the real Jacobian matrix of $f_{\Delta}$ at $(u,v_*)=(u,r_*\rme^{\rmi t_*})$:
\begin{align}
Df_{\Delta}(u,v_*)&=
\begin{pmatrix}
\frac{\partial \text{Re}(f_{\Delta})}{\partial \text{Re}(u)}(u,v_*) &\frac{\partial \text{Re}(f_{\Delta})}{\partial \text{Im}(u)}(u,v_*) &\frac{\partial \text{Re}(f_{\Delta})}{\partial r}(u,v_*) &\frac{\partial \text{Re}(f_{\Delta})}{\partial t}(u,v_*)\\
\frac{\partial \text{Im}(f_{\Delta})}{\partial \text{Re}(u)}(u,v_*) &\frac{\partial \text{Im}(f_{\Delta})}{\partial \text{Im}(u)}(u,v_*) &\frac{\partial \text{Im}(f_{\Delta})}{\partial r}(u,v_*) &\frac{\partial \text{Im}(f_{\Delta})}{\partial t}(u,v_*)
\end{pmatrix}\nonumber\\
&=\begin{pmatrix}
0 & 0 & kr_*^{k-1}\text{Re}(\Phi)(\rme^{\rmi t_*}) & r_*^k\frac{\partial \text{Re}(\Phi)}{\partial t}(\rme^{\rmi t_*})\\
0 & 0 & kr_*^{k-1}\text{Im}(\Phi)(\rme^{\rmi t_*}) & r_*^k\frac{\partial \text{Im}(\Phi)}{\partial t}(\rme^{\rmi t_*})
\end{pmatrix}\nonumber\\
&=\begin{pmatrix}
0 & 0 & 0 & r_*^k\frac{\partial \text{Re}(\Phi)}{\partial t}(\rme^{\rmi t_*})\\
0 & 0 & 0 & r_*^k\frac{\partial \text{Im}(\Phi)}{\partial t}(\rme^{\rmi t_*})
\end{pmatrix},
\end{align}
which follows from the fact that $f_{\Delta}$ does not depend on $\text{Re}(u)$ or $\text{Im}(u)$ and $\Phi(\rme^{\rmi t_*})=0$. Since $Df_{\Delta}$ does not have full rank, $(u,v_*)$ is a critical point of $f_{\Delta}$ for any complex number $u$. In particular choosing $u\neq 0$ results in a critical point of $f_{\Delta}$ in $V_{f_{\Delta}}\cap(\mathbb{C}^*)^{2}$, which contradicts the Newton non-degeneracy of $f$. 
\end{proof}

The convenience condition in Proposition~\ref{lem:inner} cannot be dropped, since $u^4-u^2v^3$ is an example of an inconvenient Newton non-degenerate polynomial that is inner degenerate. There are inconvenient inner non-degenerate polynomials, but their extreme vertices can only have distance 1 from the coordinate axis.

The following example illustrates that there are inner non-degenerate mixed polynomials that are Newton degenerate.
\begin{ex}\label{ex2}

We consider again the boundary polynomial $$f(u,v,\bar{v})= u^8+v^3u^2+\bar{v}^5u-2(v^7+\bar{v}^7)$$ from Example~\ref{ex1}. The vertices $\Delta_1$ and $\Delta_3$ are the extreme vertices of $f$ and $\Delta_2=\Delta(P_1)\cap \Delta(P_2)$ is the unique non-extreme vertex. 
\vspace{0.3cm}

Since $f$ is semiholomorphic, the intersection of the singular set and the variety of $f$ is given by the solution to $f_u=s_{3,f}=f=0$. 
\vspace{0.3cm}

We represent the face functions $f_{P_1}$ and $f_{P_2}$ as in Eq.~(\ref{resc}) as follows. As usual we write $v=re^{it}$. Associated to the weight vector $P_1$ we have $k_1=\tfrac{p_{1,1}}{p_{1,2}}=2,\ s_1=2$ and $n_1=3$. Then Eq.~\eqref{resc} takes the form 
\begin{align}
f_{P_1}(z,\bar{z})&=v^3u^2+\bar{v}^5u-2(v^7+\bar{v}^7)\nonumber\\
&= r^7 \left( e^{3it}\left( \frac{u}{r^2}\right)^2+ e^{-5it}\left(\frac{u}{r^2} \right)-2(e^{7it}+e^{-7it})  \right),
\end{align} 
so that
\begin{equation}
g_1(u,\rme^{\rmi t})= e^{3it}u^2+ e^{-5it}u-2(e^{7it}+e^{-7it})  
\end{equation}

Associated to weight vector $P_2$ we have $k_2=\tfrac{p_{2,1}}{p_{2,2}}=\tfrac{1}{2},\ s_2=8$ and $n_2=0$. Then Eq.~\eqref{resc} takes the form 
\begin{align}
f_{P_2}(z,\bar{z})&=u^8+v^3u^2\nonumber\\
&=r^{4}\left(\left( \frac{u}{r^{1/2}}\right)^8+ e^{3it}\left(\frac{u}{r^{1/2}} \right)^2 \right),
\end{align}
so that 
\begin{equation}
g_2(u,\rme^{\rmi t})=u^8+ e^{3it}u^2.
\end{equation}
  
Let $c(t)=-\tfrac{1}{2}\rme^{-8\rmi t}$ denote the critical point of $g_1(\cdot,\rme^{\rmi t})$. Since $g_1(c(t),\rme^{\rmi t})\neq 0$ for all $t\in[0,2\pi]$, there is no critical point $(u_*,v_*)$ of $f_{P_1}$ with $v_*\neq0$ and $f_{P_1}(u_*,v_*)=0$. Since $f_{P_2}$ is holomorphic, we can easily check that all of its critical points $(u_*,v_*)$ satisfy $u_*=0$. Hence, $f$ satisfies Condition (i) in Definition~\ref{Newtoncond}.
  % To show Condition (i), we have that $f_{P_1}$ and $f_{P_2}$ has no critical points in $V_{f_{P_1}}\cap ( \C^2 \setminus \{v=0\})$ and $V_{f_{P_2}}\cap (\C^2 \setminus \{u=0\})$, respectively, to prove that see that $g_1$ and $g_2$ has non-vanished critical values in variable $u$ (see \cite{Bode2019,Araujo_Sanchez2021} to see more about critical values of semiholomorphic polynomials) 
  
%  The Condition (i) follows  from $g_1$ and $g_2$ has non-vanished critical values in variable $u$ from critical points in $\C^{2}\setminus \{u=0\}$ and $\C^{2}\setminus \{v=0\}$, respectively (see \cite{Bode2019,Araujo_Sanchez2021} to see more about critical values of semiholomorphic polynomials). 
\vspace{0.3cm}

To prove Condition (ii) we only need to show that $f_{\Delta_2}(u,v,\bar{v})=v^3u^2$ has no critical points in $(\C^*)^{2}$, which is trivial.
\vspace{0.3cm}

Therefore, $f$ has an inner non-degenerate boundary. However, $f_{\Delta_1}=-2(v^7+\bar{v}^7)$ is degenerate. 
\end{ex}

Recall that $d(P;f)$ denotes the minimal radial degree of any monomial of $f$ with respect to a weight vector $P$, or in other words the minimal value of $\ell_P$ on $\Gamma(f)$.
\begin{lemma}\label{facefunc}
Let $x\in\{u,\bar{u},v,\bar{v}\}$. Let $f:\C^2\to \C$ be a mixed polynomial which is not $\bar{x}$-semiholomorphic (where we take $\overline{(\bar{u})}=u$ and $\overline{(\bar{v})}=v$), then for every weight vector $P=(p_1,p_2)$ it follows that $d(P;f_x)\geq d(P;f)-p_i$, where $i=1$ if $x\in\{u,\bar{u}\}$ and $i=2$ if $x\in\{v,\bar{v}\}$. Moreover, the following are equivalent:
\begin{itemize}
\item[i)] $d(P;f_x)= d(P;f)-p_i$, where $i=1$ if $x\in\{u,\bar{u}\}$ and $i=2$ if $x\in\{v,\bar{v}\}$.
\item[ii)] $(f_x)_P=(f_P)_x$.
\item[iii)] $f_P$ is not $\bar{x}$-semiholomorphic.
\end{itemize}
%equality holds if and only if $(f_u)_P=(f_P)_u$ if and only if $f_P$ is not $\bar{u}$-semiholomorphic.
\end{lemma}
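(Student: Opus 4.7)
The plan is to reduce everything to monomial bookkeeping. By the obvious symmetry under swapping $u\leftrightarrow v$ or conjugating, I may assume $x=u$, so that $i=1$ and the hypothesis ``$f$ not $\bar{x}$-semiholomorphic'' simply says that $f$ actually depends on $u$, in particular $f_u\neq 0$. The key observation is that $\partial_u$ acts monomial-wise: it sends each $c_{\nu,\mu}u^{\nu_1}v^{\nu_2}\bar{u}^{\mu_1}\bar{v}^{\mu_2}$ either to zero (when $\nu_1=0$) or to a single monomial whose $P$-radial degree is exactly $p_1$ less. Since the output multi-index $(\nu_1-1,\nu_2,\mu_1,\mu_2)$ determines its parent uniquely, no cancellations can occur between images of distinct monomials, and the non-zero monomials of $f_u$ are in bijection with the monomials of $f$ having $\nu_1\geq 1$, each with its radial degree decreased by the constant $p_1$.

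Taking minima then gives
$$d(P;f_u) \;=\; \min\{\mathrm{rdeg}_P(M_{\nu,\mu}): c_{\nu,\mu}\neq 0,\ \nu_1\geq 1\}-p_1 \;\geq\; d(P;f)-p_1,$$
which is the asserted inequality. For the equivalences, equality in the displayed line holds precisely when the inner minimum is attained by a monomial with $\nu_1\geq 1$, i.e.\ when $f_P$ contains at least one monomial depending on $u$; this is exactly the statement that $f_P$ is not $\bar{u}$-semiholomorphic, so (i)$\Leftrightarrow$(iii) drops out for free. I would then deduce (iii)$\Rightarrow$(ii) by noting that, under (iii), $(f_P)_u$ is a non-trivial sum of monomials all of radial degree $d(P;f)-p_1=d(P;f_u)$, and the bijection above identifies them precisely with the monomials of $(f_u)_P$. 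Conversely (ii)$\Rightarrow$(iii) follows by contraposition: if (iii) fails, then $f_P$ is independent of $u$, so $(f_P)_u=0$, while $(f_u)_P\neq 0$ (since $f_u\neq 0$), ruling out (ii).

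I do not foresee any substantive obstacle, as the argument is purely combinatorial. The only point requiring mild attention is the no-cancellation observation underlying the monomial-wise bijection, and the hypothesis that $f$ is not $\bar x$-semiholomorphic is used exactly once, to guarantee $f_x\neq 0$ so that $d(P;f_x)$ and $(f_x)_P$ are well-defined and non-zero.
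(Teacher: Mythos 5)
Your proof is correct and follows essentially the same route as the paper's: both arguments rest on the observation that differentiation by $x$ lowers the $P$-radial degree of each monomial depending on $x$ by exactly $p_i$ and kills the rest, and then compare minimal radial degrees. The paper organizes this via the decomposition $f=f_P+f'$ with $d(P;f')>d(P;f)$ rather than via your explicit monomial bijection, but that is a presentational difference only (and your making the no-cancellation point explicit is a small plus).
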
 
\begin{proof}
As in the statement of the lemma we set $i=1$ if $x\in\{u,\bar{u}\}$ and $i=2$ if $x\in\{v,\bar{v}\}$.
Taking the derivative with respect to $x$ reduces the degree of a monomial $M_{\nu,\mu}=c_{\nu,\mu}z^{\nu}\bar{z}^{\mu}$ with respect to the variable $x$ by one if $M_{\nu,\mu}$ depends on $x$, i.e., $\deg_x (M_{\nu,\mu})_x=\deg_x M_{\nu,\mu}-1$ if $\nu_1>0$. Therefore, the radial degree $rdeg_P(M_{\nu,\mu})$ of $M_{\nu,\mu}$ decreases by $p_i$ under differentiation by $x$ if $M_{\nu,\mu}$ depends on $x$.

We may write $f=f_P+f'$ for some mixed function $f'$ with $d(P;f')>d(P;f_P)=d(P;f)$. Clearly this implies $f_x=(f_P)_x+f'_x$ and $d(P;f_x)=\min\{d(P;(f_P)_x),d(P;f'_x)\}$. 

Suppose that $f_P$ is not $\bar{x}$-semihololmorphic. If $f'$ does not depend on $x$, we have $f_x=(f_P)_x$ and since $\ell_P$ is constant on $(f_P)_x$, we have $(f_x)_P=f_x=(f_P)_x$ and 
\begin{equation}
d(P;f_x)=d(P;(f_P)_x)=d(P;f_P)-p_i=d(P;f)-p_i.
\end{equation} 
If $f$ depends on $x$, then 
\begin{equation}
d(P;(f_P)_x)=d(P;f_P)-p_i<d(P;f')-p_i=d(P;f'_x)
\end{equation}
and hence $(f_x)_P=(f_P)_x$ and again 
\begin{equation}
d(P;f_x)=d(P;(f_P)_x)=d(P;f_P)-p_i=d(P;f)-p_i.
\end{equation} 
This proves that $iii)$ implies $ii)$ and $i)$.

Suppose that $f_P$ is $\bar{x}$-semiholomorphic. Then $f_x=f'_x$. Since $f$ itself is not $\bar{x}$-semiholomorphic, $f_x$ and $(f_x)_P$ are not constant zero, which implies that $(f_x)_P\neq(f_P)_x=0$ and 
\begin{equation}
d(P;f_x)=d(P;f'_x)=d(P;f')-p_i>d(P;f_P)-p_1=d(P;f)-p_i.
\end{equation} 
Hence, $i)$ and $ii)$ independently imply $iii)$. It also shows that if $d(P;f_x)\neq d(P;f)-p_i$, then $d(P;f_x)>d(P;f)-p_i$, which finishes the proof of the lemma.
\end{proof}

\begin{prop}\label{weak-isolated}
Let $f:\C^2\to \C$ be a mixed polynomial with an inner Newton non-degenerate boundary. Then $f$ has a weakly isolated singularity at the origin.
\end{prop}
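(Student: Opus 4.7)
The strategy is proof by contradiction via the real-analytic Curve Selection Lemma. If the origin were not a weakly isolated singularity, then $V_f\cap\Sigma_f\setminus\{0\}$ would accumulate at $0$ and one would obtain a real analytic arc $\gamma:[0,\varepsilon)\to V_f\cap\Sigma_f$ with $\gamma(0)=0$ and $\gamma(\tau)\neq 0$ for $\tau>0$. Writing $\gamma(\tau)=(u(\tau),v(\tau))$ and, after replacing $\tau$ by $\tau^{1/k}$ for a suitable positive integer $k$, one obtains Puiseux expansions $u(\tau)=a\tau^{\alpha}(1+o(1))$ or $u\equiv 0$, and analogously for $v$, with $a,b\in\mathbb{C}^*$ and $\alpha,\beta\in\mathbb{Q}^+$. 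I would then split the argument according to this Puiseux direction.

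Suppose first that both $u,v\not\equiv 0$ and let $P=(p_1,p_2)$ be the coprime positive weight vector proportional to $(\alpha,\beta)$. When $P_N\preceq P\preceq P_1$, the face $\Delta(P)$ is either a compact 1-face $\Delta(P_i)$ or a non-extreme vertex. The standard Oka-type rescaling argument then shows that $(a,b)\in(\mathbb{C}^*)^2$ is a critical point of $f_{\Delta(P)}$ lying on $V_{f_{\Delta(P)}}$: one divides $f(\gamma(\tau))=0$ and each of the equations $s_{i,f}(\gamma(\tau))=0$ by the appropriate power of $\tau$ (whose exponent is read off from Lemma~\ref{facefunc} applied to the partial derivatives of $f$ with respect to the weight $P$) and lets $\tau\to 0$; the leading term yields the face-function analogue at $(a,b)$. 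This contradicts Condition (ii) of inner non-degeneracy.

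The remaining subcases --- namely $u\equiv 0$, or $u,v\not\equiv 0$ with $P\succ P_1$ --- correspond to arcs tangent to the $v$-axis, and I would treat them together by rescaling with $P_1$. Setting $r(\tau)=|v(\tau)|$, $v(\tau)=r(\tau)e^{\rmi t(\tau)}$ and $\tilde u(\tau)=u(\tau)/r(\tau)^{k_1}$ with $k_1=p_{1,1}/p_{1,2}$, one has $\tilde u(\tau)\to 0$ in both subcases (trivially if $u\equiv 0$, and from $\alpha/\beta>k_1$ otherwise), while $e^{\rmi t(\tau)}\to e^{\rmi t_*}$ along a subsequence. Since every monomial in $f-f_{P_1}$ has strictly larger $P_1$-radial degree than $d(P_1;f)$, dividing $f(\gamma(\tau))=0$ by $r(\tau)^{k_1 s_1+n_1}$ and using the explicit formula~\eqref{resc} yields, in the limit, $g_1(0,0,e^{\rmi t_*})=0$, i.e.\ $(0,e^{\rmi t_*})\in V_{f_{P_1}}$. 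Applying the same procedure to the three equations $s_{i,f}=0$ and invoking Lemma~\ref{facefunc} to identify the $P_1$-leading parts of $f_u,f_{\bar u},f_v,f_{\bar v}$ with $\partial f_{P_1}/\partial u$, $\partial f_{P_1}/\partial\bar u$, $\partial f_{P_1}/\partial v$, $\partial f_{P_1}/\partial\bar v$ respectively, one concludes that $(0,e^{\rmi t_*})$ is a critical point of $f_{P_1}$. Since $e^{\rmi t_*}\neq 0$, this contradicts Condition (i) for $f_{P_1}$. The symmetric subcases $v\equiv 0$ or $P\prec P_N$ are handled identically after interchanging the roles of $u$ and $v$, replacing $P_1$ by $P_N$, and using the other half of Condition (i).

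The principal technical obstacle is the last step --- passing the three critical-point equations to the $P_1$-rescaled limit. The subtlety is that when $f_{P_1}$ happens to be semiholomorphic in one of the variables $x\in\{u,\bar u,v,\bar v\}$, the $P_1$-leading part of $f_x$ need not coincide with $(f_{P_1})_x$ (which then vanishes identically). Lemma~\ref{facefunc} captures the exact dichotomy, and in the semiholomorphic case one must verify that the dominant contribution to each $s_{i,f}$ along $\gamma$ still originates from $f_{P_1}$ and vanishes in the limit at $(0,e^{\rmi t_*})$, so that the three critical-point conditions for $f_{P_1}$ at $(0,e^{\rmi t_*})$ are all forced to hold.
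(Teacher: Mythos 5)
Your proposal is correct and follows essentially the same route as the paper: curve selection lemma, a case split on the Puiseux direction of the arc relative to $P_1,\ldots,P_N$, and extraction of the leading order of $f$ and the $s_{i,f}$ along the arc to produce a critical point of the relevant face function on its zero set, contradicting Condition (ii) (interior directions) or Condition (i) (tangential directions); your $P_1$-rescaled limit with $\tilde u\to 0$ is just a uniform repackaging of the paper's coefficient comparison, which there is split into the $v$-convenient system \eqref{conv} and the non-convenient system \eqref{noconv}. The ``technical obstacle'' you flag is exactly what the paper's Lemma~\ref{facefunc} and its ensuing subcase analysis resolve, and it closes as you expect: if $f_{P_1}$ is $\bar{x}$-semiholomorphic then $(f_{P_1})_x\equiv 0$ while $d(P_1;f_x)>d(P_1;f)-p_{1,i}$, so the normalized limit of $f_x$ along $\gamma$ is also $0$ and the corresponding critical-point equation for $f_{P_1}$ at $(0,\rme^{\rmi t_*})$ holds trivially.
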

\begin{proof}
We are going to prove the proposition by contradiction, using the curve selection lemma: If the singularity at the origin is not weakly isolated, then there exists a non-constant analytic curve in $\Sigma_f \cap V_f$, starting at the origin. There are several cases to consider. First, we suppose the curve is of the form $$z(\tau)=(a\tau^{p_1}+h.o.t., b\tau^{p_2}+h.o.t.)\in \Sigma_f \cap V_f \cap (\C^*)^{2}$$
with $a,b\neq 0$ and $k_1\geq\tfrac{p_1}{p_2}\geq k_N,$ for $k_{1},k_{N}$ as in (\ref{resc}). By definition we have $s_{1,f}(z(\tau))=s_{2,f}(z(\tau))=s_{3,f}(z(\tau))=f(z(\tau))=0.$ Let $P=(p_1,p_2)$. As in the proof of the previous lemma, we may express $f$ as the sum of $f_P$ and terms with higher radial degree with respect to $P$. We proceed analogously with $f_x$, $x\in\{u,\bar{u},v,\bar{v}\}$. Thus the four equations $s_{1,f}(z(\tau))=s_{2,f}(z(\tau))=s_{3,f}(z(\tau))=f(z(\tau))=0$ become  
\begin{align}
     ((f_{u})_P(a,b)\cdot (\bar{f}_{\bar{v}})_P(a,b)) \tau^{d(P;f_u)+d(P;f_{\bar{v}})}+h.o.t.&\nonumber =\\ =((f_{v})_P(a,b) \cdot (\bar{f}_{\bar{u}})_P(a,b)&)\tau^{d(P;f_v)+d(P;f_{\bar{u}})}+h.o.t. \label{eqweak1}
\end{align}
\begin{align}
  |(f_{u})_P(a,b)|^2\tau^{2d(P;f_u)}+h.o.t.&=|(f_{\bar{u}})_P(a,b)|^2\tau^{2d(P;f_{\bar{u}})}+h.o.t. \label{eqweak2}  \\
     |(f_{v})_P(a,b)|^2\tau^{2d(P;f_v)}+h.o.t.&= |(f_{\bar{v}})_P(a,b)|^2\tau^{2d(P;f_{\bar{v}})}+h.o.t. \label{eqweak3} \\
     f_P(a,b)\tau^{d(P;f)}+h.o.t.&=0\label{eqweak4},
\end{align}
where the higher order terms come from the higher order terms in $z(\tau)$ and terms of $f$ (or $f_x$, $x\in\{u,\bar{u},v,\bar{v}\}$) with greater radial degree with respect to $P$.
Since $k_1\geq \frac{p_1}{p_2} \geq k_N$, $\Delta(P)$ is not an extreme vertex. Therefore $f_P$ is neither $u$- and $\bar{u}$-semiholomorphic nor $v$- and $\bar{v}$-semiholomorphic. Hence, from Lemma~\ref{facefunc} we have  
\begin{equation*}
\begin{cases} 
&d(P;f_u)> d(P;f_{\bar{u}})=d(P;f)-p_1, \text{ if $f_P$ is $\bar{u}$-semiholomorphic} \\
 &d(P;f_{\bar{u}})> d(P;f_u)=d(P;f)-p_1, \text{ if $f_P$ is $u$-semiholomorphic} \\
 &d(P;f_{\bar{u}})=d(P;f_u)=d(P;f)-p_1, \text{$f_P$ depends on both $u$ and $\bar{u}$}
   \end{cases}
\end{equation*}
and 
\begin{equation*}
\begin{cases} 
&d(P;f_v)> d(f_{P;\bar{v}})=d(P;f)-p_2, \text{ if $f_P$ is $\bar{v}$-semiholomorphic} \\
 &d(P;f_{\bar{v}})> d(P;f_v)=d(P;f)-p_2, \text{ if $f_P$ is $v$-semiholomorphic} \\
 &d(P;f_{\bar{v}})=d(P;f_v)=d(P;f)-p_2, \text{ if $f_P$ depends on both $v$ and $\bar{v}$}
   \end{cases}
\end{equation*}
There are now 9 subcases to consider, corresponding to which of the variables $f_P$ depends on. The calculation is almost identical for these different cases, so that we only present one of them in detail. 

\vspace{0.3cm}

Suppose that $f_P$ is $\bar{u}$-semiholomorphic and depends on both $v$ and $\bar{v}$. Then $d(P;f_u)> d(P;f_{\bar{u}})=d(P;f)-p_1$ and $d(P;f_{\bar{v}})=d(P;f_v)=d(P;f)-p_2$. This means for example that the lowest degree of the left hand side in Eq.~\eqref{eqweak1} is greater than the lowest degree of the right hand side, which is $2d(P;f)-p_1-p_2$. By comparing coefficients of the the lowest degrees from Eq.~\eqref{eqweak1}-\eqref{eqweak4} we deduce 
\begin{align}\label{redusys}
 &(f_{v})_P(a,b) \cdot (\bar{f}_{\bar{u}})_P(a,b)=0 \nonumber\\
 &|(f_{\bar{u}})_P(a,b)|^2=0  \\
 &|(f_{v})_P(a,b)|^2= |(f_{\bar{v}})_P(a,b)|^2 \nonumber\\
 & f_P(a,b)=0 \nonumber
\end{align}
Applying~Lemma~\ref{facefunc} to this case we have $(f_{\bar{u}})_P= (f_P)_{\bar{u}}$, $(f_v)_P= (f_P)_v$ and $(f_{\bar{v}})_P= (f_P)_{\bar{v}}$ thus we have that the system \eqref{redusys} is equal to the system $s_{1,f_P}(a,b)=s_{2,f_P}(a,b)=s_{3,f_P}(a,b)=f_P(a,b)=0,$ which implies that $(a,b)$ belongs to $\Sigma_{f_P}\cap V_{f_P}\cap (\C^*)^{2}$, which is a contradiction of Condition~(ii) in Definition~\ref{Newtoncond}. The other 8 cases work analogously. 

\vspace{0.3cm}

Therefore, the curve $z(\tau)$ has a component that is equal to 0, or $k_1<\tfrac{p_1}{p_2}$ or $k_N>\tfrac{p_1}{p_2}$.   
\vspace{0.3cm}
%Note that if $f$ is boundary polynomial with a Newton almost non-degenerate boundary we have that $f_{x}(0,v,\bar{v})=(f_{P_1})_x(0,v,\bar{v}), x \in \{u,\bar{u},v,\bar{v}\}$ and $f(0,v,\bar{v})=f_{P_1}(0,v,\bar{v})$, moreover theses equations are radially weighted homogeneous, therefore $f$ has no critical points in $(\C^2 \setminus \{v=0\})\cap V_f$ if and only if $f_{P_1}$ has no critical points in $(\C^2 \setminus \{v=0\}) \cap V_{f_{P_1}}$, thus the result follows.  When $f$ is not a boundary polynomial  we have to use 

If $k_1<\tfrac{p_1}{p_2}$, we write $f_{P_1}(z,\bar{z})=A(v,\bar{v})+B(v,\bar{v})u+C(v,\bar{v})\bar{u}+D(z,\bar{z})$ for some functions $A, B, C, D$, where the degree of $D$ with respect to $R=|u|=|\bar{u}|$ is greater than 1. Since $f$ has an inner Newton non-degenerate boundary, $f_{P_1}$ has no critical point in $\{(0,v):v\neq 0\}\cap V_{f_{P_1}}$, which is equivalent to the non-existence of solutions to the system given by:
\begin{equation}\label{almostnondeg}
\begin{cases} 
&B(v,\bar{v}) \overline{A_{\bar{v}}(v,\bar{v})}-\overline{C(v,\bar{v})}A_v(v,\bar{v})  =0, \\
 &|B(v,\bar{v})|^2-|C(v,\bar{v})|^2=0 \\
 &|A_v(v,\bar{v}) |^2-|A_{\bar{v}}(v,\bar{v})|^2=0\\ 
 & A(v,\bar{v})=0
   \end{cases}
\end{equation}
However, writing $f$ as the sum of $f_{P_1}$ and higher order terms (and likewise for $f_x$, $x\in\{u,\bar{u},v,\bar{v}\}$) and evaluating $s_{1,f}$, $s_{2,f}$, $s_{3,f}$ and $f$ on $z(\tau)$ results as in Eq.~\eqref{eqweak1}-\eqref{eqweak4} in a system of equations, whose lowest order terms in $\tau$ imply:
\begin{equation}\label{conv}
\begin{cases} 
&B(b,\bar{b}) \overline{A_{\bar{v}}(b,\bar{b})}-\overline{C(b,\bar{b})}A_v(b,\bar{b})  =0, \\
 &|B(b,\bar{b})|^2-|C(b,\bar{b})|^2=0 \\
 &|A_v(b,\bar{b}) |^2-|A_{\bar{v}}(b,\bar{b})|^2=0\\ 
 & A(b,\bar{b})=0
   \end{cases}
\end{equation}
if $f$ is $v$-convenient. The first equation for example reflects the coefficient of $\tau^{d(P;B)+d(P;A)-p_2}=\tau^{d(P;C)+d(P;A)-p_2}$ of the equation $s_{1,f}(z(\tau))=0$. We obtain the same system of equations from $s_{1,f}(z(\tau))=s_{2,f}(z(\tau))=s_{3,f}(z(\tau))=f(z(\tau))=0$ in the case where $z(\tau)=(0,b\tau^{p_2}+h.o.t.)$.

If $f$ is not $v$-convenient, i.e., $A(v,\bar{v})=0$, then Eq.~\eqref{almostnondeg} becomes simply $|B(v,\bar{v})|^2-|C(v,\bar{v})|^2=0$. In this case, the lowest degrees of Eq.~\eqref{eqweak1}-\eqref{eqweak4} imply
\begin{equation}\label{noconv}
\begin{cases} 
&B(b,\bar{b}) (\overline{B_{\bar{v}}(b,\bar{b})a+C_{\bar{v}}(b,\bar{b})\bar{a} })-(B_{v}(b,\bar{b})a+C_{v}(b,\bar{b})\bar{a})\overline{C(b,\bar{b})}  =0\\
 &|B(b,\bar{b})|^2-|C(b,\bar{b})|^2=0 \\
 &|B_{v}(b,\bar{b})a+C_{v}(b,\bar{b})\bar{a}|^2 -|B_{\bar{v}}(b,\bar{b})a+C_{\bar{v}}(b,\bar{b})\bar{a}|^2=0\\
 & B(b,\bar{b})a+C(b,\bar{b})\bar{a}=0.
   \end{cases}
\end{equation}
If $z(\tau)=(0,b\tau^{p_2}+h.o.t.)$, we obtain the same set of equations above with $a=0$.

Hence, in both cases, convenient and non-convenient, Eq.~\eqref{conv} and Eq.~\eqref{noconv} give a solution $(0,b), b\neq0$ of \eqref{almostnondeg}, which is a contradiction to Condition~(i). Analogously we can use Condition~(i) for $f_{P_N}$ to analyze the case of $z(\tau)=(a\tau^{p_1}+h.o.t.,0)$ or $k_N>\tfrac{p_1}{p_2}$. Therefore, $f$ has a weakly isolated singularity.  
\end{proof}

\section{SEMIHOLOMORPHIC PRINCIPAL PARTS}\label{section4}
In this section we are going to study the topology of inner non-degenerate mixed polynomials whose principal part is semiholomorphic. In this case, we can explicitly describe the link of the singularity via braids that correspond to the zeros of the individual face functions.

\begin{definition}
\label{def:russiandoll}
Let $(B_1,B_2,\ldots,B_N)$ be a sequence of geometric braids on $S_i$ strands, where $B_i$ is parametrized by
\begin{equation}
\bigcup_{j=1}^{S_i}\bigcup_{t\in[0,2\pi]}\{(u_{i,j}(t),t)\}\subset\mathbb{C}\times[0,2\pi],
\end{equation}
$u_{i,j}(t)\neq 0$ for all $j$, $t$ and $i\neq1$, i.e., $B_i$ with $i>1$ is affine. We define the braid $B(B_1,B_2,\ldots,B_N)$ to be the braid given by
\begin{equation}
\bigcup_{i=1}^N\bigcup_{j=1}^{S_i}\bigcup_{t\in[0,2\pi]}\{(\varepsilon^{k_i} u_{i,j}(t),t)\}\subset\mathbb{C}\times[0,2\pi],
\end{equation}
for some sufficiently small $\varepsilon>0$ and a sequence of strictly decreasing positive real numbers $k_i$, $i=1,2,\ldots,N$.

We denote by $B^o(B_1,B_2,\ldots,B_N)$ the link given by the union of the closed braid $B(B_1,B_2,\ldots,B_N)$ and its braid axis
\begin{equation}
    \bigcup_{\varphi\in[0,2\pi]}\{(\rme^{\rmi \varphi},\pi)\}\subset\mathbb{C}\times[0,2\pi].
\end{equation}
\end{definition}
Note that the braid type of $B(B_1,B_2,\ldots,B_N)$ does not depend on $\varepsilon$ as long as it is chosen sufficiently small and does not depend on the chosen sequence $k_i$ as long as it is strictly decreasing and positive. See Figure (\ref{fig:braids}) for an illustration of an example with $N=3$.

\begin{figure}\label{NestedBraid(1)}
    \centering
    \includegraphics[height=9cm]{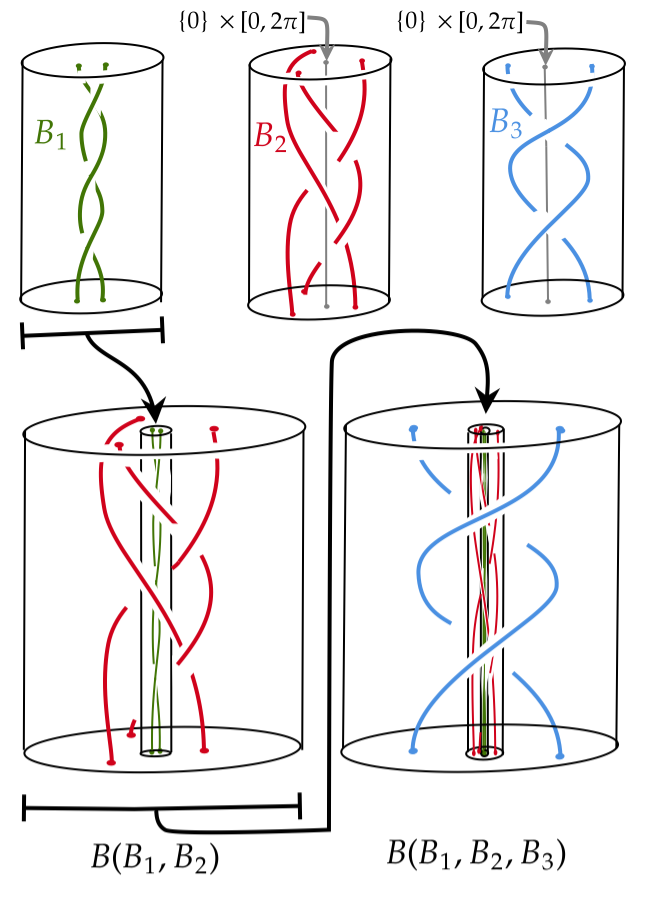}
    \caption{In the top row: a braid $B_{1}$ and two affine braids $B_{2},B_{3}$. In the bottom row: the braids $B(B_1,B_2)$ and $B(B_1,B_2,B_3)$.}
    \label{fig:braids}
\end{figure}

This construction clearly depends on the explicit parametrization of the geometric braids $B_i$. Even an overall translation of the form $u_{i,j}(t)\mapsto u_{i,j}(t)+c$ for some complex number $c$ and all $j\in\{1,2,\ldots,S_i\}$ in a given braid $B_i$ can change the topology of the resulting braid $B(B_1,B_2,\ldots,B_N)$. However, the braid isotopy class of $B(B_1,B_2,\ldots,B_N)$ and the link type of $B^o(B_1,B_2,\ldots,B_N)$ only depend on the braid isotopy class of $B_1$ and the affine braid isotopy classes of the $B_i$s with $i>1$.

Let $f:\mathbb{C}^2\to \mathbb{C}$ be a mixed polynomial and let $\mathcal{P=}\{P_1,P_2,\ldots,P_N\}$ be the sequence of positive weight vectors associated to the compact 1-faces of the Newton boundary of $f$. For each $P_i \in \mathcal{P}$ consider $f_{P_i}$ and $g_{i}$ as in the Eq.~\eqref{resc} where $k_i:=\tfrac{p_{i,1}}{p_{i,2}}$, $n_i$ is the smallest exponent of $r=|v|=|\bar{v}|$ in $f_{P_i}$ and $s_i$ is the largest exponent of $R:=|u|=|\bar{u}|$ in $f_{P_i}$.
\begin{lemma}\label{facedecom}
For every $i=1,2,\ldots,N$ the function $f$ admits the decomposition
\begin{equation}\label{decomp}
f(u,\bar{u},r\rme^{\rmi t},r\rme^{-\rmi t})= r^{k_i s_i+n_i}f_i\left(\frac{u}{r^{k_i}},\frac{\bar{u}}{r^{k_i}},r, t\right),
\end{equation}
where $f_{i}$ is a $r$-parameter deformation of $g_{i},$ that is, the difference $f_i(u,\bar{u},r,t)-g_i(u,\bar{u},\rme^{\rmi t})$ goes to $0,$ as $r\to 0.$ 
\end{lemma}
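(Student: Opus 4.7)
The plan is to carry out a direct computation, splitting $f$ into its $P_i$-face part $f_{P_i}$ and a remainder consisting of monomials lying strictly above the face $\Delta(P_i)$, and then checking that the required factorization is automatic for $f_{P_i}$ while the remainder contributes only terms that vanish at $r=0$.

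First I would write $f = f_{P_i} + f^{(i)}$, where
$$f^{(i)}(z,\bar z) = \sum_{\substack{\nu+\mu\in\mathrm{supp}(f)\\ \ell_{P_i}(\nu+\mu)>d(P_i;f)}} c_{\nu,\mu}\, z^\nu \bar z^\mu.$$
For the face part, Eq.~\eqref{resc} is precisely the statement
$$f_{P_i}(u,\bar u, re^{\rmi t}, re^{-\rmi t}) = r^{k_is_i+n_i}\, g_i\!\left(\tfrac{u}{r^{k_i}},\tfrac{\bar u}{r^{k_i}}, e^{\rmi t}\right).$$
The key observation is that $k_is_i + n_i = d(P_i;f)/p_{i,2}$: any monomial $c_{\nu,\mu}z^\nu\bar z^\mu$ on $\Delta(P_i)$ satisfies $p_{i,1}(\nu_1+\mu_1)+p_{i,2}(\nu_2+\mu_2)=d(P_i;f)$, and choosing such a monomial with maximal $u$-total-degree $s_i$ gives $v$-total-degree $n_i$ and thus the identity.

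Next I would handle the remainder monomial by monomial. Substituting $v=re^{\rmi t}$, $\bar v = re^{-\rmi t}$ and then $u = r^{k_i}\tilde u$, $\bar u = r^{k_i}\tilde{\bar u}$ into a general monomial gives
$$c_{\nu,\mu}\, u^{\nu_1}\bar u^{\mu_1} (re^{\rmi t})^{\nu_2}(re^{-\rmi t})^{\mu_2}
= c_{\nu,\mu}\, r^{k_i(\nu_1+\mu_1)+\nu_2+\mu_2}\, \tilde u^{\nu_1}\tilde{\bar u}^{\mu_1}\, e^{\rmi(\nu_2-\mu_2)t}.$$
The exponent of $r$ equals $\mathrm{rdeg}_{P_i}(M_{\nu,\mu})/p_{i,2}$, so for monomials on $\Delta(P_i)$ it equals $d(P_i;f)/p_{i,2}=k_is_i+n_i$, while for monomials in $f^{(i)}$ it is strictly larger. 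Therefore, after defining
$$f_i(u,\bar u,r,t) := \frac{f(r^{k_i}u,\, r^{k_i}\bar u,\, re^{\rmi t},\, re^{-\rmi t})}{r^{k_is_i+n_i}},$$
one obtains the decomposition \eqref{decomp} by construction, and
$$f_i(u,\bar u,r,t) = g_i(u,\bar u,e^{\rmi t}) + r^{\alpha_1} h_1(u,\bar u,e^{\rmi t}) + \cdots + r^{\alpha_M} h_M(u,\bar u,e^{\rmi t}),$$
with each $\alpha_j>0$ a rational multiple of $1/p_{i,2}$ and each $h_j$ a Laurent polynomial in $e^{\rmi t}$ and a polynomial in $(u,\bar u)$. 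Taking $r\to 0$ then yields $f_i(u,\bar u,r,t)\to g_i(u,\bar u,e^{\rmi t})$, which is exactly the $r$-parameter deformation property claimed.

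This proof is essentially bookkeeping, so there is no genuinely hard step; the only point that needs care is the verification that $k_is_i+n_i=d(P_i;f)/p_{i,2}$ so that the exponent of $r$ extracted from $f_{P_i}$ really is the correct one to factor out of the whole polynomial. Once that is established, the splitting $f=f_{P_i}+f^{(i)}$ together with the strict inequality $\mathrm{rdeg}_{P_i}(M_{\nu,\mu})>d(P_i;f)$ on $f^{(i)}$ immediately gives both the decomposition and the vanishing of $f_i - g_i$ as $r\to 0$.
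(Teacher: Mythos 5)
Your proposal is correct and follows essentially the same route as the paper: both define $f_i$ by the rescaling $f_i(u,\bar u,r,t)=r^{-(k_is_i+n_i)}f(r^{k_i}u,r^{k_i}\bar u,r\rme^{\rmi t},r\rme^{-\rmi t})$ and verify monomial by monomial that every term above the face $\Delta(P_i)$ acquires a strictly positive power of $r$, so that $f_i-g_i\to 0$ as $r\to 0$. Your explicit check that $k_is_i+n_i=d(P_i;f)/p_{i,2}$ is the same normalization the paper uses implicitly via Eq.~(\ref{resc}).
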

\begin{proof}
%Since $f$ is a boundary polynomial, we can write $f$ as
%\begin{equation}
%\label{eq:ffpi}
%f(u,v,\bar{v})=\sum_{\nu+\mu \in \Gamma_f} c_{\nu,\mu} z^\nu \bar{z}^\mu.
%\end{equation}
%where $k_i=\tfrac{p_{i,1}}{p_{i,2}}$, $m_i$ is the smallest exponent of $R\defeq|u|$ in $f_{P_i}$, $n_i$ is the smallest exponent of $r\defeq|v|$ in $f_{P_i}$ and $s_i$ is the largest exponent of $R$ in $f_{P_i}$.

Fix $P_i\in \mathcal{P}$ and as in \eqref{resc} consider  $g_i:\mathbb{C}\times S^1\to\mathbb{C}$ so that 
\begin{equation}\label{decomp2}
f_{P_i}(u,\bar{u},r\rme^{\rmi t},r\rme^{-\rmi t})=r^{k_i s_i+n_i}g_i\left(\frac{u}{r^{k_i}},\frac{\bar{u}}{r^{k_i}},\rme^{\rmi t}\right).
\end{equation}

Let $n_i$ be the smallest exponent of $r\defeq|v|=|\bar{v}|$ in $f_{P_i}$ and $s_i$ the largest exponent of $R=|u|$ in $f_{P_i}$. Defining $f_i:\mathbb{C}\times \R_{\geq 0}\times [0,2\pi]\to\mathbb{C}$ via Eq.~\eqref{decomp} results in
\begin{equation}
f_i(w,\bar{w},r,t)=g_i\left(w,\bar{w},\rme^{\rmi t}\right)+\sum_{\nu+\mu \notin \Delta(P_i,f)}r^{-k_i s_i-n_i}M_{\nu,\mu}(r^{k_i}w,r^{k_i}\bar{w},r\rme^{\rmi t},r\rme^{-\rmi t})
%r^{k_j s_j-k_i s_i}u^{m_j-m_i}g_j\left(\frac{u}{r^{k_j-k_i}},r\rme^{\rmi t},r\rme^{-\rmi t}\right)
\end{equation}
where $g_{i}$ comes from \eqref{decomp2}, and the sum on the right side runs over all monomials $M_{\nu,\mu}$ of $f$ with $rdeg_{P_i}(M_{\nu,\mu})>d(P_i;f)$.
%Now we can consider $f_i:\mathbb{C}\times \R_{\geq 0}\times [0,2\pi]\to\mathbb{C}$ defined via.
%\begin{equation}
%f(u,\bar{u},r\rme^{\rmi t},r\rme^{-\rmi t})= r^{k_i s_i+n_i}f_i\left(\frac{u}{r^{k_i}},\frac{\bar{u}}{r^{k_i}},r, t\right).
%\end{equation}
%Note that $f_i(u_*,r_*,t_*)=0$ if and only if $f(r_*^{k_i} u_*,r_*\rme^{\rmi t_*},r_*\rme^{-\rmi t_*})=0$, with $(u_*,r_*\rme^{\rmi t_*})\in\mathbb{C}^{*2}$. 
%Note that since $f$ is convenient, $(0,0)$ is the only zero of $f$ with $u=0$ or $v=0$ in some neighbourhood of the origin.

It is important to remember that even though $r$ and $t$ are initially interpreted as the modulus and argument of the complex variable $v$, they are considered here as independent parameters; that is, even when $r=0$ the variable $t$ may still vary from 0 to $2\pi$.

\vspace{0.2cm}

In order to prove the property that $f_i(u,\bar{u},t)-g_i(u,\bar{u},\rme^{\rmi t})$ goes to 0 as $r$ goes to 0 we write $M_{\nu,\mu}(R e^{i\varphi},R e^{-i\varphi},r\rme^{\rmi t},r\rme^{-\rmi t})$ as $\phi(t,\varphi)R^a r^{b}$ for some function $\phi(t,\varphi)$. It follows from $rdeg_{P_i}(M_{\nu,\mu})>d(P_i;f)$ that $ak_i+b>k_i s_i+n_i$, which is equivalent to
\begin{equation}\label{restdec}
ak_i+b-k_i s_i-n_i>0.
\end{equation}
Note that the left side of Eq.~\eqref{restdec} is the degree of the variable $r$ of the monomial $$r^{-k_i s_i-n_i}M_{\nu,\mu}(r^{k_i}w,r^{k_i}\bar{w},r\rme^{\rmi t},r\rme^{-\rmi t}),$$ and hence it is precisely the condition for this monomial to be divisible by $r$, so that $f_i(u,\bar{u},0,t)=g_i(u,\bar{u},\rme^{\rmi t})$. Therefore, we can regard $f_i$ as a $r$-parameter deformation of $g_i$ whose difference $f_i(u,\bar{u},t)-g_i(u,\bar{u},\rme^{\rmi t}) \to 0$ as $r \to 0,$ for any value of $t\in[0,2\pi]$.
\end{proof}

\begin{lemma}\label{lem:vertex}
Let $f$ be a mixed polynomial, whose principal part is semiholomorphic and whose Newton boundary is inner non-degenerate. Then for every non-extreme vertex $\Delta$ and for the extreme vertex $\Delta=\Delta_{N+1}$ there are no zeros of $f_{\Delta}$ in $(\C^*)^2$.
\end{lemma}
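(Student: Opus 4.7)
The plan is to split the analysis by which clause of inner non-degeneracy is used: Condition~(ii) handles the non-extreme vertices, while the extreme vertex $\Delta_{N+1}$ requires Condition~(i) on $f_{P_N}$. Since the principal part is $u$-semiholomorphic, every face function $f_\Delta$ is also $u$-semiholomorphic, so if $\Delta$ sits at the lattice point $(a,b)$ we can write $f_\Delta(u,v,\bar v)=u^a\,h_\Delta(v,\bar v)$ with $h_\Delta$ a polynomial in $v,\bar v$ that is radially homogeneous of degree $b$.

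For a non-extreme vertex $\Delta=(a,b)$, necessarily with $a,b>0$, I would argue by contradiction. Suppose $f_\Delta(u_0,v_0)=0$ with $u_0,v_0\neq 0$, so $h_\Delta(v_0,\bar v_0)=0$. I claim that $(u_0,v_0)$ is automatically in $\Sigma_{f_\Delta}$, violating Condition~(ii). The $u$-semiholomorphicity forces $(f_\Delta)_{\bar u}=0$, and $(f_\Delta)_u$ vanishes at $(u_0,v_0)$ because of the overall factor $h_\Delta$, so $s_{1,f_\Delta}$ and $s_{2,f_\Delta}$ vanish automatically. For $s_{3,f_\Delta}$, I invoke Euler's identity $v\,h_v+\bar v\,h_{\bar v}=b\,h_\Delta$: at a zero with $v_0\neq 0$ this gives $h_v(v_0,\bar v_0)=-(\bar v_0/v_0)\,h_{\bar v}(v_0,\bar v_0)$, and since $|v_0|=|\bar v_0|$ we conclude $|h_v|=|h_{\bar v}|$ at this point, forcing $s_{3,f_\Delta}=0$.

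For the extreme vertex $\Delta_{N+1}=(s_N,n_N)$, Condition~(ii) is unavailable, so I extract information from Condition~(i) on $f_{P_N}$ by probing points $(u,0)$ with $u\neq 0$. If $n_N\geq 2$ every monomial of $f_{P_N}$ vanishes to order at least $2$ in $(v,\bar v)$, so $f_{P_N}(u,0)=0$ and all first partials vanish there, placing $(u,0)$ in $V_{f_{P_N}}\cap\Sigma_{f_{P_N}}\cap(\C^2\setminus\{u=0\})$ and contradicting Condition~(i); hence $n_N\leq 1$. The case $n_N=0$ is immediate since then $f_{\Delta_{N+1}}=c\,u^{s_N}$ is a single monomial. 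The decisive case is $n_N=1$: the only lattice points on $\Delta(P_N)$ with $\nu_2+\mu_2=1$ sit at $\Delta_{N+1}$ itself (the next available lattice point on the edge has $\nu_2+\mu_2\geq 1+p_{N,1}$), so writing $f_{\Delta_{N+1}}=u^{s_N}(\alpha v+\beta\bar v)$ gives $(f_{P_N})_v(u,0)=\alpha u^{s_N}$ and $(f_{P_N})_{\bar v}(u,0)=\beta u^{s_N}$. Criticality at $(u,0)$ then reduces to $|\alpha|=|\beta|$, which is forbidden by Condition~(i); a direct check shows that $\alpha v+\beta\bar v$ has no zeros with $v\neq 0$ when $|\alpha|\neq|\beta|$ or one of the coefficients vanishes. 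The main subtlety is exactly this last case, where one must verify that no other lattice point of $\Delta(P_N)$ interferes with the first-order $v$-derivatives at $v=0$; beyond that bookkeeping, everything follows from Euler's identity and the semiholomorphic structure.
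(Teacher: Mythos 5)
Your argument is correct and follows essentially the same route as the paper's proof: for non-extreme vertices you exploit the factorization $f_\Delta=u^a h_\Delta(v,\bar v)$ with $h_\Delta$ radially homogeneous to show any zero in $(\C^*)^2$ is automatically a critical point violating Condition~(ii) (the paper verifies this via the polar-coordinate Jacobian, you via the $s_{i,f_\Delta}$ equations and Euler's identity — equivalent computations), and for $\Delta_{N+1}$ you derive a critical point of $f_{P_N}$ on $\{v=0\}\setminus\{u=0\}$ contradicting Condition~(i), exactly as the paper does with its real coefficients $a=\alpha+\beta$, $b=\rmi(\alpha-\beta)$. Your write-up is in fact slightly more self-contained, since you prove the claim that the extreme vertex has distance at most $1$ from the horizontal axis, which the paper only asserts in a remark.
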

\begin{proof}
Let $\Delta$ be a non-extreme vertex and suppose that $(u_*,v_*)\in(\C^*)^2$ is a zero of $f_{\Delta}$. Since the principal part of $f$ is semiholomorphic $f_{\Delta}$ is of the form $f_{\Delta}(u,r\rme^{\rmi t},r\rme^{-\rmi t})=u^nr^m\Phi(t)$ for some function $\Phi:S^1\to\mathbb{C}$. Since $(u_*,v_*)=(u_*,r_*\rme^{\rmi t_*})$ is a zero of $f_{\Delta}$, but also $u_*\neq 0$, $v_*\neq0$, we must have $\Phi(t_*)=0$. But then $\tfrac{\partial f_{\Delta}}{\partial u}(u_*,v_*)=\tfrac{\partial f_{\Delta}}{\partial r}(u_*,v_*)=0$ and hence $(u_*,v_*)\in(\C^*)^2$ is a critical point of $f_{\Delta}$ contradicting Condition (ii) in Definition~\ref{Newtoncond}.

For the extreme vertex $\Delta=\Delta_{N+1}$ consider that if $f$ is $u$-convenient, then $f_{\Delta}(u,v,\bar{v})=u^nc$ for some $c\in\C^*$, so that $f_{\Delta}$ has no zeros with $u\neq 0$. If $f$ is not $u$-convenient, then $\Delta$ has distance 1 from the horizontal axis and thus $f_{\Delta}(u,v,\bar{v})=u^n(ax+by)$ for some $a,b\in C$, not both equal to 0. If $(u_*,v_*)=(u_*,(x_*+\rmi y_*))\in(\C^*)^2$ is a zero of $f_{\Delta}$, then $a$ and $b$ are linearly dependent over $\mathbb{R}$. We compute $\tfrac{\partial f_{P_N}}{\partial u}(u,0,0)=0$, $\tfrac{\partial f_{P_N}}{\partial x}=u^na$ and $\tfrac{\partial f_{P_N}}{\partial y}=u^nb$. Since $a$ and $b$ are linearly dependent over $\mathbb{R}$, $u^na$ and $u^nb$ are linearly dependent over $\mathbb{R}$ and hence $(u,0)$ is a critical point of $f_{P_N}$ for all $u$. Since by assumption $f$ is not $u$-convenient, we also have $f_{P_N}(u,0)=0$, which contradicts Condition (i) in Definition~\ref{Newtoncond}.
\end{proof}
Note that for non-extreme vertices $\Delta$ of semiholomorphic principal parts the converse is also true: If there are no zeros of $f_{\Delta}$, then $f_{\Delta}$ is Newton non-degenerate.

We consider now semiholomorphic principal parts of inner non-degenerate mixed polynomials. Recall that  $\mathcal{P}=\{P_1,P_2,\ldots,P_N\}$ is the set of weight vectors associated to compact 1-faces of the Newton boundary. For every $i$ we have a corresponding $g_i$ defined from the face function $f_{P_i}$ via Eq.~\eqref{decomp2}.  For any $P_i=(p_{i,1},p_{i,2})\in \mathcal{P}$ consider $k_i=\tfrac{p_{i,1}}{p_{i,2}},$ $n_i$ and $s_i,$ as in the Lemma~\ref{facedecom} and let $m_i$ be the smallest exponent of $R\defeq|u|$ in $f_{P_i}$.

\begin{lemma}\label{lem:gibraid}
Let $f:\C^2\to\C$ be an inner non-degenerate mixed polynomial, whose principal part is semiholomorphic.  Then the zeros of $g_1(\cdot,\rme^{\rmi t})$ trace out a geometric braid on $s_1$ strands, which we denote by $B_1$, as $t$ varies from 0 to $2\pi$.
For every $i>1$ the non-zero roots of $g_i(\cdot,\rme^{\rmi t})$ trace out a geometric braid on $s_i-m_i$ strands, which we denote by $B_i$, as $t$ varies from 0 to $2\pi$. 
\end{lemma}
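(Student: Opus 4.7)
The plan is to analyze, for each fixed $t \in [0, 2\pi]$, the polynomial $g_i(\cdot, \rme^{\rmi t})$ in the complex variable $u$. Since the Newton principal part of $f$ is semiholomorphic, each face function $f_{P_i}$ does not depend on $\bar{u}$, so $g_i(u, \rme^{\rmi t})$ is genuinely a polynomial in $u$, of degree at most $s_i$ and divisible by $u^{m_i}$. First I would verify that this degree is exactly $s_i$ and, for $i > 1$, that the multiplicity at $u = 0$ is exactly $m_i$, for every value of $t$. The coefficient of $u^{s_i}$ in $g_i$ corresponds, via the rescaling in Eq.~(\ref{resc}), to the face function at the bottom vertex $\Delta_{i+1}$, which is either non-extreme (when $i < N$) or equals $\Delta_{N+1}$ (when $i = N$); either way, Lemma~\ref{lem:vertex} ensures it has no zeros in $(\C^*)^2$ and in particular does not vanish at $v = \rme^{\rmi t}$. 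Similarly, for $i > 1$ the coefficient of $u^{m_i}$ in $g_i$ corresponds to the non-extreme top vertex $\Delta_i$, and Lemma~\ref{lem:vertex} again ensures it is non-zero for every $t$. Writing $g_i = u^{m_i} \tilde g_i$, this shows $\tilde g_i(\cdot, \rme^{\rmi t})$ has exactly $s_i - m_i$ non-zero roots counted with multiplicity.

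The heart of the argument is then to show that these non-zero roots are simple for every $t$. Suppose towards contradiction that $g_i(\cdot, \rme^{\rmi t_*})$ has a double root at some $u_* \neq 0$. Setting $r = 1$ in Eq.~(\ref{resc}), this is equivalent to
\begin{equation*}
f_{P_i}(u_*, \rme^{\rmi t_*}, \rme^{-\rmi t_*}) = 0, \qquad (f_{P_i})_u(u_*, \rme^{\rmi t_*}, \rme^{-\rmi t_*}) = 0.
\end{equation*}
Because $f_{P_i}$ is semiholomorphic, $(f_{P_i})_{\bar u} \equiv 0$, so the equations $s_{1, f_{P_i}} = s_{2, f_{P_i}} = 0$ from Definition~\ref{conjuntosingular} hold automatically at this point. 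To force $s_{3, f_{P_i}} = 0$ as well, I would invoke the Euler relation for the radially weighted homogeneous polynomial $f_{P_i}$ of type $(P_i, d(P_i;f))$, which after using $(f_{P_i})_{\bar u} = 0$ reads
\begin{equation*}
p_{i,1}\, u (f_{P_i})_u + p_{i,2}\bigl(v (f_{P_i})_v + \bar v (f_{P_i})_{\bar v}\bigr) = d(P_i;f)\, f_{P_i}.
\end{equation*}
Evaluated at $(u_*, \rme^{\rmi t_*}, \rme^{-\rmi t_*})$, both the right-hand side and the first term on the left vanish, yielding $\rme^{\rmi t_*} (f_{P_i})_v + \rme^{-\rmi t_*} (f_{P_i})_{\bar v} = 0$. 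Taking moduli gives $|(f_{P_i})_v| = |(f_{P_i})_{\bar v}|$, i.e.\ $s_{3, f_{P_i}} = 0$. Hence $(u_*, \rme^{\rmi t_*})$ would be a critical point of $f_{P_i}$ lying in $V_{f_{P_i}} \cap (\C^*)^2$, contradicting Condition~(ii) of Definition~\ref{Newtoncond}.

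For $i = 1$, the same Euler-relation argument applied to a hypothetical double root at $u_* = 0$ (with $\rme^{\rmi t_*} \neq 0$) produces a critical point of $f_{P_1}$ in $V_{f_{P_1}} \cap (\C^2 \setminus \{v = 0\})$, now contradicting Condition~(i) of Definition~\ref{Newtoncond}. Once the simplicity of all roots (respectively all non-zero roots, for $i > 1$) is established at every $t$, the implicit function theorem guarantees that they depend smoothly on $t$; since $g_i(u, \rme^{\rmi t})$ is $2\pi$-periodic in $t$, the trajectories close up into a geometric braid of the announced size, in $\C \times [0, 2\pi]$ for $i = 1$ and in $(\C \setminus \{0\}) \times [0, 2\pi]$ for $i > 1$. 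The main obstacle is the apparent codimension mismatch: a double root of $g_i$ is cut out by only two equations ($g_i = 0$ and $(g_i)_u = 0$), while a critical point of the mixed polynomial $f_{P_i}$ is cut out by the three equations $s_{1,f_{P_i}}, s_{2,f_{P_i}}, s_{3,f_{P_i}}$; it is precisely the combination of semiholomorphicity (which kills $s_1$ and $s_2$ automatically) and radial weighted homogeneity (which via Euler's relation forces $s_3 = 0$) that bridges this gap.
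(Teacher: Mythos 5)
Your proposal is correct and follows essentially the same route as the paper: fix the degree $s_i$ and the multiplicity $m_i$ at $u=0$ via Lemma~\ref{lem:vertex}, then show that a multiple root of $g_i$ would yield a critical point of $f_{P_i}$ on $V_{f_{P_i}}$, contradicting inner non-degeneracy. The only (cosmetic) difference is how criticality is established: the paper observes that $\partial f_{P_i}/\partial u$ and $\partial f_{P_i}/\partial r$ both vanish along the radial cone through the double root, so the real Jacobian is rank-deficient, while you verify Oka's equations $s_{1}=s_{2}=s_{3}=0$ using the Euler identity --- the same use of radial weighted homogeneity in different coordinates.
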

\begin{proof}
Lemma~\ref{lem:vertex} implies that the degree of $g_i(\cdot,\rme^{\rmi t})$, assigned to $f_{P_{i}}$, $i>1$, as in Eq.~\eqref{decomp2}, is equal to $s_i$ for all values of $t\in[0,2\pi]$ and the number of non-zero roots of $g_i(\cdot,\rme^{\rmi t})$ is equal to $s_i-m_i$ for all values of $t\in[0,2\pi]$. Likewise, the degree of $g_1(\cdot,\rme^{\rmi t})$ is equal to $s_1$ for all $t\in[0,2\pi]$.

We now have to show that all non-zero roots of $g_i(\cdot,\rme^{\rmi t})$, $i>1$, and all roots of $g_1(\cdot,\rme^{\rmi t})$ are simple for all $t\in[0,2\pi]$. This can be seen as follows. Suppose that $\tfrac{\partial g_i}{\partial u}(u_*,\rme^{\rmi t_*})=0$ for some $(u_*,t_*)\in\mathbb{C}^*\times[0,2\pi]$ with $g_i(u_*,\rme^{\rmi t_*},)=0$ (or equivalently $f_{P_i}(r^{k_i}u_*,r\rme^{\rmi t_*},r\rme^{-\rmi t_*})=0$ for all $r\in\mathbb{R}_{\geq 0}$). Then 
\begin{equation}
\tfrac{\partial f_{P_i}}{\partial u}(r^{k_i}u_*,r\rme^{\rmi t_*},r\rme^{-\rmi t_*})=\tfrac{\partial f_{P_i}}{\partial r}(r^{k_i}u_*,r\rme^{\rmi t_*},r\rme^{-\rmi t_*})=0,
\end{equation} 
and hence $Df_{P_i}$ does not have maximal rank at $(r^{k_i}u_*,r\rme^{\rmi t_*})$, which contradicts the assumption that $f$ is inner non-degenerate.

This means that for $i>1$ the non-zero complex roots $u_{i,\ell}(t)$, $\ell=1,2,\ldots,s_i-m_i$, of $g_i(\cdot,\rme^{\rmi t})$ trace out a geometric braid $B_i$ on $s_i-m_i$ strands, when $t$ varies from $0$ to $2\pi$.

The corresponding calculation for $g_1$ and a supposedly non-simple root $u_*\in\C$ of $g_1(\cdot,\rme^{\rmi t_*})$ for some $t_*\in[0,2\pi]$ is identical and leads to a contradiction as well. Hence the roots of $g_1(\cdot,\rme^{\rmi t})$ trace out a geometric braid $B_1$ on $s_1$ strands.
\end{proof}
Note that Lemma~\ref{lem:gibraid} implies that $f$ is true, since $s_i>m_i$ for all $i$. 

We now prove that the link of the weakly isolated singularity of a semiholomorphic boundary polynomial with an inner non-degenerate boundary can be constructed from the $B_is$ as described in Definition~\ref{def:russiandoll}.
%Define $B_1$ as the geometric braid formed by zeros of the function $g_{1}$ and $B_{i}, i=2,\dots, N,$ the geometric braid formed by the non-zero complex roots of the function $g_i(\cdot,\rme^{\rmi t})$, i.e. the strands of $B_i$ are parametrized by $(u_{i,j}(t),t)\subset\C^*\times[0,2\pi]$ with $g_i(u_{i,j}(t),t)=0$ for all $t\in[0,2\pi]$. 

\begin{prop}\label{prop:boundary} Let $f:\mathbb{C}^2\to \mathbb{C}$ be a $u$-semiholomorphic boundary polynomial with an inner non-degenerate boundary. Then $f$ has a weakly isolated singularity at the origin. Moreover, the link $L_f$ is isotopic to the closure of the braid $B(B_1,B_2,\ldots,B_{N})$ if $f$ is $u$-convenient. If $f$ is not $u$-convenient, then the link of the singularity is $B^o(B_1,B_2,\ldots,B_N)$. \end{prop}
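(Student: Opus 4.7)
The plan is as follows. The weakly isolated singularity at the origin is immediate from Proposition~\ref{weak-isolated}, so the content of the statement lies in identifying the link $L_f$ in terms of the braids $B_i$. I would parametrize a neighbourhood of the origin by writing $v=r\rme^{\rmi t}$ and exploiting that $f$ is $u$-semiholomorphic: for each fixed $(r,t)$ with $r>0$, the equation $f(u,r\rme^{\rmi t},r\rme^{-\rmi t})=0$ is a complex polynomial in $u$ of degree $s_N$, having $s_N$ roots (with multiplicity) that depend continuously on $(r,t)$. Up to a standard Milnor-type isotopy of $S^3_\rho$ shrinking $|u|$ relative to $|v|$, the link $L_f$ may be identified, for $\rho$ small enough, with $V_f\cap\{|v|=\rho\}$; using $t=\arg v$ as the $S^1$-coordinate, this sits naturally inside the solid torus $\mathbb{C}\times S^1$ whose core is $\{u=0\}$.

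The main step is then to stratify these $s_N$ roots by the scales $|u|\sim r^{k_i}$ for $i=1,\ldots,N$. In the scale-$k_i$ stratum I rescale $u=r^{k_i}w$ and apply Lemma~\ref{facedecom}, so the equation becomes $f_i(w,\bar{w},r,t)=0$ with $f_i(\cdot,\cdot,r,t)\to g_i(\cdot,\cdot,\rme^{\rmi t})$ as $r\to 0^+$. Lemma~\ref{lem:gibraid} guarantees that the zeros of $g_i(\cdot,\rme^{\rmi t})$ are simple (and non-zero for $i>1$) and trace out the geometric braid $B_i$ as $t\in S^1$. The implicit function theorem, applied uniformly in $t$, then shows that for $r$ small enough the zeros of $f_i(\cdot,\cdot,r,t)$ in the relevant compact region are smooth deformations of the zeros of $g_i$ and sweep out a geometric braid isotopic to $B_i$. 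To rule out zeros of $f$ outside these scale strata, I would note that the intermediate strata between scales $k_{i-1}$ and $k_i$ are controlled by the unique non-extreme vertex $\Delta_i$ shared by $\Delta(P_{i-1})$ and $\Delta(P_i)$; Lemma~\ref{lem:vertex} gives that $f_{\Delta_i}$ has no zeros in $(\mathbb{C}^*)^2$, and a standard dominant-term estimate shows $f$ cannot vanish in that stratum for $r$ small. The top extreme vertex $\Delta_1$ similarly forbids zeros for $|u|/r^{k_1}\to\infty$, while the bottom extreme vertex $\Delta_{N+1}$ produces the dichotomy in the statement: in the $u$-convenient case $f_{\Delta_{N+1}}$ equals $cu^{s_N}$ with $c\neq 0$ and so has no zeros for $u\neq 0$; in the non-$u$-convenient case every monomial of $f$ is divisible by a positive power of $v$ or $\bar{v}$, hence $\{v=0,\,|u|=\rho\}\subset V_f\cap S^3_\rho$ contributes the braid axis of Definition~\ref{def:russiandoll}.

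Finally, to assemble the link I would take $\varepsilon=r$ and $k_i=p_{i,1}/p_{i,2}$ (strictly decreasing and positive by the ordering of $\mathcal{P}$). The zeros at each scale are then approximately $u\approx r^{k_i}u_{i,j}(t)=\varepsilon^{k_i}u_{i,j}(t)$, so the nesting of Definition~\ref{def:russiandoll} is realized on the nose: in the $u$-convenient case one obtains exactly the closure of $B(B_1,\ldots,B_N)$, while in the non-$u$-convenient case adjoining the braid axis gives $B^o(B_1,\ldots,B_N)$. The main technical obstacle is the uniform Rouch\'e/implicit-function estimate that simultaneously locates the zeros of $f$ at each scale stratum and excludes zeros in every intermediate region; this ultimately requires uniform lower bounds on each face function's modulus on appropriate annuli in $\mathbb{C}^*$, uniformly in $t\in S^1$, and it is precisely here that the inner non-degeneracy assumption (absence of critical points on the zero set of each $f_{P_i}$) is used in an essential way.
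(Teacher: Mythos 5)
Your outline is correct and, for most of its length, coincides with the paper's proof: the weak isolatedness from Proposition~\ref{weak-isolated}, the rescaling $u=r^{k_i}w$ via Lemma~\ref{facedecom}, the identification of the limiting zero sets with the braids $B_i$ via Lemma~\ref{lem:gibraid}, the implicit function theorem giving braids $B_i(r)$ isotopic to $B_i$ for small $r$, and the final assembly with $\varepsilon=r$ matching Definition~\ref{def:russiandoll}. Where you genuinely diverge is the step you yourself flag as the main technical obstacle: excluding zeros of $f$ outside the scale strata. You propose a Newton-polygon dominant-term estimate in each intermediate region, controlled by the vertex face functions via Lemma~\ref{lem:vertex}, which would require uniform lower bounds on $|f_\Delta|$ on annuli $|u|\sim r^k$ for $k$ strictly between consecutive $k_i$'s, together with boundary-of-stratum (Rouch\'e) care. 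The paper sidesteps all of this with a one-line counting argument that is special to the semiholomorphic setting: for fixed $v=r\rme^{\rmi t}$ with $r>0$ small, $f(\cdot,v)$ is an honest complex polynomial in $u$ of degree $s_N$ (its leading coefficient is nonvanishing by Lemma~\ref{lem:vertex}), and the construction has already exhibited $s_N$ distinct roots, namely $r^{k_i}u_{i,\ell}(r,t)$ summed over $i$ and $\ell$; the fundamental theorem of algebra then forbids any further zeros, with no estimates needed. Your heavier route is essentially the one the paper is forced to take in the general mixed case (Theorem~\ref{thmmixed}, where it is implemented via the curve selection lemma and the niceness of the vertices rather than via explicit annulus estimates), so it is viable here too, but it buys nothing in the semiholomorphic case and leaves the hardest analysis only sketched. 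If you keep your version, you should also make explicit that the inner non-degeneracy of the extreme vertex adjacent to $\Delta(P_1)$ is what makes a possible $u=0$ strand of $B_1$ a \emph{simple} root of $g_1$ (so the implicit function theorem still applies there), a point the paper addresses separately.
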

\begin{proof}
The fact that $f$ has a weakly isolated singularity follows from the inner non-degeneracy of its Newton principal part by Proposition~\ref{weak-isolated}. Since $f$ is semiholomorphic, the exponents of $|u|$ coincide with the exponents of $u$. The decomposition~\eqref{decomp} of $f$ may be written as
\begin{equation*}
f(u,r\rme^{\rmi t},r\rme^{-\rmi t})= r^{k_i s_i+n_i}f_i\left(\frac{u}{r^{k_i}},r, t\right).
\end{equation*}
For $(u_*,r_*\rme^{\rmi t_*})\in(\C^*)^{2},$ we have that

\vspace{0.2cm}

$f_i(u_*,r_*,t_*)=0$ if and only if $f(r_*^{k_i} u_*,r_*\rme^{\rmi t_*},r_*\rme^{-\rmi t_*})=0.$ 

\vspace{0.2cm}

%\noindent Futhermore, since each $g_i$ associated to $f_{P_i}$ is $u$-semiholomorphic with degree in $u$ strictly greater than $m_i,$ there are non-zero roots of $g_i$ and hence $f$ is true.

%\vspace{0.3cm}

%Now, since each $f_{P_i}$ is Newton non-degenerate, it follows that on the zero locus of $f_{P_i}$ in $(\C^*)^{2}$  the real Jacobian $Df_{P_i}$ has maximal rank. Thus, the non-zero complex roots of $g_i$ are all simple, for all values of $t$. This can be seen as follows. Suppose that $\tfrac{\partial g_i}{\partial u}(u_*,\rme^{\rmi t_*})=0$ for some $(u_*,t_*)\in\mathbb{C}^*\times[0,2\pi]$ with $g_i(u_*,\rme^{\rmi t_*},)=0$ (or equivalently $f_{P_i}(r^{k_i}u_*,r\rme^{\rmi t_*},r\rme^{-\rmi t_*})=0$ for all $r\in\mathbb{R}_{\geq 0}$). Then 
%\begin{equation}
%\tfrac{\partial f_{P_i}}{\partial u}(r^{k_i}u_*,r\rme^{\rmi t_*},r\rme^{-\rmi t_*})=\tfrac{\partial f_{P_i}}{\partial r}(r^{k_i}u_*,r\rme^{\rmi t_*},r\rme^{-\rmi t_*})=0,
%\end{equation} 
%and hence $Df_{P_i}$ does not have maximal rank at $(r^{k_i}u_*,r\rme^{\rmi t_*})$.
%By contradiction this shows that if $f_{P_i}(r^{k_i}u_*,r\rme^{\rmi t_*})=0$, then $\tfrac{\partial g_i}{\partial u}(u_*,t_*)\neq0$.

%\vspace{0.3cm}
We already know that the non-zero complex roots $u_{i,\ell}(t)$, $\ell=1,2,\ldots,s_i-m_i$, $i>1$, of $g_i(\cdot,\rme^{\rmi t})$ trace out a geometric braid $B_i$ on $s_i-m_i$ strands and the roots $u_{1,\ell}$, $\ell=1,2,\ldots,s_1$, of $g_1(\cdot,\rme^{\rmi t})$ trace out $B_1$, when $t$ varies from $0$ to $2\pi$. Thus, we have $f_i(u_{i,\ell}(t),0,t)=0$ and these are the only zeros of $f_1$ at $r=0$ and the only zeros of $f_i$ at $r=0$ with non-vanishing $u$-coordinate. We may apply the implicit function theorem (or, use the continuity of the roots of polynomials in terms of their coefficients) to find disjoint tubular neighborhoods of the braids $B_i$ on $\mathbb{C}^*\times S^1$, on which the roots of $f_i$ remain simple and are parametrized by $r$ and $t$ as long as $r$ is chosen sufficiently small. In particular, for any fixed small enough value of $r$, the roots of $f_i$ form a braid $B_i(r)$ that is isotopic as a closed braid to $B_i$, where varying $r$ yields an explicit isotopy in $\C\times S^1$ starting from $B_i(0)=B_i$ at $r=0$.

\vspace{0.3cm}

Recall that if $B_i(r)$ is parametrized by $(u_{i,\ell}(r,t),t)\subset\C^*\times [0,2\pi]$, i.e., $f_i(u_{i,\ell}(r,t),r,t)=0$, then 
\begin{equation}
\label{eq:semiroots}
f(r^{k_i}u_{i,\ell}(r,t),r\rme^{\rmi t},r\rme^{-\rmi t})=0.
\end{equation} 
By continuity we have that if $r$ is sufficiently small, then 
\begin{equation}
(r^{k_i}u_{i,\ell}(r,t),r\rme^{\rmi t},r\rme^{-\rmi t})\neq (r^{k_j}u_{j,\ell'}(r,t),r\rme^{\rmi t},r\rme^{-\rmi t})
\end{equation} 
for all $i\neq j$ and $\ell=1,2,\ldots,s_i-m_i$, $\ell'=1,2\ldots,s_j-m_j.$ In addition, since $u_{i,\ell}(0,t)=u_{i,\ell}(t)\neq 0$, $\ell=1,2,\ldots,s_i-m_i$, we have by continuity $u_{i,\ell}(r,t)\neq 0$ for all $t$ and sufficiently small $r$. Note also that, even if there is a root $u_{1,\ell}(0,t)=0$, $\ell>s_1-m_1$, the arguments above hold on the vanishing root as well, because the inner non-degeneracy condition of $f$ on the positive weight vectors associated to the extreme vertex implies that  $u_{1,\ell}(t)=0$ is a simple root of $g_1(\cdot,\rme^{\rmi t})$.  

For any fixed value of $r$ and $t$, the polynomial $f$ is a complex polynomial in $u$ of degree $s_N$. Thus, for small values of $r$ we have already found $s_N$ roots of $f(\cdot,r\rme^{\rmi t},r\rme^{-\rmi t})$, $r>0$ via Eq.~\eqref{eq:semiroots}, so that there can be no further zeros of $f$ in a neighbourhood of the origin:
\begin{equation}
V_f\cap D_{\rho}^4\backslash\{(u,0):u\in\C\}=\bigcup_{i,\ell}\bigcup_{t\in[0,2\pi]}\bigcup_{r\in\mathbb{R}_{\geq0}}\{(r^{k_i}u_{i,\ell}(r,t),r\rme^{\rmi t})\in\mathbb{C}^2\}\cap D_{\rho}^4.
\end{equation}

If $f$ is $u$-convenient, then $f(u,0)=u^{s_N}$ (possibly times a constant), so that the origin is the only zero of $f$ with $v=0$.

Therefore, since the complex roots of $f(\cdot,r\rme^{\rmi t})$ are simple for sufficiently small $r>0$, it follows that the origin is a weakly isolated singularity of $f$ and by Definition~\ref{def:russiandoll} the corresponding link is the closure of $B(B_1,B_2,\ldots,B_{N}).$ The isotopy between the projection of $V_f\cap(\mathbb{C}\times \rho S^1)$ on $S_{\rho}^3$ and $V_f\cap S_{\rho}^3$ can be constructed analogously to that one in \cite[Theorem 1]{Dennis_Bode2019}. The number of strands $S_i$ in Definition~\ref{def:russiandoll} equals $s_i-m_i=s_i-s_{i-1}$ with $s_0=0$.

If $f$ is not $u$-convenient, the set $v=0$ is part of $V_f$. Since the Newton boundary of $f$ is inner non-degenerate, these zeros are regular points of $f$ and the origin is still a weakly isolated singularity of $f$. The link of the singularity is formed by the union of the closed braid $B(B_1,B_2,\ldots,B_{N})$ (with the isotopy between the projection of $V_f\cap(\mathbb{C}\times \rho S^1)$ on $S_{\rho}^3$ and $V_f\cap S_{\rho}^3$ as in the previous case) and $v=0$, which is the braid axis for $B(B_1,B_2,\ldots,B_{N})$. By definition this union is $B^o(B_1,B_2,\ldots,B_{N})$.
\end{proof}

The result above is concerned with boundary polynomials. We now prove that adding terms above the Newton boundary does not affect the result, even if the added terms themselves are not semiholomorphic.

\begin{prop}
\label{prop:semi}
Let $f:\mathbb{C}^2\to\mathbb{C}$ be a inner non-degenerate mixed polynomial, whose Newton principal part $f_{\Gamma}$ is semiholomorphic. Then $f$ has a weakly isolated singularity and the link $L_f$ is isotopic to $L_{f_{\Gamma}}$.
\end{prop}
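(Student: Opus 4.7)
The plan is to leverage Proposition~\ref{prop:boundary}, which already gives $L_{f_\Gamma}$ explicitly, and then argue that passing from $f_\Gamma$ to $f$ does not alter the link type. First, weak isolatedness is immediate from Proposition~\ref{weak-isolated}, since the inner non-degeneracy hypothesis is on $f$ itself. What remains is the identification $L_f = L_{f_\Gamma}$.

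My approach is to mimic the strategy of Proposition~\ref{prop:boundary}, but applied to $f$ rather than to its boundary polynomial. For each weight vector $P_i\in\mathcal{P}$, Lemma~\ref{facedecom} furnishes the decomposition
$$f(u,\bar u,r\rme^{\rmi t},r\rme^{-\rmi t}) = r^{k_i s_i + n_i}\, F_i\!\left(\tfrac{u}{r^{k_i}},\tfrac{\bar u}{r^{k_i}},r,t\right),$$
with $F_i(w,\bar w,0,t) = g_i(w,\rme^{\rmi t})$; crucially, since $f_\Gamma$ is semiholomorphic the limit $g_i$ carries no $\bar w$ dependence, exactly as in the setting of Lemma~\ref{lem:gibraid}. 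The simple zeros of $g_i(\cdot,\rme^{\rmi t})$ thus define the braid $B_i$. At any such simple zero, the real Jacobian of $F_i$ with respect to $(\operatorname{Re}w,\operatorname{Im}w)$ equals the non-singular Cauchy--Riemann matrix built from $\partial g_i/\partial w$, and this non-singularity persists for sufficiently small $r>0$. The implicit function theorem then produces smooth branches $w_{i,\ell}(r,t)$ of $F_i^{-1}(0)$ whose rescalings $(r^{k_i}w_{i,\ell}(r,t),r\rme^{\rmi t})$ lie on $V_f$, and for small enough $r$ these branches assemble into a braid $B_i(r)$ isotopic to $B_i$ in $\C\times S^1$ (respectively in $(\C\setminus\{0\})\times S^1$ for $i>1$).

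The main obstacle is showing there are \emph{no other} zeros of $f$ near the origin. In the fully semiholomorphic situation of Proposition~\ref{prop:boundary}, this followed from a degree count of $f(\cdot,r\rme^{\rmi t},r\rme^{-\rmi t})$ as a complex polynomial in $u$, but the non-semiholomorphic corrections in $f-f_\Gamma$ destroy this count. Instead I would argue by contradiction via the curve selection lemma, closely parallel to the proof of Proposition~\ref{weak-isolated}: any additional analytic arc of zeros of $f$ accumulating at the origin would have a leading behaviour along some weight $P=(p_1,p_2)$, and comparing lowest radial $P$-degrees in $f(z(\tau))=0$ would force the leading coefficient $(a,b)$ to satisfy $f_P(a,b)=0$. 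If $k_1\geq p_1/p_2\geq k_N$ with $(a,b)\in(\C^*)^2$, this contradicts Condition~(ii) of Definition~\ref{Newtoncond} at a $1$-face or, via Lemma~\ref{lem:vertex}, at a non-extreme vertex. The extremal cases $p_1/p_2>k_1$, $p_1/p_2<k_N$, or a vanishing coordinate reduce to Condition~(i), exactly as in the convenient/non-convenient bookkeeping at the end of the proof of Proposition~\ref{weak-isolated}.

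Once this is established, the full vanishing set of $f$ in a small bidisk around the origin is precisely the union of the rescaled branches $(r^{k_i}w_{i,\ell}(r,t),r\rme^{\rmi t})$ over $i$ and $\ell$, together with the axis $\{v=0\}$ if $f$ is not $u$-convenient. The strictly decreasing values of $k_i$ keep the different strata disjoint in exactly the nested manner of Definition~\ref{def:russiandoll}, so the intersection with $\C\times\rho S^1$ realises $B(B_1(\rho),\ldots,B_N(\rho))$ (and its braid axis in the non-convenient case). Performing the isotopy between $V_f\cap(\C\times\rho S^1)$ and $V_f\cap S^3_\rho$ as in the last paragraph of the proof of Proposition~\ref{prop:boundary} (cf.~\cite{Dennis_Bode2019}) gives the closure of $B(B_1,\ldots,B_N)$, respectively $B^o(B_1,\ldots,B_N)$, which is $L_{f_\Gamma}$ by Proposition~\ref{prop:boundary}. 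Hence $L_f=L_{f_\Gamma}$, as required.
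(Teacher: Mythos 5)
The existence half of your argument (the IFT branches over the $B_i$, and weak isolatedness from Proposition~\ref{weak-isolated}) matches the paper. The gap is in your uniqueness step, which is exactly the part where the paper has to do something new. Your curve-selection argument extracts from an alleged extra arc of zeros, with leading weight $P$ and leading coefficients $(a,b)\in(\C^*)^2$, only the single equation $f_P(a,b)=0$, and you then claim this contradicts Condition (ii) of Definition~\ref{Newtoncond} when $P$ is a $1$-face weight. It does not: Condition (ii) forbids \emph{critical points} of $f_\Delta$ on $V_{f_\Delta}\cap(\C^*)^2$, not zeros, and the braids $B_i$ are themselves made of zeros of $f_{P_i}$ in $(\C^*)^2$. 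In that case the correct conclusion is that the rescaled limit of the arc is a point of $B_i$, so that by the uniqueness clause of the implicit function theorem the arc lies on a branch you have already found; to get a contradiction you must apply the curve selection lemma to a connected component of $V_f\cap D_{\rho}^4\cap(\C^*)^2$ assumed distinct from the known ones, as is done in the proof of Theorem~\ref{thmmixed}. Similarly, the extremal cases do not ``reduce to Condition (i) exactly as in Proposition~\ref{weak-isolated}'': that bookkeeping evaluates all four equations $s_{1,f}=s_{2,f}=s_{3,f}=f=0$ along the arc, whereas an arc of mere zeros gives you only $f=0$, i.e.\ only the last line of the system \eqref{almostnondeg}, which is perfectly consistent with Condition (i). Indeed, for $f$ as in Example~\ref{ex1} one has $f_{\Delta_1}=-2(v^7+\bar{v}^7)$, which vanishes on rays in $(\C^*)^2$, and $V_f$ genuinely contains arcs with $u=0$ approaching the origin; they simply lie on the closure of the known components, so no contradiction is available there.

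The paper's actual proof replaces this step by a root-counting argument that sidesteps curve selection entirely: writing $u=R\rme^{\rmi\varphi}$ and viewing $f$ as a polynomial $F_{\varphi,r,t}$ in $R$ (treated as a complex variable), it notes that $R=0$ is a root of $F_{\varphi,0,t}$ of multiplicity exactly $s_N$, because every monomial of $f-f_{\Gamma}$ surviving at $v=0$ has total degree $j_1+j_2>s_N$; continuity of the roots of a polynomial in its coefficients then caps the number of zeros of $f$ near $u=0$ at $s_N$ for small $r$, which is precisely the number of braid strands already accounted for. If you prefer the curve-selection route, it can be repaired, but you must (a) run it on connected components rather than arbitrary arcs, (b) handle $1$-face weights by forcing the component onto a known $B_i$, and (c) handle vertex weights via Lemma~\ref{lem:vertex} for non-extreme vertices and $\Delta_{N+1}$, with a separate argument at the extreme vertex $\Delta_1$ — in effect reproving the relevant parts of Theorem~\ref{thmmixed}.
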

\begin{proof}
Much of the proof is completely analogous to that of Proposition~\ref{prop:boundary}. The difference is that some terms above the Newton boundary of $f$ are mixed instead of semiholomorphic. %By Lemma~\ref{facedecom}  $f_i(u,0,t)$, defined in \eqref{decomp}, 
%\begin{equation}
%f(u,\bar{u},r\rme^{\rmi t},r\rme^{-\rmi t})=r^{k_i s_i+n_i}f_i\left(\frac{u}{r^{k_i}},\frac{\bar{u}}{r^{k_i}},r,t\right),
%\end{equation}
%is equal to $g_i(u,\rme^{\rmi t})$ at $r=0$.

But, again, the implicit function theorem implies that for small enough values of $r$ and in a tubular neighborhood of $B_i$ the zeros of $f_i$, as in Eq.~\eqref{decomp}, are parametrised by $r$ and $t$. Again, this shows that
\begin{equation}
f(r^{k_i}u_{i,\ell}(r,t),r^{k_i}\overline{u_{i,\ell}(r,t)},r\rme^{\rmi t},r\rme^{-\rmi t})=0.
\end{equation}
These roots form the closure of the braid $B(B_1,B_2,\ldots,B_N)$, which is $L_{f_{\Gamma}}$. By continuity the Jacobian of $f_i$ has full rank on its zeros $B_i(r)$ and hence the Jacobian of $f$ has full rank on its zeros forming $B(B_1,B_2,\ldots,B_N)$. Therefore, what is left to show is that these are the only zeros of $f$ in a neighborhood of the origin, except a component with $v=0$ in the case that $f$ is not $u$-convenient.

Since $f$ is not necessarily semiholomorphic, we cannot use the same argument as in the proof of Proposition~\ref{prop:boundary}. For simplicity we suppose that $f$ is $u$-convenient, otherwise we can divide by $r$ and the argument still holds for $f/r$. 

We now write $u=R\rme^{\rmi \varphi}$ and $\bar{u}=R\rme^{-\rmi\varphi}$ and consider $f$ as a polynomial in the variable $R$, keeping $\varphi$ as a parameter. However, we want to think of $R$ as a complex variable. Of course, eventually we are only interested in solutions with real values of $R$. Thus we write 
\begin{equation}
F_{\varphi,r,t}(R)\defeq f(R\rme^{\rmi \varphi},R\rme^{-\rmi \varphi},r \rme^{\rmi t},r \rme^{-\rmi t})
\end{equation}
and consider this family of polynomials as deformations of
\begin{equation}
F_{\varphi,0,t}(R)=a_{s_N}(R\rme^{\rmi \varphi})^{s_N}+\sum_{j_1,j_2}a_{j_1,j_2}R^{j_1+j_2}\rme^{\rmi(j_1-j_2)\varphi},
\end{equation}
where 
\begin{equation}
f(u,\bar{u},0,0)=a_{s_N}u^{s_N}+\sum_{j_1,j_2}a_{j_1,j_2}u^{j_1}\bar{u}^{j_2}.
\end{equation}
Note that $j_1+j_2>s_N$, since all these terms lie above the Newton boundary. If $f$ is $u$-convenient, $a_{S_N}\in\mathbb{C}^*$. If $f$ is not $u$-convenient, then in the corresponding equation for $f/r$ the coefficient depends on $t$, but by Lemma~\ref{lem:vertex} is never zero.

A quick remark is in order. Even though $R$, $\varphi$, $r$ and $t$ are originally defined as polar coordinates of complex variables, they are considered here as independent variables, i.e., even when $r=0$ we can vary $t$ from to $2\pi$ (and likewise for $R$ and $\varphi$). This variation has no effect on the resulting function, but it allows us to proceed as follows. 

Note that $R=0$ is a root of $F_{\varphi,0,t}$ of multiplicity $s_N$ for all values of $\varphi$ and $t$. Therefore there is a neighbourhood $U\subset \mathbb{C}$ of $R=0$ such that for any small enough value of $r$ and any value of $\varphi$ and $t$ the neighbourhood $U$ contains exactly $s_N$ roots of $F_{\varphi,r,t}$ when counted with multiplicity. This is a classical result on complex polynomials. A proof can be found (among others) in \cite{controots}.

We are only interested in real solutions $R$ of $F_{\varphi,r,t}$. The way that $F_{\varphi,r,t}$ is defined means that there is a relation between the roots at different values of the parameter $\varphi$ (while $r$ and $t$ are fixed). Namely, $R\in\mathbb{C}$ is a root of $F_{\varphi,r,t}$ if and only if $R\rme^{-\rmi \varphi}$ is a root of $F_{0,r,t}$. (It follows that if $R=0$ is a root for some value of $\varphi$, it is a root for all values of $\varphi$.) 

Now suppose that around the origin in $\mathbb{R}^4\cong\mathbb{C}^2$ the curves $(r^{k_i} u_{i,j}(r,t),r\rme^{\rmi t})$, with $r>0$ small, $t\in[0,2\pi]$, are not the only zeros of $f$ with $r>0$, i.e., there is an $\varepsilon>0$ such that for all $\rho<\varepsilon$ the curves
\begin{align}
&(D^4_{\rho}\backslash\{(u,0):u\in\C\})\cap\{(r^{k_i} u_{i,j}(r,t),r\rme^{\rmi t}):r\geq0,t\in[0,2\pi]\}\nonumber\\
&\subset (D^4_{\rho}\backslash\{(u,0):u\in\C\})\cap f^{-1}(0)
\end{align}
are a proper subset of $(D^4_{\rho}\backslash\{(u,0):u\in\C\})\cap f^{-1}(0)$, where $D^4_{\rho}$ denotes the 4-ball of radius $\rho$ around the origin.

Then for every $\rho<\varepsilon$ there exist an $r$, $t\in[0,2\pi]$ and more than $s_N$ complex numbers $U_j(r,t)$, $j=1,2,\ldots,s'$, $s'>s_N$, with $|U_j(r,t)|^2+r^2=\rho^2$, such that $f(U_j(r,t),r\rme^{\rmi t})=0$. (We know that 0 is an isolated root of $f(\cdot,0)$ if $f$ is $u$-convenient and so can exclude the case of $r=0$.)

But that would imply that for every $\rho$ there exist an $r\leq\rho$, $t\in[0,2\pi]$, $R_j(r,t)=|U_j(r,t)|\leq\rho$ and $\varphi_j(r,t)=\arg(U_j(r,t))$, $j=1,2,\ldots,s'$, with $F_{\varphi_j(r,t),r,t}(R_j(r,t))=0$. By choosing $\rho$ sufficiently small all $U_j(r,t)$ are contained in $U$.

This implies that $R_j(r,t)\rme^{-\rmi \varphi(r,t)}$, $j=1,2,\ldots,s'$, are more than $s_N$ roots of $F_{0,r,t}$ in $U$, which contradicts the statement above.

We have thus shown that 
\begin{align}
&(D^4_{\rho}\backslash\{(u,0):u\in\C\})\cap\{(r^{k_i} u_{i,j}(r,t),r\rme^{\rmi t}):r>0,t\in[0,2\pi]\}\nonumber\\
&=D^4_{\rho}\cap f^{-1}(0)\backslash\{(u,0):u\in\C\})
\end{align}
for small enough $\rho>0$.
The polynomial $f$ is $u$-convenient if and only if $f_{\Gamma}$ is $u$-convenient, so that the $v=0$ forms part of $V_f$ if and only if it is part of $V_{f_{\Gamma}}$. 

Since there are no critical points of $f$ on $\{(r^{k_i} u_{i,j}(r,t),r\rme^{\rmi t}):r>0,t\in[0,2\pi]\}$, the singularity at the origin is weakly isolated, whether $f$ is $u$-convenient or not.  Since the curves $\{(r^{k_i} u_{i,j}(r,t),r\rme^{\rmi t}):r>0,t\in[0,2\pi]\}$ parametrize the closed braid $B(B_1,B_2,\ldots,B_N)$, its closure is the link of the singularity if $f$ is $u$-convenient.

If $f$ is not $u$-convenient, the link of it singularity is the union of the closed braid $B(B_1,B_2,\ldots,B_N)$ and the braid axis given by $v=0$.

The link of the singularity of $f$ is thus equivalent to the link of the singularity of $f_{\Gamma}$ in either case.
\end{proof}

%{\color{red} - --I am not sure if I got the idea of the next paragraph. Maybe we could shorten it or remove?---
%\vspace{0.2cm}

%In order to visualize the link of a singularity of a given mixed polynomial once could naively try to plot its vanishing set on 3-sphere of small radius. However, one would have to determine just how small is small enough. Our results allows us to side-step such issues if the given function is inner non-degenerate and has semiholomorphic principal part. Since the functions $g_i$, $i=1,2,\ldots,N$, do not depend on $r$, their roots can usually easily be plotted using a mathematical software program such as Mathematica, which then allows us to visualize the entire link of the singularity for a given mixed polynomial with semiholomorphic polynomial without having to worry about choosing a sufficiently small radius.} 

%So far we have dealt with mixed polynomials whose boundary polynomial is semiholomorphic. 

\section{LINKS OF INNER NON-DEGENERATE MIXED POLYNOMIALS}\label{section5}
Now we generalize the results from Section~\ref{section4} to general mixed polynomials with inner non-degenerate Newton boundary. Note that in this case the different $g_i$s are not necessarily semiholomorphic. Thus the number of zeros of $g_i(\cdot,\rme^{\rmi t})$ may vary with $t$, so that its zeros in general do not form a braid, but some link $L$ in $\mathbb{C}\times S^1$. The vanishing set of $g_i$ could also be the empty set. In this case we say that $L$ is the \textit{empty link}.

\begin{definition}
\label{def:russiandoll2}
Let $(L_1,L_2,\ldots,L_{N-1})$ be an ordered sequence of (possibly empty) links, where every non-empty $L_i$ is parametrized by
\begin{equation}
\bigcup_{j=1}^{M_i}\bigcup_{\tau\in[0,2\pi]}\{(u_{i,j}(\tau),\rme^{\rmi t_{i,j}(\tau)})\}\subset\mathbb{C}\times S^1,
\end{equation}
where $j=1,2,\ldots,M_i$ is indexing the $M_i$ components of $L_i$ and $z_{i,j}(\tau)\neq 0$ for all $i\neq 1$,$j$ and $\tau$. 
We define $[L_1,L_2,\ldots,L_{N-1}]$ to be the link given by
\begin{equation}
\bigcup_{i=1}^{N-1}\bigcup_{j=1}^{M_i}\bigcup_{\tau\in[0,2\pi]}\{(\varepsilon^{k_i} u_{i,j}(\tau),\rme^{\rmi t_{i,j}(\tau)})\}\subset\mathbb{C}\times S^1,
\end{equation}
for some sufficiently small $\varepsilon>0$ and a sequence of strictly decreasing positive real numbers $k_i$, $i=1,2,\ldots,N-1$.
\end{definition}
This construction is the analogue of Definition~\ref{def:russiandoll}, but in $\C\times S^1$ instead of $\C\times[0,2\pi]$. In particular, if every $L_i$ is a closed braid $B_i$, then $[L_1,L_2,\ldots,L_{N-1}]=B(B_1,B_2,\ldots,B_{N-1})$.

\begin{definition}\label{def:russiandoll3}
Let $K_1$ be a link in $\C\times S^1$ that is either empty or parametrized by
\begin{equation}
\bigcup_{j=1}^M\bigcup_{\tau\in[0,2\pi]}\{(u_{j}(\tau),\rme^{\rmi t_{j}(\tau)})\}
\end{equation}
and $K_2$ be a link in $S^1\times\C$ that is either empty or parametrized by
\begin{equation}
\bigcup_{j=1}^{M'}\bigcup_{\tau\in[0,2\pi]}\{(\rme^{\rmi \varphi_{j}(\tau)},v_{j}(\tau))\}.
\end{equation}
We define the link $\widetilde{K_i}$, $i=1,2$, in $S^3$, which is empty if $K_i$ is empty and otherwise parametrized by 
\begin{equation}
\bigcup_{j=1}^M\bigcup_{\tau\in[0,2\pi]}\left\{\left(\varepsilon u_{j}(\tau),\sqrt{1-\varepsilon^2|u_j(\tau)|^2}\rme^{\rmi t_{j}(\tau)}\right)\right\}
\end{equation}
and
\begin{equation}
\bigcup_{j=1}^{M'}\bigcup_{\tau\in[0,2\pi]}\left\{\left(\sqrt{1-\varepsilon^2|v_j(\tau)|^2}\rme^{\rmi \varphi_{j}(\tau)},\varepsilon v_{j}(\tau)\right)\right\}
\end{equation}
for some small $\varepsilon>0$, respectively.

Then we write $L(K_1,K_2)$ for the link in $S^3$ given by $\widetilde{K_1}\cup\widetilde{K_2}$.
\end{definition}

\begin{figure}[H]
    \centering
    \includegraphics[height=10cm]{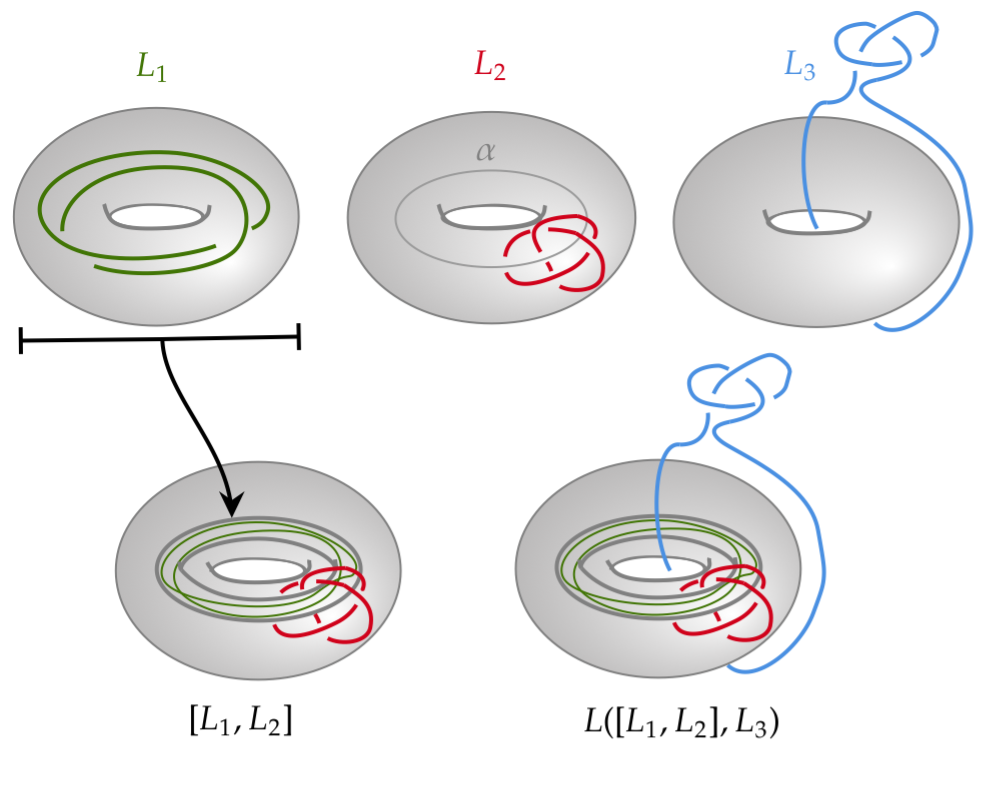}
    \caption{In the top row: links $L_1$ in $\C\times S^1$, $L_2$ in $(\C\backslash\{0\})\times S^1$, and $L_3$ in the complementary torus $S^1\times\C$. In the bottom row: the links $[L_1,L_2]$ and $L([L_1,L_2],L_3)$.}
    \label{fig:links}
\end{figure}

Figure~\ref{fig:links} displays the constructions defined in Definition~\ref{def:russiandoll2} and Definition~\ref{def:russiandoll3}. While they are given in terms of parametrizations of the links $L_i$, the link type of $L([L_1,L_2,\ldots,L_{N-1}],L_N)$ only depends on the isotopy class of $L_1$ in $\C\times S^1$, the isotopy classes of the $L_i$s with $i\in\{2,3,\ldots,N-1\}$ in $(\C\backslash\{0\})\times S^1$ and the isotopy class of $L_N$ in $S^1\times\C$.

The relation between the links in solid tori and links in $S^3$ can be understood as follows. The 3-sphere can be decomposed into two solid tori, which are glued together on their boundary. The cores of these solid tori are the unknots $\alpha:=\{(0,\rme^{\rmi t})\}_{t\in[0,2\pi]}$ and $\beta:=\{(\rme^{\rmi \varphi},0)\}_{\varphi\in[0,2\pi]}$, respectively. Definition~\ref{def:russiandoll3} thus interprets the solid torus $\C\times S^1$ containing $K_1$ as the solid torus in $S^3$ with core $\alpha$. Likewise the solid torus $S^1\times\C$ containing $K_2$ is identified with the complementary solid torus, whose core is $\beta$. Figure~\ref{fig:links} shows the corresponding links projected into $\mathbb{R}^3$, so that the solid torus $\C\times S^1$ is identified with a tubular neighbourhood of the unit circle in the $xy-$plane. Its complementary solid torus $S^1\times\C$ in $S^3$ is then obviously its complementary solid torus in $\mathbb{R}^3$ including the point at infinity. The image of $\beta$ under the stereographic projection map is the $z$-axis.

There are very close relations between Definitions~\ref{def:russiandoll2} and~\ref{def:russiandoll3} and Definition~\ref{def:russiandoll}. If $K_2$ does not intersect the core of its solid torus, i.e., $v_j(\tau)\neq 0$ for all $\tau\in[0,2\pi]$ and all $j=1,2,\ldots,M'$, then $K_2$ can also be interpreted as a link in $\C\times S^1$ via
\begin{equation}
\bigcup_{j=1}^{M'}\bigcup_{\tau\in[0,2\pi]}\{(u_j(\tau),\rme^{\rmi t_j(\tau)})\},
\end{equation}
where $|u_j(\tau)|=\sqrt{1-\varepsilon|v_j(\tau)|^2}$, $\arg(u_j(\tau))=\varphi_j(\tau)$ and $t_j(\tau)=\arg(v_j(\tau))$. In this case $L(K_1,K_2)$ is equal to $[K_1,K_2]$ interpreted as a link in $\C\times S^1\subset S^3$.

In particular, if $K_2$ does not intersect the core of its solid torus $S^1\times \C$ and forms a closed geometric braid $B_N$ in $\C\times S^1$, then $L([B_1,B_2,\ldots,B_{N-1}],B_N)=B(B_1,B_2,\ldots,B_{N-1},B_N)=[B_1,B_2,\ldots,B_{N-1},B_N]$ for any sequence of closed geometric braids $B_1,B_2,\ldots,B_{N-1}$.

If $K_2$ contains a component that is equal to the core $(\rme^{\rmi \varphi},0)$ and the remaining components of $K_2$ (interpreted as a link in $\C\times S^1$) form a closed geometric braid $B_N$, we have $L([B_1,B_2,\ldots,B_{N-1}],K_2)=B^o(B_1,B_2,\ldots,B_{N-1},B_N)$.

Definition~\ref{def:russiandoll2} and Definition~\ref{def:russiandoll3} allow us to construct links from a sequence of links in a way that generalizes the nested behaviour of the braids in Definition~\ref{def:russiandoll}. However, they allow us to deal with links in solid tori that are not closed braids and with links $K_2\subset S^1\times \C$ that intersect the core of the solid torus.

As stated in Theorem~\ref{thm:main} the links of the singularities of mixed polynomials with a certain specified behavior on its Newton boundary take the form $L([L_1,L_2,\ldots,L_{N-1}],L_N)$. Note that we could define an analogue of the construction in Definition~\ref{def:russiandoll2} for links in the solid torus $S^1\times \C$, which produces for any sequence of links $L_1,L_2,\ldots,L_N$ in $S^1\times \C$ a link $[L_1,L_2,\ldots,L_N]$ in $S^1\times \C$. Then the link $L([L_1,L_2,\ldots,L_{N-1}],L_N)$ could be equivalently represented by $L([L_1,L_2,\ldots,L_k],[L_N,L_{N-1},\ldots,L_{k+1}])$ for any $k=2,3,\ldots,N-1$. Note the order in the second bracket is reversed. This is because Definition~\ref{def:russiandoll2} arranges the links $L_i$ such that the $u$-coordinates of $L_i$ are strictly smaller than those of $L_j$ for all $j>i$ and a smaller $u$-coordinate corresponds to a larger $v$-coordinate in $S^3$.

%For every non-extreme vertex that bound a compact 1-face of the Newton boundary of $f$ we can write its leading term (with regards to $r$) as $a(\varphi,t)R^m r^n$ for some natural numbers $m$ and $n$, where $a(\varphi,t)$ is a complex linear combination of $\rme^{\rmi (2j-m) \varphi}$ and $\rme^{\rmi (2k-n) t}$, $j=0,1,\ldots,m-1,m$, $k=0,1,\ldots,n-1,n$.

%\begin{figure}[H]
%    \centering
%    \includegraphics[height=8cm]{NestedL.png}
%    \caption{Links}
%    \label{fig:links}
%\end{figure}

In order to prove an analogue of Proposition~\ref{prop:main} for mixed polynomials we need an additional assumption.
\begin{definition}\label{def:blabla}
We say that a mixed polynomial $f:\mathbb{C}^2\to\mathbb{C}$ is \textbf{nice} if for every non-extreme vertex $\Delta$ of $f$,  $V_{f_{\Delta}}\cap (\C^*)^{2}=\emptyset$.
\end{definition}
Note that $f$ in Proposition~\ref{prop:boundary} is nice, since $f$ is nice in a semiholomorphic vertex if and only if $f$ is Newton non-degenerate for this vertex (cf. Lemma~\ref{lem:vertex}).

Let $\Delta$ be a non-extreme vertex and consider the weight vector $P_i$ such that $\Delta=\Delta(P_{i-1})\cap\Delta(P_i)$. By definition of $m_i$ the polynomials $g_i$ and $f_{\Delta}=R^{m_i}\Phi(\rme^{\rmi \varphi},\rme^{\rmi t})$ are both divisible by $R^{m_i}$. We may thus consider the function $\tfrac{g_i}{R^{m_i}}$ as a function of $R$, $\varphi$ and $t$. Again, we may vary $\varphi$ from 0 to $2\pi$ even when $R=0$. 

Note that $\tfrac{g_i}{R^{m_i}}(0,\rme^{\rmi \varphi},\rme^{\rmi t})=\Phi(\rme^{\rmi \varphi},\rme^{\rmi t})$. The condition of being nice guarantees that $\Phi$ never vanishes and thus we have for all values of $t$ and $\varphi$ that $\tfrac{g_i}{R^{m_i}}(0,\rme^{\rmi \varphi},\rme^{\rmi t})\neq 0$. Therefore the vanishing set of $g_i$ splits into two sets of connected component: The set $u=0$ with multiplicity $m_i$ and another set of components $L_i\subset\C\times S^1$ disjoint from $\{(0,\rme^{\rmi t})\}_{t\in[0,2\pi]}$.

As we have seen in the previous section this condition becomes simpler if $g_i$ is semiholomorphic. It means that $\tfrac{g_i}{u^{m_i}}(0,0,\rme^{\rmi t})\neq0$. (Note that in this case $g_i$ is divisible by $u^{m_i}$.) Furthermore, the degree of $g_i$ with respect to $u$ does not depend on $t$, so that the roots of $g_i(\cdot,\rme^{\rmi t})$ trace out a braid as $t$ varies from 0 to $2\pi$.   

%Compare this to work of Oka on simple vertices. The polynomials that have only simple vertices of the Newton boundary are immediately seen to be nice. 
Define $L_1$ as the link in $\C\times S^1$ formed by the zeros of the function $g_{1}$ associated to $f_{P_{1}},$ as in Eq.~\eqref{decomp2} and $L_N$ be the link in $S^1\times\C$ formed by the zeros of the function $\widehat{g}_N:S^1\times\C\to\C$ defined by
\begin{equation}
f_{P_N}(u,\bar{u},v,\bar{v})=f_{P_N}(R\rme^{\rmi \varphi},R\rme^{-\rmi \varphi},v,\bar{v})=R^{\hat{s}_i/k_N+m_N}\widehat{g}_N(\rme^{\rmi \varphi},v,\bar{v}), 
\end{equation}
where $\hat{s}_i$ is the degree of $f_{P_N}$ with respect to $r=|v|=|\bar{v}|$.

Define $L_{i},\ i=2,3,\ldots, N-1,$ as the link formed by the zeros of the function $g_i$ with non-zero $u$-coordinate as above. The fact that every $L_i$, $i=1,2,\ldots,N$ is a link rather than a set of curves with (self)intersections follows from the same line of reasoning as the proof of Lemma~\ref{lem:gibraid}. If there were some critical point of $g_i$ on $g_i^{-1}(0)$, then there would exist a corresponding line of critical points of $f_{P_i}$ in $V_{f_{P_i}}\cap(\C^*)^2$ contradicting the assumption that $f$ is inner non-degenerate. 

\begin{lemma}
\label{thm}
Let $f:\mathbb{C}^2\to \mathbb{C}$ be a mixed polynomial with a nice inner non-degenerate boundary. Then the link of the singularity $L_f$ contains the link $L([L_1,L_2,\ldots,L_{N-1}],L_{N})$ as a sublink.
\end{lemma}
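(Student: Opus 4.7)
The plan is to adapt the strategy of the proof of Proposition~\ref{prop:semi} to the general mixed setting, where the $g_i$'s need not be semiholomorphic so that each $L_i$ is only a link in the relevant solid torus rather than a closed braid. Proposition~\ref{weak-isolated} already provides that $f$ has a weakly isolated singularity at the origin, hence $L_f=V_f\cap S^3_\rho$ is a well defined link for all sufficiently small $\rho>0$. The goal is to locate inside $V_f\cap S^3_\rho$ a collection of components that, after the rescalings of Definitions~\ref{def:russiandoll2} and~\ref{def:russiandoll3}, realize the nested link $L([L_1,L_2,\ldots,L_{N-1}],L_N)$; this will give the desired sublink.

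For a middle 1-face $\Delta(P_i)$ with $2\leq i\leq N-1$ I would apply Lemma~\ref{facedecom} to obtain the decomposition
\begin{equation*}
f(u,\bar u,r\rme^{\rmi t},r\rme^{-\rmi t})=r^{k_i s_i+n_i}f_i\!\left(\tfrac{u}{r^{k_i}},\tfrac{\bar u}{r^{k_i}},r,t\right),
\end{equation*}
where $f_i(\cdot,\cdot,0,t)=g_i(\cdot,\cdot,\rme^{\rmi t})$. The non-zero roots of $g_i(\cdot,\cdot,\rme^{\rmi t})$ parametrize $L_i\subset\C^*\times S^1$. The inner non-degeneracy on $\Delta(P_i)$ forbids critical points of $f_{P_i}$ on $V_{f_{P_i}}\cap(\C^*)^2$, which translates into $g_i$ being a submersion onto $\C$ at every point of $L_i$. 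Combined with the compactness of $L_i$, the implicit function theorem then produces, for every sufficiently small $r>0$, a link $L_i(r)$ in the vanishing set of $f_i(\cdot,\cdot,r,\cdot)$ that is isotopic to $L_i$ in $\C^*\times S^1$ and varies smoothly with $r$. Undoing the scaling yields curves $(r^{k_i}w,r\rme^{\rmi t})\in V_f$ with $(w,\rme^{\rmi t})\in L_i(r)$. The same argument applied to $\Delta(P_1)$ produces a family $L_1(r)$ isotopic to $L_1\subset\C\times S^1$; here components of $L_1$ along $u=0$ are allowed, but the inner non-degeneracy on $P_1$ guarantees they are simple zeros of $g_1(\cdot,\cdot,\rme^{\rmi t})$, so the implicit function theorem still applies. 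For $\Delta(P_N)$ I would use the symmetric decomposition obtained by interchanging the roles of $u$ and $v$, rescaling $v$ by a power of $R=|u|$ so that at $R=0$ we recover $\widehat g_N$; the same reasoning then yields a family $L_N(R)\subset V_f$ placed near the core $\{v=0\}$ and isotopic to $L_N\subset S^1\times\C$.

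The key additional input is niceness. At each non-extreme vertex $\Delta=\Delta(P_{i-1})\cap\Delta(P_i)$, Definition~\ref{def:blabla} forces $f_\Delta$ to have no zero in $(\C^*)^2$, which, as explained just before the lemma, implies that $L_i$ stays at a uniformly positive and uniformly bounded distance from the core $u=0$ in $\C^*\times S^1$. Since the exponents $k_i$ are strictly decreasing, this uniform bound ensures that after the rescaling $u\mapsto r^{k_i}u$ the different families $L_i(r)$ live at pairwise different orders of magnitude in $|u|$ for small $r$; in particular they are disjoint and nested exactly as prescribed by Definition~\ref{def:russiandoll2}. Projecting the resulting curves from the cylinder $\C\times\rho S^1$ onto $S^3_\rho$ as in the final paragraph of the proof of Proposition~\ref{prop:boundary}, and gluing in the piece coming from $\Delta(P_N)$ via the embedding of Definition~\ref{def:russiandoll3}, produces a sublink of $L_f$ isotopic to $L([L_1,\ldots,L_{N-1}],L_N)$. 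I expect the main technical difficulty to lie in this separation-of-scales step: one has to verify rigorously that after undoing the rescalings the perturbed links $L_i(r)$ do not collide with one another and do not escape the tubular neighbourhoods prescribed by the Russian doll construction, and this is precisely where niceness is indispensable, since without it some $L_i$ could approach the core and merge with the contribution from a neighbouring face.
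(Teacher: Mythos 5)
Your proposal follows essentially the same route as the paper's proof: the decomposition of Lemma~\ref{facedecom}, the implicit function theorem applied to $f_i$ near $r=0$ (with the roles of $u$ and $v$ exchanged for the face $\Delta(P_N)$), niceness to keep each $L_i$ away from the core $u=0$, and the strictly decreasing exponents $k_i$ to separate the scales as in Definitions~\ref{def:russiandoll2} and~\ref{def:russiandoll3}. The one point where the paper is more careful is the passage from ``$f_{P_i}$ has no critical points on $V_{f_{P_i}}\cap(\C^*)^2$'' to ``$g_i$ is a submersion along $L_i$'': since $g_i$ need not be holomorphic in $u$, one must use that $f_{P_i}$ is radially weighted homogeneous (so it depends on $r$ only through scaling) together with the tangency of the zero set to conclude that full rank of the $2\times 4$ real Jacobian forces the $2\times 2$ block of derivatives in the $u$-directions to have full rank, which is what licenses the implicit function theorem.
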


\begin{proof}
Again the proof follows the ideas from Proposition~\ref{prop:boundary}. We consider $f_i:\mathbb{C}\times\mathbb{R}_{\geq 0}\times[0,2\pi]$ as in Eq.~\eqref{decomp} and 
%$f_i:\mathbb{C}\times\mathbb{R}_{\geq 0}\times[0,2\pi]$ via
%\begin{equation}
%f(u,\bar{u},r\rme^{\rmi t},r\rme^{-\rmi t})=r^{k_i s_i+n_i}f_i\left(\frac{u}{r^{k_i}},\frac{\bar{u}}{r^{k_i}},r,t\right%)
%\end{equation}
again we have $f_i(u,\bar{u},0,t)=g_i(u,t)$ and the non-degeneracy of $f_{P_i}$ implies that we can evoke the implicit function theorem. However, since $g_i$ is not necessarily holomorphic with respect to $u$ anymore, this requires a bit of additional explanation. We know that the real Jacobian $Df_{P_i}$ of $f_{P_i}$ has full rank at any zero of $f_{P_i}$ in $(\C^*)^{2}$. We can choose local coordinates $(x,y,\tau,r)$ such that $\partial_\tau$ is the direction along the zeros $L_i$ of $g_i$, $i=1,2,\ldots,N-1$. For $i=N$ the local coordinates may take the form $(\tau,R,x,y)$. The following arguments apply to $i=N$ when the roles of $r$ and $R$ are exchanged. Since $f_{P_i}$ is radially weighted homogeneous and $g_i$, $i=1,2,\ldots,N-1$, does not depend on $r$, i.e., $f_{P_i}$ depends on $r$ only via scaling, the matrix
\begin{equation}
\begin{pmatrix}
\frac{\partial \text{Re}(g_i)}{\partial x} & \frac{\partial \text{Re}(g_i)}{\partial y}\\
\frac{\partial \text{Im}(g_i)}{\partial x} & \frac{\partial \text{Im}(g_i)}{\partial y}
\end{pmatrix}
\end{equation}
has full rank.

Hence, by the implicit function theorem, as long as $r$ is chosen small enough, the zeros of $g_i$ are parametrized by $r$ and $\tau$, i.e. $(x_{i,\ell}(r,\tau),y_{i,\ell}(r,\tau))$, where $\ell=1,2,\ldots,M_i$, now enumerates the connected components of the zeros of $g_i$. In particular, varying $r$ yields an explicit isotopy in $\C\times S^1$ between $L_i$, the zeros of $g_i$, and $L_i'(r)$, the zeros of $f_i$ close to $L_i$ at a fixed value of $r$, with $L_i'(0)=L_i$. We can rewrite the parametrization of the links in terms of our usual coordinates and obtain $(u_{i,\ell}(r,\tau),r\rme^{\rmi t_{i,\ell}(r,\tau)})$.

Again we obtain the cones of the $L_i$s as zeros of $f$, where the scaling of the $u$-coordinate (the ``sharpness'' of the cone) for each $i$ depends on the weight vector $P_i$ via $k_i=\tfrac{p_{i,1}}{p_{i,2}}$. To be precise,
\begin{equation}
f(r^{k_i}u_{i,\ell}(r,\tau),r^{k_i}\overline{u_{i,\ell}(r,\tau)},r\rme^{\rmi t_{i,\ell}(r,\tau)},r\rme^{-\rmi t_{i,\ell}(r,\tau)})=0
\end{equation}
for all $i=1,2,\ldots,N-1$, $\ell=1,2,\ldots,M_i$, $\tau \in S^1$ and small enough $r\geq0$. Likewise, for $i=N$ we obtain 
\begin{equation}
f(R\rme^{\rmi \varphi_j(R,\tau)},R\rme^{-\rmi \varphi_j(R,\tau)},R^{1/k_N}v_j(R,\tau),R^{1/k_N}\overline{v_j(R,\tau)})=0,
\end{equation}
for all small enough $R\geq 0$, with $j=1,2,\ldots,M'$ going over the $M'$ components of $L_N$.

Therefore $L([L_1,L_2,\ldots,L_{N-1}],L_N)$ is a sublink of the link of the singularity of $f$.
%Therefore $L(L_1,L_2,\ldots,L_N)$ is a sublink of the link of the singularity of $f$. The difference to Proposition~\ref{prop:boundary} is that there might be additional components in the link of the singularity of $f$.
%Now, we see that we found already all roots of $f$ close to the origin. In fact, consider an analytic $z(t)=(a_1t^{p_1}+\cdots,b_1t^{p_2}+\cdots) \in V_f$. Note that $z(t) \in V_f$ if $f_P(a_1,b_2)=0$ since $f$ is nice we have $P \in \mathcal{P}$.    To prove that $z(t)$ is one of the roots that we found before, we prove the $\frac{a_1t^{p_1}+\cdots}{|b_1t^{p_2}+\cdots|^{p/q}}$ is close to a simple root of $g$, mapping associated to $f_P$,\begin{align}\left|\frac{a_1t^{p_1}+\cdots}{|b_1t^{p_2}+\cdots|^{p_1/p_2}} - \frac{a_1}{|b_1|^{p_1/p_2}}\right|=\frac{|a_1|}{|b_1|^{p_1/p_2}} \left|\frac{1+a_2t^{p_1+1}+\cdots}{|1+b_2t^{p_2+1}+\cdots|^{p_1/p_2}} - 1\right| \to 0\end{align} 
%Note that $\frac{a_1}{|b_1|^{p_1/p_2}}$ is a root of $g$ associated to $f_P$, thus $z(t)$ is those roots that we already consider before.
\end{proof}
Note that in contrast to Proposition~\ref{prop:main} we do not need to consider two different cases. The construction in Definition~\ref{def:russiandoll3} was devised in such a way that if $f$ is not $u$-convenient, $L_N$ has a component with $v=0$, but it does not affect the construction. It is simply the analogue of $B_1$ in Proposition~\ref{prop:main} having a component with $u=0$. We would like to emphasize again that $g_i(u,\rme^{\rmi t})$ or $\widehat{g_N}$ could potentially be non-vanishing for all $u\in\mathbb{C}$, $t\in[0,2\pi]$, in which case $L_i$ is the empty link.
%\begin{definition}
%Following Oka, we call $f_{P_i}$ true if the corresponding polynomial $g_i(u,\bar{u},\rme^{\rmi t})$ has a non-empty vanishing set in $\mathbb{C}^*\times[0,2\pi]$.

%We say that a mixed polynomial $f:\mathbb{C}^2\to\mathbb{C}$ is true if for all $P_i\in\mathcal{P}$ the corresponding $f_{P_i}$ is true.
%\end{definition}

%\begin{corolario}Let $f:\mathbb{C}^2\to\mathbb{C}$ be a true mixed polynomial with a nice Newton almost non-degenerate boundary. Then $L_f$ consists of at least $N$ connected components, where $N$ denotes the number of compact 1-faces of the Newton boundary of $f$. In particular, if a knot is the link of a weakly isolated singularity of  a true mixed polynomial $f$ with a nice Newton almost non-degenerate boundary then $f$ is radially weighted homogeneous.\end{corolario}

\begin{teo}
\label{thmmixed}
Let $f:\mathbb{C}^2\to\mathbb{C}$ and $L_i$, $i=1,2,\ldots,N$, be as in Lemma~\ref{thm}. Then the link of the singularity is equal to $L([L_1,L_2,\ldots,L_{N-1}],L_N)$.
\end{teo}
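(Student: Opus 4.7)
The plan is to combine Lemma~\ref{thm}, which already provides the inclusion $L([L_1,\ldots,L_{N-1}],L_N)\subset L_f$ via the cones $\mathcal{C}_i$ constructed there, with a curve-selection argument in the spirit of Proposition~\ref{weak-isolated}, so as to rule out extra components of $L_f$. Suppose, towards a contradiction, that $V_f$ contains points in every punctured neighbourhood of the origin lying off $\bigcup_i\mathcal{C}_i$. The curve selection lemma then produces a non-constant real analytic germ $z(\tau)=(u(\tau),v(\tau))$ with $z(0)=0$ and $z(\tau)\in V_f\setminus\bigcup_i\mathcal{C}_i$ for all small $\tau>0$. Writing $u(\tau)=a\tau^{p_1}+\mathrm{h.o.t.}$ and $v(\tau)=b\tau^{p_2}+\mathrm{h.o.t.}$ with the convention $p_i=\infty$ if the coordinate vanishes identically, and setting $P=(p_1,p_2)$, plugging into $f(z(\tau))=0$ and comparing leading orders in $\tau$ yields the single equation $f_P(a,\bar a,b,\bar b)=0$.

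I would then analyse cases according to how the ratio $p_1/p_2$ sits relative to the slopes $k_i=p_{i,1}/p_{i,2}$ of the compact 1-faces. First, if $p_1/p_2=k_i$ for some $i$, then necessarily $a,b\in\C^*$ and in the scaled coordinates $(w,r,t)=(u/r^{k_i},|v|,\arg v)$ from Lemma~\ref{facedecom} the curve $z(\tau)$ limits onto a zero of $g_i$ lying in $\C^*\times S^1$; Condition~(ii) of inner non-degeneracy makes this zero regular, and applying the implicit function theorem to $f_i$---exactly the argument of Lemma~\ref{thm}---identifies the germ of $V_f$ there with the cone $\mathcal{C}_i$, contradicting $z(\tau)\not\subset\bigcup_j\mathcal{C}_j$. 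Second, if $p_1/p_2$ lies strictly between $k_{i-1}$ and $k_i$ for some $i\in\{2,\ldots,N\}$, then $a,b\in\C^*$ and $f_P$ is the face function of a non-extreme vertex; the leading equation directly contradicts niceness. Finally, if $p_1/p_2>k_1$, if $p_1/p_2<k_N$, or if one of $u,v$ vanishes identically, then in the appropriate scaled coordinates the curve limits onto a zero of $g_1$ or $\widehat g_N$ lying on the axis $w=0$; Condition~(i) guarantees regularity there, and since the definition of $L_1$ (respectively $L_N$) includes such axis zeros---together with the core components $u=0$ or $v=0$ that appear precisely when $f$ fails to be $v$-convenient (respectively $u$-convenient)---the implicit function theorem again forces $z(\tau)\subset\mathcal{C}_1$ (respectively $\subset\mathcal{C}_N$), yielding the desired contradiction.

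The main obstacle is this last regime. Unlike the non-extreme-vertex case, the leading-order equation here does not immediately contradict Condition~(i), because the face function $f_{\Delta_1}$ (respectively $f_{\Delta_{N+1}}$) may genuinely have zeros on the relevant axis, so the four-equation argument from Proposition~\ref{weak-isolated} cannot be copied verbatim. The key observation that unlocks the argument is that Condition~(i) of inner non-degeneracy provides precisely the regularity of $g_1$ (respectively $\widehat g_N$) needed at those axis zeros, and that these axis zeros are, by design, included inside $L_1$ (respectively $L_N$). Once this identification is made, the uniqueness of the zero set of $f_i$ in a tubular neighbourhood of $L_i\times\{0\}$ guaranteed by the implicit function theorem propagates the analysis of Lemma~\ref{thm} through the extreme vertices and closes the proof.
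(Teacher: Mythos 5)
Your proposal is correct in substance and follows the same overall strategy as the paper: apply the curve selection lemma to a putative extra piece of $V_f$, read off the leading weight $P=(p_1,p_2)$ from the orders of the two coordinate power series, and split into cases according to where $p_1/p_2$ sits relative to the slopes $k_1>\cdots>k_N$. Your treatment of the two central cases (slope equal to some $k_i$: the curve limits onto a point of $L_i$ and the implicit function theorem from Lemma~\ref{thm} traps it in the corresponding cone; slope strictly between consecutive $k_i$'s: the leading equation $f_\Delta(a,\bar a,b,\bar b)=0$ with $(a,b)\in(\C^*)^2$ contradicts niceness) coincides with the paper's.

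Where you genuinely diverge is in the extreme regimes $p_1/p_2>k_1$, $p_1/p_2<k_N$, and the vanishing of one coordinate. The paper funnels all slopes not equal to some $k_i$ into the same argument: the limit produces a root of $f_\Delta$ in $(\C^*)^2$ for the vertex $\Delta$ selected by the weight, and this is declared to contradict niceness. But Definition~\ref{def:blabla} only constrains \emph{non-extreme} vertices, so when $\Delta$ is an extreme vertex that step does not literally apply; indeed $f_{\Delta_1}$ may genuinely vanish on $(\C^*)^2$ (its zeros are exactly the axis zeros of $g_1$). Your route instead observes that in these regimes the rescaled curve limits onto a zero of $g_1$ (resp.\ $\widehat g_N$) on the axis $w=0$, that Condition~(i) of inner non-degeneracy supplies regularity of $f_{P_1}$ (resp.\ $f_{P_N}$) precisely at such points, and that these axis zeros are by construction part of $L_1$ (resp.\ $L_N$), so the implicit-function-theorem uniqueness already established in Lemma~\ref{thm} forces the curve into $\mathcal{C}_1$ (resp.\ $\mathcal{C}_N$). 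This is the more robust argument: it uses Condition~(i) where the paper leans on niceness beyond its stated scope, and it correctly accounts for zeros of $f$ with $v=0$ but $u\neq 0$ (or $u=0$, $v\neq0$) even when the corresponding face function is not a pure power. Two small points to tighten if you write this up: justify that $V_f\setminus\bigcup_i\mathcal{C}_i$ is semi-algebraic so that the curve selection lemma applies (the paper sidesteps this by selecting curves inside connected components of $V_f\cap D^4_\rho\cap(\C^*)^2$), and note explicitly that the rank-two condition needed for the implicit function theorem at an axis zero of $g_1$ comes from the $(\re u,\im u,t)$-derivatives, since radial homogeneity kills the $r$-derivative of $f_{P_1}$ along its zero set.
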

\begin{proof}
Take $\rho>0$ sufficiently small and note that the set $V_f\cap D_{\rho}^4\cap(\C^*)^2$ is a semi-algebraic set. Therefore any of its connected components is a semi-algebraic set. We can thus apply the curve selection lemma to each of its connected components.

Suppose now that the link of the singularity has a component $K$ besides $L([L_1,L_2,\ldots,L_{N-1}],L_N)$ and different from the core $\{(0,\rme^{\rmi t})\}_{t\in[0,2\pi]}$ and the core $\{(\rme^{\rmi \varphi},0)\}_{\varphi\in[0,2\pi]}$. Then by the curve selection lemma there is a real-analytic path $\gamma:[0,\varepsilon)\to(\mathbb{C^*})^2$ with $\gamma(0)=O$ and $\gamma(\chi)$ in the connected component of $V_f\cap D_{\rho}^4\cap(\C^*)^2$ corresponding to $K$ for all $\chi\in(0,\varepsilon)$. We write $u_*(\chi)$ and $v_*(\chi)$ for the $u$- and $v$-coordinate of this path. It follows that $u_*(\chi)$ and $v_*(\chi)$ are analytic functions of $\chi$.

%Neither $u_*(\chi)$ nor $v_*(\chi)$ are constant 0 and hence both have a power series expansion around 0. If $u_*(\chi)$ is constant zero, it corresponds to a subset of the vanishing set of $f_{P_1}$, contradicting the assumption. If $v_*(\chi)$ is constant zero, it corresponds to a component of the vanishing set of $f_{P_N}$, which also contradicts the assumption. 

Since the curve lies in $(\C^*)^2\cup\{O\}$, neither of the two power series vanishes completely. Let $k$ be the quotient of the orders of these power series, i.e., if $u_*(\chi)=a_{j}\chi^{j}+h.o.t.$ and $v_*(\chi)=b_{j'}\chi^{j'}+h.o.t.$, then $k=j/j'$.
Note that this implies that $$\lim_{\chi\to 0}u_*(\chi)/|v_*(\chi)|^{k}=\tfrac{a_{j}}{|b_{j’}|^k}$$ is a finite, well-defined value.

If $k=k_i$ for some 1-face $P_i$ of the Newton boundary, then $\left(\tfrac{u_*(\chi)}{|v_*(\chi)|^{k_i}},v_*(\chi)\right)$ is a root of $f_i$ for every $\chi\in[0,\varepsilon)$. In particular, $\lim_{\chi\to 0}|u_*(\chi)/|v_*(\chi)|^{k_i}=\tfrac{a_{j}}{|b_{j’}|^{k_i}}$ is a  root of $g_i$ with non-zero $u$-coordinate. Therefore, it lies on $L_i$ and the path $(u_*(\chi),v_*(\chi))$ lies on the component of $V_f\cap D_{\rho}^4\cap(\C^*)^2$ corresponding to $L_i$, which is a contradiction.

Therefore, $k\neq k_i$ for any 1-face $P_i$ of the Newton boundary of $f$. In this case, it corresponds to the weight vector of some vertex $\Delta$ of the Newton boundary, say with coordinates $(m,n)$. Assume that $k>k_N$. Then we may define $f_{(m,n)}$ via
\begin{equation}
    f(u,\bar{u},r\rme^{\rmi t},r\rme^{-\rmi t})=r^{km+n}f_{(m,n)}\left(\frac{u}{r^k},\frac{\bar{u}}{r^k},r,t\right).
\end{equation}
Note that we have (like for the $f_i$s defined for 1-faces) $r^{km+n}f_{(m,n)}\left(\frac{u}{r^k},\frac{\bar{u}}{r^k},0,t\right)=f_{\Delta}(u,\bar{u},v,\bar{v})$.

The curve $\left(\tfrac{u_*(\chi)}{|v_*(\chi)|^{k}},\tfrac{\overline{u_*(\chi)}}{|v_*(\chi)|^{k}},|v_*(\chi)|,\arg(v_*(\chi))\right)$ is in $V_{f_{(m,n)}}$ for all sufficiently small $\chi>0$. It follows that the limit value $$\lim_{\chi\to 0}u_*(\chi)|/|v_*(\chi)|^{k}=\tfrac{a_{j}}{|b_{j’}|^k}$$ is the non-zero $u$-coordinate of a root of $f_{(m,n)}(u,\bar{u},0,t)$. Since  $f_\Delta(u,\bar{u},v,\bar{v})=r^{km+n} f_{(m,n)}\left(\frac{u}{r^k},\frac{\bar{u}}{r^k},0,t\right)$, this implies that there exists a corresponding root of $f_{\Delta}$ in $(\C^*)^{2}$, which contradicts the assumption that $f$ has a nice Newton boundary.

If $k<k_N$, the vertex $\Delta$ is the extreme vertex on $\Delta(P_N)$ and we define $f_{(m,n)}$ accordingly:
\begin{equation}
f(R\rme^{\rmi \varphi},R\rme^{-\rmi\varphi},v,\bar{v})=R^{n/k+m}f_{(m,n)}\left(R,\varphi,\frac{v}{R^{1/k}},\frac{\bar{v}}{R^{1/k}}\right).
\end{equation}
Analogously to the previous case, the limit value $$\lim_{\chi\to 0}v_*(\chi)|/|u_*(\chi)|^{1/k}=\tfrac{b_{j'}}{|a_{j}|^{1/k}}$$ would constitute a root of $f_{(m,n)}(0,\varphi,v,\bar{v})$, leading to a root of $f_{\Delta}$ in $(\C^*)^2$ and thereby contradicting the niceness assumption. Therefore $V_f\cap D_{\rho}^4\cap(\C^*)^2$ has no connected components besides the different $L_i$.

Suppose there is another component of $V_f\cap D_{\rho}^4\backslash\{O\}$ that is contained in $\{(u,0):u\in\mathbb{C}\}$, i.e., $v=0$. Then since all points of $V_f\cap D_{\rho}^4\backslash\{O\}$ are regular, this extra component is a 2-dimensional manifold. In other words, for $v=0$ the roots of $f(u,\bar{u},0,0)$ are a 2-dimensional subset of $\mathbb{C}$. But that means that $f(u,\bar{u},0,0)=0$ for all $u\in\mathbb{C}$. Thus $f$ is not $u$-convenient and $\{(u,0):u\in\mathbb{C}\}$ is a part of $L_N$.

Similarly, if there is another component with $u=0$, then $f$ is not $v$-convenient and the corresponding component is a component of $L_1$. Therefore, the link of the singularity is $L([L_1,L_2,\ldots,L_{N-1}],L_N)$.
\end{proof}

\begin{corolario}\label{corcomponents}
Let $f:\mathbb{C}^2\to\mathbb{C}$ be a mixed polynomial with a nice inner non-degenerate boundary. Then the number of components of the link of its weakly isolated singularity is equal to the sum of the number of components of $L_i$.
In particular, if a knot $K$ is the link of a weakly isolated singularity of a true mixed polynomial $f$ with a nice Newton inner non-degenerate boundary, then the Newton boundary of $f$ has exactly one compact 1-face. Therefore, there is a radially weighted homogeneous polynomial with a weakly isolated singularity and $K$ as the link of that singularity.
\end{corolario}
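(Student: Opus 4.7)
The first claim is essentially bookkeeping from Theorem~\ref{thmmixed}. My plan is to observe that the link of the singularity is $L([L_1,L_2,\ldots,L_{N-1}],L_N)$, and unpack Definitions~\ref{def:russiandoll2} and~\ref{def:russiandoll3}. The construction $[L_1,L_2,\ldots,L_{N-1}]$ takes a disjoint union of the links $L_i$ after rescaling the $u$-coordinate by $\varepsilon^{k_i}$ with the $k_i$ strictly decreasing; for $\varepsilon>0$ sufficiently small, the rescaled copies sit inside pairwise disjoint subsets of $\mathbb{C}\times S^1$, so no new intersections or identifications are introduced. Then $L([L_1,\ldots,L_{N-1}],L_N)$ is the disjoint union $\widetilde{K_1}\cup\widetilde{K_2}$ of embeddings into the two complementary solid tori of $S^3$, which are also disjoint for small $\varepsilon$. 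Consequently the number of connected components of the link of the singularity equals $\sum_{i=1}^{N}(\text{components of }L_i)$.

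For the ``in particular'' statement I would argue as follows. Suppose the link of the singularity is a knot $K$, i.e.\ has exactly one connected component. By the count just established, exactly one of the $L_i$ has a single component and all the others are empty. But $f$ is true, so for every $i$ the face function $f_{P_i}$ has a non-empty vanishing set in $(\mathbb{C}^*)^2$. Translating this via Eq.~\eqref{decomp2} into the language of $g_i$ (and $\widehat{g}_N$ for $i=N$), this forces $L_i$ to contain at least one component with non-vanishing $u$-coordinate, hence $L_i\neq\varnothing$. The only way to reconcile this with exactly one non-empty $L_i$ is $N=1$.

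Once $N=1$, the Newton principal part $f_\Gamma=f_{P_1}$ consists of monomials all lying on the single compact $1$-face $\Delta(P_1)$, so $f_\Gamma$ is radially weighted homogeneous of type $(P_1,d(P_1;f))$ by construction. The Newton boundary of $f_\Gamma$ is identical to that of $f$, so $f_\Gamma$ also has a nice inner non-degenerate boundary, and applying Theorem~\ref{thmmixed} (or the weakly isolated part of Proposition~\ref{weak-isolated}) to $f_\Gamma$ yields a weakly isolated singularity at the origin whose link is again $L_1=K$. Thus $f_\Gamma$ is the desired radially weighted homogeneous polynomial realizing $K$.

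I do not expect any serious obstacle here: both parts are direct consequences of Theorem~\ref{thmmixed} combined with the explicit nested nature of Definitions~\ref{def:russiandoll2} and~\ref{def:russiandoll3}. The only point requiring a little care is the implication ``$f$ true $\Rightarrow L_i\neq\varnothing$'', where one must check that a zero of $f_{P_i}$ in $(\mathbb{C}^*)^2$ indeed produces a non-empty component of $L_i$ away from the cores $\{u=0\}$ and $\{v=0\}$; this follows immediately from the defining rescaling Eq.~\eqref{decomp2} together with the analogous identity for $\widehat{g}_N$.
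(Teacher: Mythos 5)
Your argument is correct and follows exactly the route the paper intends: the corollary is stated without proof as an immediate consequence of Theorem~\ref{thmmixed}, with the component count read off from the nested, disjoint construction of Definitions~\ref{def:russiandoll2} and~\ref{def:russiandoll3}, trueness forcing each $L_i\neq\varnothing$ and hence $N=1$ for a knot, and $f_{\Gamma}=f_{P_1}$ serving as the radially weighted homogeneous realization. No gaps.
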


Compare this with Theorem 43 of Oka \cite{Oka2010}, which calculates the number of link components as the sum of the number of components of $L_i$ (summed over $i$) under the assumption that all vertices of $\Gamma(f)$ are simple.

Another application of our theorems is analogous to a result by Eyral and Oka \cite{EyralOka2018} for the class of non-convenient strongly non-degenerate mixed polynomial with isolated singularity.
\begin{corolario}\label{corbeconv}
Let $f$ be a non-convenient mixed polynomial, whose boundary is nice and Newton inner non-degenerate, and let $M_1(v,\bar{v})$ and $M_2(u,\bar{u})$ be mixed polynomials satisfying that the smallest exponents of the variable $|v|$ in $M_1$ and of the variable $|u|$ in $M_2$ are greater than $d(P_1;f)/p_{1,2}$ and $d(P_{N};f)/p_{N,1}$, respectively. Then $f+M_1+M_2$ is a convenient weak realization of $L_{f}$.  
\end{corolario}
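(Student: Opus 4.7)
The plan is to verify that $\tilde{f} := f + M_1 + M_2$ satisfies the hypotheses of Theorem~\ref{thmmixed}, to apply the theorem, and then to contract two ``extra'' components onto the coordinate axes to recover $L_f$.

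First I would read off the Newton boundary of $\tilde f$: the condition $b_0 > d(P_1;f)/p_{1,2}$ on the minimal $|v|$-exponent $b_0$ of $M_1$ places the new support point $(0, b_0)$ strictly above the line through $\Delta(P_1)$, so that $\Gamma_+(\tilde f)$ retains $\Delta(P_1),\ldots,\Delta(P_N)$ with unchanged face functions $\tilde f_{P_i}=f_{P_i}$ and gains a new extreme $1$-face $\Delta(P_0)$ from $(a_1,b_1)$ to $(0,b_0)$ with weight vector $P_0 \succ P_1$ (so $k_0 > k_1$). Symmetrically, $M_2$ produces a new extreme $1$-face $\Delta(P_{N+1})$ with $k_{N+1}<k_N$, making $\tilde f$ convenient.

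The structural heart of the argument is that inner non-degeneracy of $f$ at $\Delta(P_1)$ forces $a_1=1$. Since $f$ is not $v$-convenient, every monomial at $(a_1,b_1)$ is divisible by $u$ or $\bar u$, so $\{u=0,\ v\neq 0\}\subset V_{f_{P_1}}$; a direct computation of the real Jacobian of $f_{P_1}$ at such a point shows that all four partials vanish unless $f_{P_1}$ contains monomials with $\nu_1+\mu_1=1$, i.e.\ $a_1=1$. The vertex function then has the shape $f_{(1,b_1)}=u\,A(v,\bar v)+\bar u\,B(v,\bar v)$, and the same Jacobian reduces regularity at $(0,v)$ to the single modulus inequality $|A(v,\bar v)|\neq|B(v,\bar v)|$ for every $v\neq 0$. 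I would exploit this inequality twice: first, the $\mathbb R$-linear equation $uA+\bar u B=0$ admits only $u=0$, so $V_{f_{(1,b_1)}}\cap(\C^*)^2=\emptyset$ and $\tilde f$ is nice at its newly non-extreme vertex $(1,b_1)$; second, the singular polynomial $s_{2,\tilde f_{P_0}}=|A|^2-|B|^2$ never vanishes on $\C\times\C^*$, so the new extreme face function $\tilde f_{P_0}=uA+\bar u B+C$ (with $C$ the minimal-degree part of $M_1$) satisfies condition~(i) of inner non-degeneracy. The symmetric analysis forces $b_{N+1}=1$, gives niceness at $(a_N,1)$, and yields condition~(i) for $\tilde f_{P_{N+1}}$. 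Together with $\tilde f_{P_i}=f_{P_i}$ for $i=1,\ldots,N$, this establishes that $\tilde f$ has a nice inner non-degenerate boundary; Proposition~\ref{weak-isolated} then supplies the weakly isolated singularity.

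Theorem~\ref{thmmixed} now yields $L_{\tilde f}=L([\tilde L_0,\tilde L_1,\ldots,\tilde L_N],\tilde L_{N+1})$. Because $a_1=1$, the rescaled function
\[
g_0(w,\bar w,\rme^{\rmi t})=w\,A(\rme^{\rmi t},\rme^{-\rmi t})+\bar w\,B(\rme^{\rmi t},\rme^{-\rmi t})+C(\rme^{\rmi t},\rme^{-\rmi t})
\]
is affine $\mathbb R$-linear in $(w,\bar w)$ at each $t$, and the modulus inequality makes the $2\times 2$ linear system invertible, giving a unique solution $w(t)$ for every $t$. Hence $\tilde L_0$ is a $1$-strand closed braid in $\C\times S^1$, i.e.\ a single section of $\C\times S^1\to S^1$. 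In Definition~\ref{def:russiandoll2} it is scaled by $\varepsilon^{k_0}\ll\varepsilon^{k_1}$, so it sits in a tubular neighborhood of the core $\alpha=\{u=0\}$ disjoint from all of $\tilde L_1,\ldots,\tilde L_N$. The linear homotopy $w_s(t)=(1-s)w(t)$ then isotopes $\tilde L_0$ onto $\alpha$ through embeddings supported entirely in this tubular neighborhood, fixing the other components setwise. The symmetric argument contracts $\tilde L_{N+1}$ onto the core $\beta=\{v=0\}$. After these two isotopies the link matches $L_f=L([L_1,\ldots,L_{N-1}],L_N)$ component by component: the contracted cores play the role of the axis components of $L_1$ and $L_N$, and the remaining components coincide since the face functions on $\Delta(P_1),\ldots,\Delta(P_N)$ are unchanged.

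The main obstacle is the structural deduction in the second paragraph: extracting from inner non-degeneracy the twin constraints $a_1=1$ and $|A|\neq|B|$. This single modulus inequality is what simultaneously secures niceness of $\tilde f$ at the newly non-extreme vertices and forces $\tilde L_0$ and $\tilde L_{N+1}$ to be $1$-component, which is the topological requirement for the final contraction onto the coordinate axes.
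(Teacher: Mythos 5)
Your proposal is correct and follows essentially the same route as the paper: both arguments rest on the observation that non-convenience plus Condition~(i) of inner non-degeneracy forces the extreme vertices to lie at distance $1$ from the axes, so the new face functions created by $M_1$ and $M_2$ are $\mathbb{R}$-affine in $u$ (resp.\ $v$) with invertible linear part, producing a single new component isotopic, within a tubular neighbourhood, to the axis component of $L_f$. The paper phrases this as a direct implicit-function-theorem argument on $F_{P_0}$ citing the proof of Theorem~\ref{thm:main}, whereas you verify the hypotheses of Theorem~\ref{thmmixed} for $f+M_1+M_2$ wholesale and then contract $\tilde L_0$ and $\tilde L_{N+1}$ onto the cores; this is a presentational difference, not a different method.
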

\begin{proof}
Suppose that $f$ is not $v$-convenient. Then $L_0:=S^3_{\rho} \cap \{u=0\}$ is a link component of $L_1\subset L_f$. Take $F=f+M_1$ , where $M_1$ is a sum of monomials which satisfy that the exponent in variable $|v|$ is greater than  $d(P_1;f)/p_{1,2}$. Then $\mathcal{P}(F)= \{P_0\}\cup \mathcal{P}(f)$ for some weight vector $P_0$ with $k_0>k_i$ for all $i=1,2,\dots,N$. Note that there exists a $d>0$ and functions $\phi_1$, $\phi_2$ satisfying $$F_{P_0}(u,\bar{u},re^{it},re^{-it})=r^d(\phi_1(e^{it},e^{-it})u+\phi_2(e^{it},e^{-it})\bar{u}+r^{-d}(M_1)_{P_0}(re^{it},re^{it})),$$ where $\lim_{r\to 0} r^{-d}(M_1)_{P_0}(re^{it},re^{it}))=0$. By Condition~(i) of Definition~\ref{Newtoncond} we have that $(u=0,\rme^{\rmi t})$ is a regular point of $\phi_1(e^{it},e^{-it})u+ \phi_2(e^{it},e^{-it})\bar{u}$. Thus we can apply the implicit function theorem and the same arguments as in the proof of Theorem~\ref{thm:main} to show that the link associated to  $F_{P_0}$ is isotopic to $L_0$. In fact, this isotopy does not leave a tubular neighbourhood of $L_0$. The other components of $L_{f+M_1+M_2}$ are isotopic to those of $L_f$, since for every $P_i$, $i=1,2,\ldots,N$, the polynomial $M_1$ lies above $\Delta(P_i,f)$ and hence adding $M_1$ does not result in a change of the topology of $L_i$ nor of the way that it is linked with the other components. Following the same procedure if $f$ is not $u$-convenient we obtain that $f(u,\bar{u},v,\bar{v})+M_1(v,\bar{v})+M_2(u,\bar{u})$ is a convenient mixed polynomial and weak realization of $L_f$.     
\end{proof}

\section{STRONGLY INNER NON-DEGENERATE MIXED POLYNOMIALS}\label{section6}

Mixed polynomials with isolated singularities are of particular interest, since they are much rarer than weakly isolated singularities and harder to construct. In this section we show that under a strong Newton inner non-degeneracy condition of the boundary we get an isolated singularity at the origin.
\begin{definition}\label{def-strong}
We say that $f$ has a strongly Newton inner non-degenerate boundary if both of the following conditions hold:
\begin{itemize}
    \item[i.] the face functions $f_{P_1}$ and $f_{P_N}$ have no critical points in $ \C^2\setminus \{v=0\}$ and $\C^2\setminus \{u=0\}$, respectively.
\item[ii.] on each 1-face and non-extreme vertex $\Delta$, the face function $f_\Delta$ has no critical points in $(\C^*)^{2}$.
\end{itemize}
\end{definition}

Following the same idea of the proof of Proposition~\ref{weak-isolated} we get. 
\begin{prop}\label{strong-isolated}
Let $f:\C^2 \to \C$ be a mixed polynomial with a strongly Newton inner non-degenerate boundary. Then $f$ has an isolated singularity at the origin.   
\end{prop}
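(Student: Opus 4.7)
The plan is to repeat the curve selection argument of Proposition~\ref{weak-isolated} almost verbatim, dropping the equation $f(z(\tau))=0$ and replacing appeals to the weak non-degeneracy conditions by the corresponding strong ones from Definition~\ref{def-strong}. Concretely, I would assume for contradiction that the origin is not an isolated singularity of $f$. By the curve selection lemma applied to $\Sigma_f$ alone (rather than to $\Sigma_f\cap V_f$), there is a non-constant real-analytic arc $z(\tau)=(a\tau^{p_1}+\text{h.o.t.},\, b\tau^{p_2}+\text{h.o.t.})$ in $\Sigma_f$ with $z(0)=O$. Let $P=(p_1,p_2)$ be the associated weight vector.

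The equations $s_{1,f}(z(\tau))=s_{2,f}(z(\tau))=s_{3,f}(z(\tau))=0$ expand in powers of $\tau$ exactly as in \eqref{eqweak1}--\eqref{eqweak3}; the only change from Proposition~\ref{weak-isolated} is that \eqref{eqweak4} is no longer available. Suppose first that $(a,b)\in(\C^*)^2$ and $k_1\ge p_1/p_2 \ge k_N$, so that $\Delta(P)$ is a 1-face or a non-extreme vertex. Comparing lowest-order coefficients in $\tau$ in \eqref{eqweak1}--\eqref{eqweak3} and applying Lemma~\ref{facefunc} in each of the nine subcases (according to whether $f_P$ is semiholomorphic in $u,\bar u,v,\bar v$ or depends on both), the relations reduce to $s_{1,f_P}(a,b)=s_{2,f_P}(a,b)=s_{3,f_P}(a,b)=0$. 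Hence $(a,b)\in\Sigma_{f_P}\cap(\C^*)^2$, which directly contradicts Condition~(ii) of Definition~\ref{def-strong}.

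The remaining possibilities are that $a=0$ or $b=0$, or that $p_1/p_2>k_1$, or that $p_1/p_2<k_N$. These correspond to the extreme vertices and are handled in the same way as the last part of the proof of Proposition~\ref{weak-isolated}: in the $P_1$ case one writes $f_{P_1}(z,\bar z)=A(v,\bar v)+B(v,\bar v)u+C(v,\bar v)\bar u+D(z,\bar z)$ with $\deg_R D>1$, and extracts from the leading terms in $\tau$ of $s_{1,f}$, $s_{2,f}$, $s_{3,f}$ a system analogous to \eqref{conv} or \eqref{noconv} (now without the last equation, since there is no $f=0$ condition to impose). Such a system forces some point in $\C^2\setminus\{v=0\}$ to be a critical point of $f_{P_1}$, contradicting Condition~(i) of Definition~\ref{def-strong} precisely because the strong version drops the requirement that the critical point also lie in $V_{f_{P_1}}$. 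The $P_N$ case is symmetric.

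The main obstacle is verifying that the $V_f$-equation \eqref{eqweak4} was never essential in the bookkeeping of the subcases of the weak proof: its removal should only weaken the conclusions from ``critical point in $V_{f_\Delta}\cap(\C^*)^2$'' to ``critical point in $(\C^*)^2$'', and analogously at the extreme vertices. Since these weaker conclusions are exactly what Definition~\ref{def-strong} rules out, each case goes through as before and the contradiction closes the argument.
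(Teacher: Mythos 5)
Your proposal is correct and is essentially the paper's own proof: the authors likewise run the curve selection argument of Proposition~\ref{weak-isolated} on $\Sigma_f$ alone, drop Eq.~\eqref{eqweak4}, and observe that the resulting lowest-order systems contradict Conditions (i) and (ii) of Definition~\ref{def-strong} in the same case division ($k_1\geq p_1/p_2\geq k_N$, the two extreme-vertex cases, and the vanishing of a coordinate). Your closing remark that removing the $V_f$-equation only weakens the conclusion from ``critical point in $V_{f_\Delta}$'' to ``critical point'' is exactly the observation the paper relies on.
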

\begin{proof}
The proof is almost identical to that of Proposition~\ref{weak-isolated}. Assuming that the singularity is not isolated results via the curve selection lemma in a real-analytic curve $z(\tau)$ of critical points starting at the origin. Evaluating the defining equations $s_{1,f}$, $s_{2,f}$ and $s_{3,f}$ of $\Sigma_f$ on this curve yields a system of equations as in Eq.~\eqref{eqweak1}-\eqref{eqweak3} in terms of power series of $\tau$. Comparing lowest order terms yields a solution to the equations that define the strong inner non-degeneracy of $f_{P}$, where $P=(p_1,p_2)$ is the weight vector defined from the lowest order terms of $z(\tau)$ as in Proposition~\ref{weak-isolated}. Again there are the same cases to consider as in the proof of Proposition~\ref{weak-isolated}: $k_1\geq \tfrac{p_1}{p_2}\geq k_N$, $\tfrac{p_1}{p_2}< k_N$, $\tfrac{p_1}{p_2}> k_1$, and the vanishing of one of the complex coordinates of $z(\tau)$. In all of these cases, the arguments from Proposition~\ref{weak-isolated} remain unchanged, but since $z(\tau)$ is no longer assumed to be in $V_f$, it proves by contradiction that the origin is an isolated singularity. 
\end{proof}

As in the case of inner non-degeneracy our notion of strong inner non-degeneracy generalizes Oka's convenient strongly non-degenerate functions.
\begin{prop}
Let $f:\C^2\to\C$ be a convenient mixed function that is strongly non-degenerate in Oka's sense. Then $f$ has a strongly Newton inner non-degenerate boundary.
\end{prop}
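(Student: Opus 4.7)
The plan is to follow closely the structure of the proof of Proposition~\ref{lem:inner}, adapting the argument to the strong setting where we no longer restrict to points in $V_{f_{P_i}}$. Condition (ii) of Definition~\ref{def-strong} will be immediate: strong Oka non-degeneracy asserts that $\Sigma_{f_\Delta}\cap(\C^*)^2=\emptyset$ for \emph{every} compact face $\Delta$, which in particular covers all 1-faces and all non-extreme vertices. So the work lies entirely in establishing condition (i), i.e., that $f_{P_1}$ has no critical points in $\C^2\setminus\{v=0\}$ (and, symmetrically, that $f_{P_N}$ has no critical points in $\C^2\setminus\{u=0\}$). By symmetry I will only describe the argument for $f_{P_1}$.

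Suppose for contradiction that $(u_*,v_*)$ is a critical point of $f_{P_1}$ with $v_*\neq0$. If $u_*\neq0$, then $(u_*,v_*)\in\Sigma_{f_{P_1}}\cap(\C^*)^2$, contradicting strong Newton non-degeneracy of the face function $f_{P_1}$. So the nontrivial case is $u_*=0$. Here I would exploit $v$-convenience of $f$ to locate the extreme vertex $\Delta$ of $\Delta(P_1)$ on the vertical axis. The vertex $\Delta$ has first coordinate $0$, so every monomial of $f_\Delta$ is $u$- and $\bar{u}$-independent and $f_{P_1}(0,v,\bar{v})=f_\Delta(v,\bar{v})$. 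Consequently,
\begin{equation*}
f_{P_1,v}(0,v_*)=f_{\Delta,v}(v_*,\bar{v}_*),\qquad f_{P_1,\bar{v}}(0,v_*)=f_{\Delta,\bar{v}}(v_*,\bar{v}_*).
\end{equation*}

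The key step is then to feed this into Oka's characterization of $\Sigma_{f_{P_1}}$ from Definition~\ref{conjuntosingular}. The equation $s_{3,f_{P_1}}(0,v_*)=0$ becomes $|f_{\Delta,v}(v_*,\bar{v}_*)|^2=|f_{\Delta,\bar{v}}(v_*,\bar{v}_*)|^2$. Because $f_\Delta$ depends only on $v$ and $\bar{v}$, its partial derivatives with respect to $u$ and $\bar{u}$ vanish identically, so $s_{1,f_\Delta}$ and $s_{2,f_\Delta}$ vanish at every point of $\C^2$. Hence, at \emph{any} point $(u,v_*)$ with $u\neq0$, all three of Oka's equations for $\Sigma_{f_\Delta}$ are satisfied simultaneously, producing a critical point of $f_\Delta$ in $(\C^*)^2$. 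This contradicts the strong Newton non-degeneracy of $f$ at the face $\Delta$.

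The symmetric argument works at the other extreme: $u$-convenience of $f$ provides an extreme vertex $\Delta'$ of $\Delta(P_N)$ on the horizontal axis, and a critical point of $f_{P_N}$ with $u_*\neq0, v_*=0$ is upgraded via the $s_{2,f_{P_N}}=0$ equation to a critical point of $f_{\Delta'}$ in $(\C^*)^2$. I do not anticipate a genuine obstacle; the argument is a clean strengthening of Proposition~\ref{lem:inner}, and if anything it is shorter since we never need to assume the point lies on $V_{f_{P_i}}$. The one place to be careful is in noticing that $v$-convenience (not merely convenience on the overall Newton boundary) is exactly what places the extreme vertex on the vertical axis and makes $f_\Delta$ independent of $u,\bar{u}$; this is where the convenience hypothesis is actually used.
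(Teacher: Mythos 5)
Your proposal is correct and follows essentially the same route as the paper: condition (ii) is immediate from Oka's strong non-degeneracy, and for condition (i) the only case needing work is a critical point of $f_{P_1}$ at $(0,v_*)$ with $v_*\neq 0$, which (since convenience forces the extreme vertex function $f_\Delta$ to depend only on $v,\bar{v}$, so that $s_{1,f_\Delta}\equiv s_{2,f_\Delta}\equiv 0$ and $s_{3,f_{P_1}}|_{u=0}=s_{3,f_\Delta}$) produces a point of $\Sigma_{f_\Delta}\cap(\C^*)^2$, contradicting strong non-degeneracy of the vertex face function. The paper phrases this contrapositively via the system in Eq.~(\ref{almostnondeg}), but the content is identical.
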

\begin{proof}
As in the proof of Proposition~\ref{lem:inner} we only have to show that a critical point of $f_{\Delta}$ in $(\C^*)^2$ for $\Delta$ an extreme vertex of $f$ leads to a contradiction with Condition (i) in the definition of strong inner non-degeneracy. Assume that $\Delta$ is the extreme vertex lying on $\Delta(P_1)$. Since $f$ is convenient, $f_{\Delta}$ takes the form $f_{\Delta}(u,\bar{u},v,\bar{v})=A(v,\bar{v})$ for some function $A:\C\to\C$. Thus $s_{1,f_{\Delta}}=s_{2,f_{\Delta}}=0$ and a critical point of $f_{\Delta}$ in $(\C^*)^2$ corresponds to a non-zero root of $s_{3,f_{\Delta}}=|A_v(v,\bar{v})|^2-|A_{\bar{v}}(v,\bar{v})|^2$.

Condition (i) in Definition~\ref{def-strong} is equivalent to the non-existence of solutions $\{(0,v):v\neq0\}$ to the first three equations in Eq.~(\ref{almostnondeg}). In particular at $u=0$, the third equation is $s_{3,f_{P_1}}|_{u=0}=s_{3,f_{\Delta}}$. Hence if $f$ is strongly non-degenerate, there are no non-zero roots of $s_{3,f_{\Delta}}$ and thus Condition (i) is satisfied for $f_{P_1}$. The arguments for $f_{P_N}$ are analogous.
\end{proof}

\section{REAL ALGEBRAIC LINKS}\label{section7}
We prove that for certain braids of the form $B(B_1,B_2,\ldots,B_N)$ we can construct semiholomorphic polynomials that are strongly inner non-degenerate with the closure of $B(B_1,B_2,\ldots,B_N)$ as the link of a singularity. By Proposition~\ref{strong-isolated} the singularity at the origin is not only weakly isolated, but isolated and hence the closure of $B(B_1,B_2,\ldots,B_N)$ is real algebraic.

In order to characterize the sequences of braids $(B_1,B_2,\ldots,B_N)$ for which this procedure works, we need to generalize the concept of P-fiberedness from \cite{bodesat}. Recall that a braid is P-fibered if the corresponding loop in the space of polynomials $g(\cdot,\rme^{\rmi t})$, with $g^{-1}(0)=B$, defines a fibration via $\arg(g):(\C\times S^1)\backslash B\to\C$.

\begin{definition}\label{def:mult}
Let $B$ be a geometric braid on $s$ strands, parametrized as in Eq.~\eqref{eq:braidpara}, but with the property that $u_j(t)\neq 0$ for all $t\in[0,2\pi]$. That is, none of the strands intersects the line $\{0\}\times[0,2\pi]\subset\C\times[0,2\pi]$. Let $g(\cdot,\rme^{\rmi t})$ be the corresponding loop of polynomials given by $g(u,\rme^{\rmi t})=\prod_{j=1}^s(u-u_j(t))$. Then we say that $B$ is \textbf{P-fibered with $O$-multiplicity $m$} if $\arg(u^m g):(\C\times S^1)\backslash(B\cup(\{0\}\times S^1))\to S^1$ is a fibration map, i.e., if it has no critical points.
\end{definition}

%\begin{definition}
%Let $B$ be a geometric braid on $s$ strands, parametrized as in Eq.~\eqref{eq:braidpara}, but with the property that $u_j(t)\neq 0$ for all $t\in[0,2\pi]$. That is, none of the strands intersects the line $\{0\}\times[0,2\pi]\subset\C\times[0,2\pi]$. Let $g(\cdot,\rme^{\rmi t})$ be the corresponding loop of polynomials given by $g(u,\rme^{\rmi t})=\prod_{j=1}^s(u-u_j(t))$. Then we say that $B$ is \textbf{P-fibered with $O$-multiplicity $m$ and leading coefficient $a(t)$} if $\arg(a(t)u^m g):(\C\times S^1)\backslash(B\cup(\{0\}\times S^1))\to S^1$ is a fibration map, i.e., if it has no critical points, for some non-vanishing finite Fourier series $a:S^1\to\C*$ written as a polynomial in $\rme^{\rmi t}$ and $\rme^{-\rmi t}$.
%\end{definition}

P-fibered braids of $O$-multiplicity 0 are P-fibered braids. If $B$ is a P-fibered braid of $O$-multiplicity 1, then $B\cup(\{0\}\times[0,2\pi])$ is a P-fibered braid. If the $O$-multiplicity $m$ is greater than 1, the closure of $B\cup(\{0\}\times[0,2\pi])$ is not necessarily a fibered link, since in this case $\arg(u^m g)$ does not take the required form on a tubular neighbourhood of $\{0\}\times S^1$.
Note that, just like for P-fibered braids, a geometric braid $B$ is P-fibered with $O$-multiplicity $m$, if and only if $B^n$ is P-fibered with $O$-multiplicity $m$ for all $n\in\mathbb{Z}\backslash\{0\}$.

\begin{teo}\label{cor}
Let $(B_1,B_2,\ldots,B_N)$ be a sequence of geometric braids such that $B_i$ consists of $s_i$ strands and is P-fibered with $O$-multiplicity $m_i:=\sum_{j<i}s_j$ for all $i\in\{1,2,\ldots,N\}$. Let $r_N:=1$.\\ Then for every sequence of positive integers $(r_j,r_{j+1},\ldots,r_{N})$ there is a positive integer $N_{j-1}$ such that the closure of $B(B_1^{2r_1},B_2^{2r_2},\ldots,B_{N-1}^{2r_{N-1}},B_N^{2r_N})$ is real algebraic if $r_j>N_j$ for all $j=1,2,\ldots,N-1$.
\end{teo}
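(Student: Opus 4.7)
The plan is to promote the weakly isolated construction from Proposition~\ref{prop:boundary} to an isolated one by exploiting the strong inner non-degeneracy criterion of Proposition~\ref{strong-isolated}. Specifically, for any data as in the hypothesis, I will construct a semiholomorphic boundary polynomial $f$ whose Newton boundary has $N$ compact 1-faces with weight vectors $P_1,\ldots,P_N$, whose $i$-th face function encodes the braid $B_i^{2r_i}$ (together with $m_i$ strands stacked on $\{u=0\}$), and whose boundary is strongly inner non-degenerate. Proposition~\ref{strong-isolated} will then give an isolated singularity at the origin, and Proposition~\ref{prop:boundary} will identify its link as the closure of $B(B_1^{2r_1},\ldots,B_N^{2r_N})$.

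For each $i$, starting from a parametrisation of $B_i^{2r_i}$ and forming the loop of monic polynomials $g_i(u,\rme^{\rmi t})=\prod_{\ell}(u-u_{i,\ell}(t))$, all exponents of $\rme^{\pm\rmi t}$ are even since $B_i^{2r_i}$ is an even power. Multiplying by $u^{m_i}$ yields $\tilde g_i=u^{m_i}g_i$ of $u$-degree $S_i:=m_i+s_i=\sum_{j\leq i}s_j$. Choosing $k_i:=p_{i,1}/p_{i,2}$ sufficiently large and a suitable $n_i$, define
\begin{equation}
f_{P_i}(u,v,\bar v)\;=\;r^{k_iS_i+n_i}\,\tilde g_i\!\left(\tfrac{u}{r^{k_i}},\rme^{\rmi t}\right),
\end{equation}
where $r=|v|$, $\rme^{\rmi t}=v/r$. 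This is a semiholomorphic radially weighted homogeneous polynomial of type $(P_i,d_i)$. By the P-fiberedness of $B_i$ with $O$-multiplicity $m_i$, the map $\arg(\tilde g_i)$ has no critical points on $(\C\times S^1)\setminus(B_i^{2r_i}\cup\{u=0\}\times S^1)$, which via Eq.~\eqref{eq:argcrit} translates to: $f_{P_i}$ has no critical points in $\C\times\C^{*}$ off $\{u=0\}$. For $i=1$ this means no critical points in $\C\times\C^*$ at all (verifying Condition (i) of Definition~\ref{def-strong} for $f_{P_1}$), for $i=N$ the symmetric argument in $u$ handles Condition (i) for $f_{P_N}$, and for $2\leq i\leq N-1$ it yields the face-function part of Condition (ii).

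The inductive construction of the weights proceeds from outside inward. Given $r_N=1$ and the data $(r_{j+1},\ldots,r_N)$, the preceding steps produce $P_{j+1},\ldots,P_N$ and vertices $\Delta_{j+1},\ldots,\Delta_{N-1}$. We then choose $P_j$ with $k_j>k_{j+1}$ and the scaling parameter tuned so that $\Delta(P_j)$ ends at a vertex $\Delta_j=(S_j,a_j)$ lying on $\Delta(P_{j+1})$, supported by the monomial $u^{S_j}v^{a_j}$ (with $\bar v$-degree zero); the corresponding integer constraints on $p_{j,1}$, $p_{j,2}$, $d_j$, together with the requirement that the braid period of $B_j^{2r_j}$ produces integer $r$-exponents, force $r_j$ to exceed some quantitative threshold $N_j=N_j(r_{j+1},\ldots,r_N)$. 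With such choices, the sum $f=\sum_{i=1}^N f_{P_i}$ is a boundary polynomial whose Newton polygon has exactly the 1-faces $\Delta(P_1),\ldots,\Delta(P_N)$ and whose non-extreme vertices $\Delta_i$ carry the monomial $c u^{S_i}v^{a_i}$, which is strongly non-degenerate because its $s_3$-equation $(a_i^2-0)|c|^2|u|^{2S_i}|v|^{2(a_i-1)}$ never vanishes on $(\C^{*})^2$. Both conditions of Definition~\ref{def-strong} are therefore met, Proposition~\ref{strong-isolated} yields an isolated singularity, and Proposition~\ref{prop:boundary} identifies its link with the closure of $B(B_1^{2r_1},\ldots,B_N^{2r_N})$.

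The principal obstacle is the inductive step: the weights $P_j$ must be chosen to simultaneously realise integer $r$-exponents, align the vertex $(S_j,a_j)$ shared with $\Delta(P_{j+1})$, respect the strict monotonicity $k_1>k_2>\cdots>k_N$, and prevent any spurious 1-face or vertex from appearing in the Newton polygon of the sum. It is precisely the interplay of these arithmetic constraints, combined with the geometry of the braid period, that forces the lower bounds $N_j$ on the exponents $r_j$ appearing in the statement.
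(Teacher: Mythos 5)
Your overall skeleton is the right one (build a semiholomorphic boundary polynomial face by face from the $\tilde g_i=u^{m_i}g_i$, verify Definition~\ref{def-strong}, then apply Proposition~\ref{strong-isolated} together with the link identification), but the proposal misses the central mechanism of the proof and misattributes the origin of the thresholds $N_j$. If you simply set $f=\sum_i f_{P_i}$ with each $f_{P_i}=r^{k_iS_i+n_i}\tilde g_i(u/r^{k_i},\rme^{2r_i\rmi t})$, the faces do not glue correctly: the lowest-order term in $u$ of $f_{P_i}$ is $u^{m_i}$ times the coefficient $a_i(r,t)$, a generally non-constant trigonometric polynomial in $t$ (it equals $\prod_\ell(-u_{i,\ell}(2r_it))$ up to a power of $r$), while the highest-order term of $f_{P_{i-1}}$ is $u^{m_i}$ times a constant multiple of a power of $r$ (monicity). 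These two contributions land on the same vertex of the Newton polygon, so the vertex face function of the sum is their superposition, which can vanish for some $t$ (destroying niceness and Condition~(ii) of Definition~\ref{def-strong}), and the 1-face function $f_{\Delta(P_{i-1})}$ of the sum acquires an extra top-degree term $c(t)u^{m_i}$ whose leading coefficient $1+c(t)$ may vanish, so your critical-point analysis for $\tilde g_{i-1}$ alone does not apply to the actual face function. The paper's fix is to define $f_{j-1}:=a_j(r,t)\,r^{k_{j-1}s_{j-1}}\tilde g_{j-1}(u/r^{k_{j-1}},\rme^{2r_{j-1}\rmi t})$, i.e.\ to multiply the next face function by the trailing coefficient of the previous one, so that the shared vertex carries exactly one set of monomials and the faces concatenate into a single boundary polynomial with $f_{P_i}=f_i$. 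Your claim that the non-extreme vertices carry a single monomial $cu^{S_i}v^{a_i}$ is false in this construction; non-degeneracy at the vertices instead follows from semiholomorphy plus the non-vanishing of $a_{i+1}(1,t)$, which holds because the strands of $B_{i+1}$ avoid $u=0$.

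Consequently, the thresholds $N_j$ are not arithmetic in origin. The integrality and slope constraints on the weight vectors are absorbed by choosing the $k_j$ to be sufficiently large even integers; they impose no condition on the $r_j$. The genuine reason $r_{j-1}$ must be large is analytic: after multiplying by $a_j(r,t)$, the nonzero critical values of the face polynomial become $a_j(1,t)\,v_{j-1,\ell}(2r_{j-1}t)$, so the quantity controlling the critical points (Eq.~\eqref{eq:argcrit}) is
\begin{equation*}
\frac{\partial\arg\bigl(a_j(1,t)\bigr)}{\partial t}+2r_{j-1}\frac{\partial\arg\bigl(v_{j-1,\ell}\bigr)}{\partial t}(2r_{j-1}t),
\end{equation*}
and one needs $r_{j-1}$ large enough that the second summand (bounded away from zero by P-fiberedness with $O$-multiplicity $m_{j-1}$ and compactness) dominates the first. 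This is also why $N_{j-1}$ depends on the previously chosen $(r_j,\ldots,r_N)$ and $k_j$: they determine $a_j$. Without this domination argument your proof does not establish that the intermediate face functions are strongly non-degenerate, which is the heart of the theorem.
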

\begin{proof}
Let $g_i(\cdot,\rme^{\rmi t})$ denote the loop in the space of monic polynomials corresponding to the geometric braid $B_i$ and let $\tilde{g}_i(u,\rme^{\rmi t}):=u^{m_i}g_i(u,\rme^{\rmi t})$. Then the zeros of $\tilde{g}_N(u,\rme^{2\rmi t})$ form the union of the geometric braid $B_N^2$ and the 0-strand $\{0\}\times[0,2\pi]$. As in \cite{Bode2019} we define the $u$-convenient, semiholomorphic, radially weighted homogeneous polynomial
\begin{equation}
f_N(u,v,\bar{v})=f_N(u,r\rme^{\rmi t},r\rme^{-\rmi t}):=r^{k_Ns_N}\tilde{g}_N\left(\frac{u}{r^{k_N}},\rme^{2\rmi t}\right)
\end{equation}
with $k_N$ some sufficiently large even integer. Recall that $f_N$ has no critical points in $(\C^*)^2$ if $\arg(\tilde{g}_N)$ is a fibration map.
Since $B_N^2$ is P-fibered with $O$-multiplicity $m_N$, we have a fibration map $\arg(\tilde{g}_N)$, which implies the non-existence of critical points of $f_N$ in $(\C^*)^2$. Let $a_N(r,t)$ denote the coefficient of $u^{m_N}$ in $f_N$. Since the strands of $B_N^2$ do not intersect the 0-strand, $a_N(r,t)$ is never zero if $r>0$. We define
\begin{equation}\label{eq:fn1}
f_{N-1}(u,v,\bar{v})=f_{N-1}(u,r\rme^{\rmi t},r\rme^{-\rmi t}):=a_N(r,t)r^{k_{N-1}s_{N-1}}\tilde{g}_{N-1}\left(\frac{u}{r^{k_{N-1}}},\rme^{2r_{N-1}\rmi t}\right),
\end{equation} 
where $k_{N-1}$ is some large even integer, greater than $k_N$, and for now $r_{N-1}$ is an arbitrary integer. Note that the geometric braid formed by the zeros of $\tilde{g}_{N-1}(u,\rme^{2r_{N-1}}\rmi t)$ is the union of $B_{N-1}^{2r_{N-1}}$ and the 0-strand. We want to show that if $r_{N-1}$ is sufficiently large, then $f_{N-1}$ has no critical points in $(\C^*)^2$. Since $f_{N-1}$ is radially weighted homogeneous and semiholomorphic, this is true if $\arg(a_N(1,t)\tilde{g}_{N-1}(u,\rme^{2r_{N-1}\rmi t}))$ is a fibration map. 

Let $v_{N-1,j}(t)$, $j=1,2,\ldots,s_{N-1}-m_{N-1}-1$, denote the non-zero critical values of $\tilde{g}_{N-1}(\cdot,\rme^{\rmi t})$. Since $B_{N-1}$ is P-fibered with $O$-multiplicity $m_{N-1}$, we have a fibration map $\arg(\tilde{g}_{N-1})$, which is equivalent to $\tfrac{\partial \arg(v_{N-1,j})}{\partial t}(t)\neq 0$ for all $j=1,2,\ldots,s_{N-1}-m_{N-1}-1$ and all $t\in[0,2\pi]$. Then the non-zero critical values of $a_N(1,t)\tilde{g}_{N-1}(u,\rme^{2r_{N-1}\rmi t})$ are $a_N(1,t)v_{N-1,j}(2r_{N-1}t)$ and hence
\begin{equation}
\frac{\partial \arg(a_N(1,t)v_{N-1,j}(2r_{N-1}t))}{\partial t}=\frac{\partial \arg(a_N(1,t))}{\partial t}+2r_{N-1}\frac{\partial \arg(v_{N-1,j})}{\partial t},
\end{equation}
which is non-vanishing as long as $r_{N-1}$ is chosen sufficiently large. Therefore, $\arg(a_N(1,t)\tilde{g}_{N-1}(u,\rme^{2r_{N-1}\rmi t}))$ is a fibration map and $f_{N-1}$ has no critical points in $(\C^*)^2$ if $r_{N-1}$ is large. Note that since we introduced the factor $a_N$ in Eq.~\eqref{eq:fn1}, the coefficient of $u^{s_{N-1}+m_{N-1}}$ (the highest order term) in $f_{N-1}$ is the same as the coefficient of $u^{m_N}=u^{s_{N-1}+m_{N-1}}$ (the lowest order term) in $f_N$. 

We proceed like this inductively, defining $a_{j}(r,t)$ to be the coefficient of $u^{m_j}$ in $f_j$ and
\begin{equation}
f_{j-1}(u,v,\bar{v})=f_{j-1}(u,r\rme^{\rmi t},r\rme^{-\rmi t}):=a_j(r,t)r^{k_{j-1}s_{j-1}}\tilde{g}_{j-1}\left(\frac{u}{r^{k_{j-1}}},\rme^{2r_{j-1}\rmi t}\right),
\end{equation}
where $k_{j-1}$ is chosen greater than $k_j$. As above, we can guarantee that $f_{j-1}$ has no critical points in $(\C^*)^2$ by choosing $r_{j-1}$ sufficiently large. Note however that $a_j(r,t)$ depends on $r_j$ and $k_j$ and the range of values for $r_{j-1}$ that is sufficient depends on $a_j(r,t)$. Likewise, the range of values of $k_j$ that is sufficient depends on the choice of $r_j$. Therefore, we cannot know how to chose $r_{j-1}$ before we have chosen $r_j$ and then $k_j$.

There is one case for which the proof that $f_1$ is strongly non-degenerate differs slightly from the above. Namely, if $s_1=1$, then there are no critical values of $\tilde{g}_1(\cdot,\rme^{\rmi t})$ for any $t\in[0,2\pi]$. In other words, $\tfrac{\partial f_1}{\partial u}$ never vanishes, so that $f_1$ is strongly non-degenerate. Note that in this case there is no restriction on our choice of $r_1\neq 0$.

Since the lowest order term of $f_j$ (with respect to $u$) is equal to the highest order term of $f_{j-1}$, we can combine the different $f_j$s to one semiholomorphic boundary polynomial $f$ with face functions $f_{P_i}=f_i$, where $P_i=(k_i,1)$.

We have already shown that the face functions $f_j$ associated with 1-faces are strongly non-degenerate. Since $f$ is semiholomorphic, all non-extreme vertices are associated with strongly non-degenerate face functions as well. Therefore $f$ satisfies Condition (ii) in Definition~\ref{def-strong}. 

Thus in order to prove that $f$ is strongly inner non-degenerate we only need to check that $f_{P_1}=f_1$ has no critical points in $\C^2\backslash\{(u,0):u\in\C\}$ and $f_{P_N}=f_N$ has no critical points in $\C^2\backslash\{(0,v):v\in\C\}$. Since we already know that $f_1$ and $f_N$ have no critical points in $(\C^*)^2$ we only need to check points in $\{(0,v):v\in\C^*\}$ and $\{(u,0):u\in\C^*\}$, respectively.

By construction the only term of $f_N$ that does not depend on $v$ or $\bar{v}$ is $u^{\sum_{i=1}^N s_i}$, so that $(f_N)_u(u,0,0)=\left(\sum_{i=1}^N s_i\right) u^{\left(\sum_{i=1}^N s_i\right)-1}$, whose only root is $u=0$. Hence $f_N$ satisfies Condition (i) in Definition~\ref{def-strong}.

Since by assumption $B_1$ has $O$-multiplicity $m_1=0$, we have that $f_1$ is $v$-convenient.

Recall that, since $f_1$ is radially weighted homogeneous, semiholomorphic and $v$-convenient, its critical points in $\C\times\C^*$ correspond to critical points of $\arg(a_2(1,t)\tilde{g}_1(u,\rme^{2r_1\rmi t}))$. However, $r_1$ is chosen such that $\arg(a_2(1,t)\tilde{g}_1(u,\rme^{2r_1\rmi t}))$ is a fibration, so $f_1$ satisfies Condition (i) in Definition~\ref{def-strong} and $f$ is strongly inner non-degenerate.

%which are equivalent to solutions of $(\tilde{g}_1)_u=\tfrac{\partial \arg(a_1(1,t)\tilde{g}_1)}{\partial t}=0$. By construction $\arg(a_1(1,t)\tilde{g}_1)$ is a fibration map, so there are no critical points. Therefore $f_1$ satisfies Condition(i) in Definition~\ref{def-strong} and $f$ is strongly inner non-degenerate. 
 
It follows from Proposition~\ref{strong-isolated} that $f$ has an isolated singularity. Since strong inner non-degeneracy implies inner non-degeneracy, we know from Proposition~\ref{prop:main} that the link of its singularity is given by the closure of the braid $B(B_1^{2r_1},B_2^{2r_2},\ldots,B_N^{2r_N})$.
\end{proof}

\bibliographystyle{amsplain}%Used BibTeX style is unsrt %amsalpha
  \renewcommand{\refname}{ \large R\normalsize  EFERENCES}
\bibliography{sample}
\Addresses
\end{document}